\theoremstyle{plain}
\newtheorem{theorem}{Theorem}[section]
\newtheorem{lemma}[theorem]{Lemma}
\newtheorem{proposition}[theorem]{Proposition}
\newtheorem{corollary}[theorem]{Corollary}
\theoremstyle{definition}
\newtheorem{example}[theorem]{Example}
\theoremstyle{remark}
\newtheorem{remark}{Remark}
\begin{document}

\afterpage{\rhead[]{\thepage} \chead[\small W. A. Dudek, R. S. Gigo\'n]{\small  Completely inverse $AG^{**}$-groupoids} \lhead[\thepage]{} }

\begin{center}
\vspace*{2pt}
{\large \textbf{\medskip Completely inverse $AG^{**}$-groupoids}}\\[3mm]
{\large \textsf{\emph{Wies{\l}aw A. Dudek, \ Roman S. Gigo\'n}}}
\\[30pt]
\end{center}
\textbf{Abstract} A completely inverse $AG^{**}$-groupoid is a groupoid satisfying the identities $(xy)z=(zy)x$, $x(yz)=y(xz)$ and $xx^{-1}=x^{-1}x$, where $x^{-1}$ is a unique inverse of $x$, that is, $x=(xx^{-1})x$ and $x^{-1}=(x^{-1}x)x^{-1}$. First we study some fundamental properties of such groupoids. Then we determine certain fundamental congruences on a completely inverse $AG^{**}$-groupoid; namely: the maximum idempotent-separating congruence, the least $AG$-group congruence and the least $E$-unitary congruence. Finally, we investigate the complete lattice of congruences of a completely inverse $AG^{**}$-groupoids.~In particular, we describe congruences on completely inverse $AG^{**}$-groupoids by their kernel and trace.\\ \\
{\footnotesize\textsf{2010 Mathematics Subject Classification: 20N02, 06B10 \\
Keywords:} completely inverse $AG^{**}$-groupoid, $AG$-group, semilattice of $AG$-groups, trace of congruence, kernel of congruence, $AG$-group congruence, $E$-unitary congruence, idempotent-separating congruence, idempotent pure congruence, fundamental congruence.}\\
Communicated by M. V. Volkov\\

\section{Introduction}

By an \emph{Abel-Grassmann's groupoid} (briefly an \emph{$AG$-groupoid}) we shall mean any groupoid which satisfies the identity $(xy)z=(zy)x$.~Such a groupoid is also called a \emph{left almost semi\-group} (briefly an \emph{$LA$-semigroup}) or a \emph{left invertive groupoid} or a \emph{right modular groupoid} (cf.\,\cite{Hol, KN, MIq}).
This structure is closely related to a commutative semigroup, because if an $AG$-groupoid contains
a right identity, then it becomes a commutative monoid. Also, if an $AG$-groupoid $A$ with a left zero $z$ is finite, then (under certain conditions) $A\setminus\{z\}$ is a commutative group \cite{MK}.

The name \emph{Abel-Grassmann's groupoids} was suggested by Stojan Bogdanovi\'c on a seminar in Ni\v{s}. First time this name appeared in the paper \cite{PS1} and in the book \cite{DK}.
 
An $AG$-groupoid $A$ satisfying the identity $x(yz)=y(xz)$ is called an \emph{$AG^{**}$-groupoid}.
Such groupoids were studied by many authors. For example, in \cite{MB} it has been proved that an $AG^{**}$-groupoid containing a left cancellative $AG^{**}$-subgroupoid can be embedded in a commutative monoid whose cancellative elements form a commutative group whose identity coincides with the identity of the commutative monoid. Also, each $AG^{**}$-groupoid satisfying the identity $(xx)x=x(xx)$ can be uniquely expressed as a semilattice of certain Archimedean $AG^{**}$-groupoids \cite{MK2}. Some other decompositions of certain $AG^{**}$-groupoids are given in \cite{P, PS}. Further, certain fundamental congruences on $AG^{**}$-groupoids are described in \cite{MK1, PB}. Finally, the kernel normal system of an inversive $AG^{**}$-groupoid has been studied in \cite{BPS}.

In this paper we investigate \emph{completely inverse $AG^{**}$-groupoids}, i.e., $AG^{**}$-groupoids
in which every element $a$ has a unique inverse $a^{-1}$ such that $aa^{-1}=a^{-1}a$.
In Section $2$ we establish some necessary definitions and facts concerning $AG^{**}$-groupoids. In Section \nolinebreak $3$ we give a few interesting results about completely inverse $AG^{**}$-groupoids. Recall from \cite{DG} that any completely inverse $AG^{**}$-groupoid satisfies Lallement's lemma for regular semigroups. Using this fact, we describe the maximum idempotent-separating congruence $\mu$ (which is equal to the least semilattice congruence) on a completely inverse $AG^{**}$-groupoid $A$.~In particular, $A$ is a semilattice $E_A$ of $AG$-groups $e\mu$  ($e\in E_A$).~Also, we show that the interval $[1_A,\mu]$ is a modular lattice.~The main result of this section says that any $AG$-groupoid $A$ is a completely inverse $AG^{**}$-groupoid if and only if $A$ is a strong semilattice of $AG$-groups.~On the one hand, in the light of this fact, we are able to construct completely inverse $AG^{**}$-groupoids.~On the other hand, completely inverse $AG^{**}$-groupoids are very similar to Clifford semigroups (i.e., (strong) semilattices of groups).

At the beginning of Section $4$ we prove that any congruence $\rho$ on a completely inverse  $AG^{**}$-groupoid is uniquely determined by $(i)$ its kernel and trace; $(ii)$ \nolinebreak the set of $\rho$-classes containing idempotents.~Furthermore, we determine the least $AG$-group congruence $\sigma$ and describe all $AG$-group congruences in terms of their kernels. Also, we give some equivalent conditions for a completely inverse $AG^{**}$-groupoid $A$ to be $E$-unitary and we describe all $E$-unitary congruences on $A$.

In Section $5$ we characterize abstractly congruences on an arbitrary completely inverse $AG^{**}$-groupoid $A$ via the so-called congruences pairs for $A$.~Furthermore, we study the trace classes of the complete lattice $\mathcal{C}(A)$ of all congruences on $A$.~The main result of this section says that the map $\rho\to\text{tr}(\rho)$ ($\rho\in\mathcal{C}(A)$) is a complete lattice homomorphism of $\mathcal{C}(A)$ onto the lattice of all congruences on the semilattice $E_A$. Also, if $\theta$ denotes the congruence on $\mathcal{C}(A)$ induced by this map, then for every $\rho\in\mathcal{C}(A)$, $\rho\theta$ is a modular lattice (with commutating elements). Moreover, $\rho\theta=[\rho_\theta,\mu(\rho)]$.~If in addition, $A$ is $E$-unitary, then $\rho_\theta=\rho\cap\sigma$, and  the mapping $\rho\to\rho\cap\sigma$ ($\rho\in\mathcal{C}(A)$) is a complete lattice homomorphism of $\mathcal{C}(A)$ onto the lattice of idempotent pure congruences.~Finally, we investigate the lattice $\mathcal{FC}(A)$ of all fundamental congruences on $A$.~We prove that $\mathcal{FC}(A)=\{\mu(\rho):\rho\in\mathcal{C}(A)\}\cong\mathcal{C}(E_A)$.

In Section $6$ we show first that each completely inverse $AG^{**}$-groupoid $A$ possesses a largest idempotent pure congruence $\tau$.~Also, we study the kernel classes of $\mathcal{C}(A)$.~We prove a result analogous to a result from the previous section. In particular, we show that the interval $[\rho\cap\mu,\tau(\rho)]$ consist of all congruences on $A$ such that their kernels are equal to $\ker(\rho)$. Further, we go back to study $E$-unitary congruences.~We determine all $E$-unitary congruences on $A$; that is, we show that a congruence is $E$-unitary if and only if its kernel is equal to the kernel of some $AG$-group congruence on $A$.~Finally, we give once again necessary and sufficient conditions for a completely inverse  $AG^{**}$-groupoid to be $E$-unitary.

The terminology used in this paper coincides with semigroup terminology  (see the book \cite{Pet}).
\section{Preliminaries}

One can easily check that in an arbitrary $AG$-groupoid $A$, the \emph{medial law} is valid, that \nolinebreak is, the equality
\begin{eqnarray}\label{medial}
(ab)(cd)=(ac)(bd)
\end{eqnarray}
holds for all $a,b,c,d\in A$.

Recall from \cite{PS} that an \emph{$AG$-band} $A$ is an $AG$-groupoid satisfying the identity $x^2=x$. If in addition, $ab=ba$ for all $a,b\in A$, then we say that $A$ is an \emph{$AG$-semilattice}.

Let $A$ be an $AG$-groupoid and $B\subseteq A$.~Denote the set of all idempotents of $B$ by $E_B$, that is, $E_B=\{b\in B:b^2=b\}$.~From $(\ref{medial})$ it follows that if $E_A \neq\emptyset$, then $E_A E_A\subseteq E_A$, therefore, $E_A$ is an $AG$-band.

An $AG$-groupoid satisfying the identity $x(yz)=y(xz)$
is said to be an \emph{$AG^{**}$-groupoid}. Any $AG^{**}$-groupoid is {\em paramedial}, i.e., it satisfies the identity
\begin{eqnarray}\label{paramedial}
(wx)(yz)=(zx)(yw).
\end{eqnarray}
Notice that each $AG$-groupoid with a left identity is an $AG^{**}$-groupoid.~Further, observe that if $A$ is an $AG^{**}$-groupoid, then (\ref{paramedial}) implies that if $E_A\neq\emptyset$, then $E_A$ is an $AG$-semi\-lattice. Indeed, in this case $E_A$ is an $AG$-band and $ef=(ee)(ff)=(fe)(fe)=fe$ for all $e,f\in E_A$. Moreover for $a,b\in A$ and $e\in E_A$, using $(1)$ and $(2)$ we have
$$
e(ab)=(ee)(ab)=(ea)(eb)=(ba)e=(ea)b.
$$

We have just proved the following result (its second part was proved earlier in \cite{PB}).

\begin{proposition}\label{semilattice} Let $A$ be an $AG^{**}$-groupoid.~Then
\begin{equation}\label{e*}
e\cdot ab=ea\cdot b
\end{equation}
for all $a,b\in A$ and $e\in E_A$.

In particular, the set of all idempotents of an arbitrary $AG^{**}$-groupoid is either empty or a semilattice.
\end{proposition}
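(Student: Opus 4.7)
The plan is to prove the identity (\ref{e*}) first by a short chain of manipulations using the three identities at our disposal, namely the defining left invertive law $(xy)z=(zy)x$, the medial law (\ref{medial}), and the paramedial law (\ref{paramedial}), together with the idempotency $e^2=e$. The semilattice structure on $E_A$ will then come out as a by-product of the same techniques.

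For the first assertion, I would start with $e(ab)$ and rewrite $e$ as $ee$, obtaining $e(ab)=(ee)(ab)$. A single application of the medial law (\ref{medial}) turns this into $(ea)(eb)$. Now I would use the paramedial law (\ref{paramedial}) with the assignment $w=e,\,x=a,\,y=e,\,z=b$, which collapses $(ea)(eb)$ to $(ba)(ee)=(ba)e$. Finally, the left invertive identity $(xy)z=(zy)x$ applied to $(ba)e$ (with $x=b,y=a,z=e$) yields $(ea)b$, completing the proof of (\ref{e*}).

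For the statement that $E_A$ is a semilattice when non-empty, I would verify closure and commutativity separately. Closure: given $e,f\in E_A$, the medial law gives $(ef)(ef)=(ee)(ff)=ef$, so $ef\in E_A$, i.e., $E_A$ is an $AG$-band. Commutativity: applying the paramedial law (\ref{paramedial}) to $(ee)(ff)$ yields $(fe)(fe)$, and since $fe\in E_A$ by the closure step, $(fe)(fe)=fe$; hence $ef=(ee)(ff)=(fe)(fe)=fe$. A commutative $AG$-band is precisely a semilattice (associativity follows from commutativity combined with the left invertive law: $(ab)c=(cb)a=a(bc)$ once $ab=ba$ holds generally).

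There is no real obstacle here; the proof is essentially an exercise in picking the right substitutions for $w,x,y,z$ in the paramedial law and $a,b,c,d$ in the medial law. The one small subtlety is making sure that in the commutativity argument, the step $(fe)(fe)=fe$ is justified before it is used, which is why closure has to be established first. Everything follows from the three identities and $e^2=e$, with no appeal to inverses or to the completely inverse hypothesis that will be introduced later.
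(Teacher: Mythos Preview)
Your proof is correct and follows essentially the same route as the paper: the identity (\ref{e*}) is obtained via the chain $e(ab)=(ee)(ab)=(ea)(eb)=(ba)e=(ea)b$ using idempotency, the medial law, the paramedial law, and the left invertive law in that order, and the semilattice claim is handled by the same closure-then-commutativity argument $ef=(ee)(ff)=(fe)(fe)=fe$. Your added remark that associativity of $E_A$ follows from commutativity together with the left invertive law is a nice point the paper leaves implicit.
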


We say that an $AG^{**}$-groupoid $A$ is \emph{completely regular} if for every $a\in A$ there exists $x\in A$ such that $a=(ax)a$ and $ax=xa$.~Observe that in such a case,
$$(ax)(ax)=(aa)(xx)=x(aa\cdot x)=x(xa\cdot a)=x(ax\cdot a)=xa=ax\in E_A,$$ therefore, $E_A$ forms a semilattice.

Let $A$ be an $AG$-groupoid with a left identity $e$ and $a\in A$. An element $a^{*}$ of $A$ is said to be a \emph{left} (\emph{right}) \emph{inverse} of $a$ if $a^{*}a=e$ (resp. $aa^{*}=e$), and an element of $A$ which is both a left and right inverse of $a$ is called an \emph{inverse} of $a$. Let $a^{*}$ be a left inverse of $a$. Then $aa^{*}=(ea)a^{*}=(a^{*}a)e=e$. It follows that any left inverse $a^{*}$ of $a$ is also its right inverse, therefore, it is its inverse. In particular, if $a^{**}$ is another left inverse of $a$, then $a^{*}=(a^{*}a)a^{*}=(a^{**}a)a^{*}=(a^{*}a)a^{**}=(a^{**}a)a^{**}=a^{**}$. The conclusion is that each left inverse of $a$ is its unique inverse. Further, if $f$ is a left identity of $A$, then $fe=e=ee$, so $e=f$, i.e., $e$ is a unique left identity of $A$. Dually, any right inverse of $a$ is its unique inverse. Denote as usual the inverse of $a$ by $a^{-1}$. Finally, it is clear that $a=(a^{-1})^{-1}$, $(ab)^{-1}=a^{-1}b^{-1}$.

An $AG$-groupoid with a left identity in which every element has a left inverse is called an \emph{$AG$-group}.

\begin{proposition}\label{AG-groups} Let $A$ be an $AG$-groupoid with a left identity $e$.~Then the following conditions are equivalent$:$

$(a)$ $A$ is an $AG$-group$;$

$(b)$ every element of $A$ has a right inverse$;$

$(c)$ every element $a$ of $A$ has a unique inverse $a^{-1};$

$(d)$ the equation $xa = b$ has a unique solution for all $a,b\in A$.
\end{proposition}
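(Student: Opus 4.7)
My first observation is that the equivalences $(a)\Leftrightarrow(b)\Leftrightarrow(c)$ are essentially already in hand from the discussion immediately preceding the proposition: every left inverse was shown there to be automatically a two-sided inverse and to be unique, and dually for right inverses. Condition $(c)$ in turn trivially implies both $(a)$ and $(b)$. Thus the only genuine work is to connect $(d)$ with the other three conditions, and I would do this by closing the cycle with $(a)\Rightarrow(d)\Rightarrow(a)$.

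For $(a)\Rightarrow(d)$, the natural guess for a solution of $xa=b$ is $x=ba^{-1}$. I would verify this by direct computation, applying the $AG$-identity $(xy)z=(zy)x$ with the roles $x\leftarrow b,\ y\leftarrow a^{-1},\ z\leftarrow a$, which gives
$$(ba^{-1})a=(aa^{-1})b=eb=b.$$
For uniqueness, if $x_1a=x_2a$, then right-multiplying by $a^{-1}$ and again invoking $(xy)z=(zy)x$ yields $(a^{-1}a)x_i=ex_i=x_i$ on each side, so $x_1=x_2$.

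For $(d)\Rightarrow(a)$, I would simply specialise $(d)$ to $b=e$: the equation $xa=e$ then has a (unique) solution, which is by definition a left inverse of $a$, so $A$ is an $AG$-group.

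No step looks like a real obstacle; the one point that needs care is keeping track of how the $AG$-identity reorders the three factors at each use, since the operation is neither associative nor commutative, so that one must be sure the outer factors swap in the intended direction.
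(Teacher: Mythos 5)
Your proposal is correct and follows essentially the same route as the paper: the equivalence of $(a)$, $(b)$, $(c)$ is taken from the preceding discussion, the solution $x=ba^{-1}$ of $xa=b$ is verified by the same application of the left invertive law $(ba^{-1})a=(aa^{-1})b=eb=b$, and uniqueness is obtained by the same cancellation $(x_ia)a^{-1}=(a^{-1}a)x_i=x_i$ (the paper merely arranges the implications as a single cycle $(a)\Rightarrow(b)\Rightarrow(c)\Rightarrow(d)\Rightarrow(a)$ rather than your $(a)\Rightarrow(d)\Rightarrow(a)$).
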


\begin{proof} By above $(a)\implies (b)\implies (c)$.

$(c)\implies (d)$.                                     
Let $a,b\in A$. Then $b=eb=(aa^{-1})b=(ba^{-1})a$, i.e., $ba^{-1}$ is a solution of the equation $xa = b$. Also, if $c$ and $d$ are solutions of this equation, then
$$c=ec=(a^{-1}a)c=(ca)a^{-1}=(da)a^{-1}=d.$$

$(d)\implies (a)$. This is obvious.
\end{proof}

Notice that if $g$ is an arbitrary idempotent of an $AG$-group $A$ with a left identity $e$, then $gg=g=eg$. Hence $e=g$, therefore, $E_A=\{e\}$.

Denote by $V(a)$ the set of all \emph{inverses} of $a$, that is,
$$
V(a)=\{a^{*}\in A:a=(aa^{*})a, \ \ a^{*}=(a^{*}a)a^{*}\}.
$$
An $AG$-groupoid $A$ is called \emph{regular} (in \cite{BPS} it is called \emph{inverse}) if $V(a)\neq\emptyset$ for all $a\in A$. Note that $AG$-groups are of course regular $AG$-groupoids, but the class of all regular $AG$-groupoids is vastly more extensive than the class of all $AG$-groups.~For example, every $AG$-band $A$ is evidently regular, since $a=(aa)a$ for every $a\in A$. In \cite{BPS} it has been proved that in any regular $AG^{**}$-groupoid, $|V(a)|=1$ $(a\in A)$, therefore, we call it an \emph{inverse $AG^{**}$-groupoid}. In that case, we denote a unique inverse of $a\in A$ by $a^{-1}$. Furthermore, recall from \cite{BPS} that in any regular $AG$-groupoid $A$, $V(a)V(b)\subseteq V(ab)$ for all $a,b\in A$.~Indeed, let $a^{*}\in V(a)$ and $b^{*}\in V(b)$. Then
$$(ab)(a^{*}b^{*})\cdot ab=(ab)a\cdot (a^{*}b^{*})b=(ab)a\cdot (bb^{*})a^{*}=(ab)(bb^{*})\cdot aa^{*}=(bb^{*}\cdot b)a\cdot aa^{*},$$
so
$$(ab)(a^{*}b^{*})\cdot ab=(ba)(aa^{*})=(aa^{*}\cdot a)b=ab.$$
By symmetry, $a^{*}b^{*}=(a^{*}b^{*})(ab)\cdot (a^{*}b^{*})$, as exactly required.~Finally, there are regular $AG$-groupoids without idempotents. On the other hand, if $a^{*}\in V(a)$ and $aa^{*}=a^{*}a$ in the $AG$-groupoid $A$, then $aa^{*}\in E_A$ (cf.\,\cite{BPS}).

\section{Completely inverse $AG^{**}$-groupoids}

One can prove (cf.\,\cite{BPS}) that in an inverse $AG^{**}$-groupoid $A$, $aa^{-1}=a^{-1}a$ if and only if $aa^{-1},a^{-1}a\in E_A$. Also, in \cite{BPS} the authors studied congruences on inverse $AG^{**}$-groupoids satisfying the identity  $xx^{-1}=x^{-1}x$. We will call such groupoids \emph{completely inverse $AG^{**}$-groupoids}. Each $AG$-group is a completely inverse $AG^{**}$-groupoid.

\begin{example}\label{E1} Let $A$ be a commutative inverse semigroup. Put $a\cdot b=a^{-1}b$ for all $a,b\in A$, where $a^{-1}$ is a unique inverse of $a$ in the inverse semigroup $A$.~Then it is easy to check that $(A,\cdot)$ is an $AG^{**}$-groupoid and $E_{(A,\cdot)}=E_A$. Furthermore, $(a\cdot a)\cdot a=a$, so $a$ is its own unique inverse in $(A,\cdot)$ for every $a\in A$, so $a\cdot a\in E_{(A,\cdot)}$ for all $a\in A$ and $(A,\cdot)$ is a completely inverse $AG^{**}$-groupoid. Also, we have that $a^{-1}\cdot (a\cdot b)= a^{-1}\cdot a^{-1}b=aa^{-1}b$. Hence
$$
a^{-1}\cdot (a^{-1}\cdot (a\cdot b))=a^{-1}\cdot aa^{-1}b=aaa^{-1}b=aa^{-1}ab=ab,
$$
that is,
$$
ab=a^{-1}\cdot (a^{-1}\cdot (a\cdot b))=a\cdot (a^{-1}\cdot (a^{-1}\cdot b))
$$
for all $a,b\in A$.

Let $\rho$ be a congruence on $(A,\cdot)$. From the above equalities follows easily that $\rho$ is a congruence on the commutative inverse semigroup $A$.~Also, if $(a,a\cdot a)\in\rho$ in $(A,\cdot)$, then $(a,a^{-1}a)\in\rho$ in $A$, since $a\cdot a=a^{-1}a$. Thus $(a^2,aa^{-1}a)\in\rho$ in $A$, so $(a^2,a)\in\rho$ in $A$. Lallement's Lemma implies that there exists $e\in E_A\cap a\rho$ and so $e\in E_{(A,\cdot)}\cap a\rho$. On the other hand, trivially $a\cdot a\in E_{a\rho}$ in $(A,\cdot)$.

Conversely, one can easily see that if $\rho$ is a congruence on $A$, then $\rho$ is also a congruence on $(A,\cdot)$. Further, if $(a,a^2)\in\rho$ in $A$, then $(a,e)\in\rho$ in $A$ for some $e\in E_A$. Since $e\cdot e=e$, then $(a,a\cdot a)\in\rho$ in $(A,\cdot)$.
\end{example}

A groupoid $A$ is said to be \emph{idempotent-surjective} if for each congruence $\rho$ on $A$, every idempotent $\rho$-class contains an idempotent of $A$. 

The following theorem was proved in \cite{DG}. Now we give another proof.

\begin{theorem}\label{i-s} Completely inverse $AG^{**}$-groupoids are idempotent-surjective.
\end{theorem}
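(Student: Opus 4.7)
The plan is to work directly with the defining identities, without invoking the semilattice decomposition developed later. First I would unpack the statement: a class $a\rho$ is an idempotent of the quotient $A/\rho$ precisely when $(a\rho)(a\rho)=a\rho$, i.e.\ when $a^2\,\rho\,a$. So the task reduces to showing that whenever $a^2\,\rho\,a$ in a completely inverse $AG^{**}$-groupoid $A$, there is an idempotent $e\in E_A$ with $e\,\rho\,a$.

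The natural candidate is $e=aa^{-1}$, which is known to lie in $E_A$ (in a completely inverse $AG^{**}$-groupoid one has $aa^{-1}=a^{-1}a\in E_A$, as already recorded in the preliminaries). To place this candidate in the class $a\rho$, I would multiply the relation $a\,\rho\,a^2$ on the right by $a^{-1}$ (legitimate because $\rho$ is a congruence), obtaining $aa^{-1}\,\rho\,a^2a^{-1}$, and then reduce $a^2a^{-1}$ to $a$ by a short calculation valid in every completely inverse $AG^{**}$-groupoid.

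That calculation is the only real computation: by the left invertive law $(xy)z=(zy)x$ we have
\[
a^2 a^{-1}=(aa)a^{-1}=(a^{-1}a)a,
\]
and since $A$ is completely inverse, $a^{-1}a=aa^{-1}$, so $(a^{-1}a)a=(aa^{-1})a=a$ by the defining property of $a^{-1}$. Stringing these together gives $a^2a^{-1}=a$, hence $aa^{-1}\,\rho\,a$, and $aa^{-1}$ is the required idempotent in the class.

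The main obstacle, such as it is, lies in spotting the correct bracketing: one must resist any temptation to ``cancel'' associatively and instead use the left invertive law to reassociate $(aa)a^{-1}$ into $(a^{-1}a)a$ before applying $a^{-1}a=aa^{-1}$. Once this single identity $a^2a^{-1}=a$ is in hand, no further structural machinery (paramediality, Proposition~\ref{semilattice}, the $AG$-semilattice $E_A$, etc.) is needed, and the argument is essentially one line.
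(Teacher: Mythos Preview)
Your argument is correct, and it is genuinely shorter than the paper's. The paper does not use $aa^{-1}$ as the idempotent; instead it works with the inverse $x$ of $a^{2}$, sets $e=a(xa)$, first derives the auxiliary identity $a^{2}=a^{2}(xa^{2})$ to see that $e\,\rho\,a^{2}\,\rho\,a$, and then spends several lines (using mediality, paramediality and Proposition~\ref{semilattice}) to verify $e^{2}=e$. Your route bypasses all of that: since $aa^{-1}$ is already known to be idempotent, the only thing to check is $a^{2}a^{-1}=a$, which falls out of the left invertive law and the hypothesis $a^{-1}a=aa^{-1}$ in one step. The paper's choice to invert $a^{2}$ rather than $a$ may be a vestige of arguments that work under weaker hypotheses (e.g.\ completely regular rather than completely inverse), but for the theorem as stated your direct use of $a^{-1}$ is cleaner and requires none of the identities from Proposition~\ref{semilattice}.
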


\begin{proof} Let $\rho$ be a congruence on a completely inversive $AG^{**}$-groupoid $A$, $a\in A$ and $a\,\rho\,a^2$. We know that there exists an element $x\in A$ such that $a^2=(a^2x)a^2$, $x=(xa^2)x$ and $a^2x=xa^2\in E_A$. Note that
$$
(a^2x)(aa)=a(a^2x\cdot a)=a(xa^2\cdot a)=a(aa^2\cdot x)=(aa^2)(ax)=a^2(a^2x)=a^2(xa^2),
$$
that is, $a^2=a^2(xa^2)$. Put $e=a(xa)$. Then $e~\rho~a^2(xa^2)=a^2~\rho~a$. Also,
$$e^2=(a\cdot xa)(a\cdot xa)=a\cdot (a\cdot xa)(xa)=a\cdot (ax)(xa\cdot a)=a\cdot (ax)(a^2x).$$
Furthermore, using $(2)$
$$
(ax)(a^2x)=(ax)(xa^2)=(a^2x)(xa)=(xa^2)(xa)=(xa^2\cdot x)a
$$
by (\ref{e*}), since $xa^2\in E_A$. Hence $(ax)(a^2x)=xa$. Consequently,
$$
e^2=a(xa)=e\in E_A,
$$
as required.
\end{proof}

Let $\rho$ be a congruence on a completely inverse $AG^{**}$-groupoid $A$ and $a,b\in A$. It is evident that $(a\rho)^{-1}=a^{-1}\rho$. Hence if $(a,b)\in\rho$, then $(a^{-1},b^{-1})\in\rho$. Moreover, $A/\rho$ is a completely inverse $AG^{**}$-groupoid.

Further, let $A$ be an arbitrary groupoid and $\mathcal{V}$ be a fixed class of groupoids. We say that a congruence $\rho$ on $A$ is a \emph{$\mathcal{V}$-congruence} if $A/\rho\in\mathcal{V}$. For example, if $\mathcal{V}$ is the class of all semilattices, then $\rho$ is a \emph{semilattice} congruence on $A$ if $A/\rho$ is a semilattice.
Moreover, $A$ is called a \emph{semilattice $A/\rho$ of $AG$-groups} if there is a semilattice congruence $\rho$ on $A$ such that every $\rho$-class is an $AG$-group.~In that case, $A$ is a \emph{semilattice $Y=A/\rho$ of $AG$-groups $A_\alpha$}, $\alpha\in Y$, where $A_\alpha$ are the $\rho$-classes of $A$, or briefly a \emph{semilattice $Y=A/\rho$ of $AG$-groups $A_\alpha$}.~Notice that in such a case, $A_\alpha A_\beta\subseteq A_{\alpha\beta}$, where $\alpha\beta$ is the product of $\alpha$ and $\beta$ in $Y$. Also, $A_{\alpha\beta}=A_{\beta\alpha}$ and $A_{(\alpha\beta)\gamma}=A_{\alpha(\beta\gamma)}$.

Finally, we say that a congruence $\rho$ on a groupoid $A$ is \emph{idempotent-separating} if every $\rho$-class contains at most one idempotent of $A$.

The following simple result will at times be useful.

\begin{lemma}\label{unipotent} A completely inverse $AG^{**}$-groupoid containing only one idempotent is an $AG$-group.
\end{lemma}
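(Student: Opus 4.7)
The plan is to show directly that the unique idempotent $e$ serves as a left identity and that each $a \in A$ has $a^{-1}$ as a left inverse, which immediately gives that $A$ is an $AG$-group by definition.

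First I would unpack the definition of a completely inverse $AG^{**}$-groupoid: every $a \in A$ has a unique inverse $a^{-1}$ satisfying $a = (aa^{-1})a$, $a^{-1} = (a^{-1}a)a^{-1}$, and crucially $aa^{-1} = a^{-1}a$. Then I would invoke the fact recalled at the end of Section $2$ (from \cite{BPS}): if $a^* \in V(a)$ and $aa^* = a^*a$, then $aa^* \in E_A$. Applying this with $a^* = a^{-1}$ yields $aa^{-1} \in E_A$ for every $a \in A$.

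Now suppose $E_A = \{e\}$. Then for every $a \in A$, we must have $aa^{-1} = a^{-1}a = e$. Substituting into the identity $a = (aa^{-1})a$ gives $a = ea$, so $e$ is a left identity of $A$. Moreover, $a^{-1}a = e$ shows that $a^{-1}$ is a left inverse of $a$ (with respect to the left identity $e$). Hence $A$ is an $AG$-groupoid with a left identity in which every element has a left inverse, i.e., an $AG$-group.

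There is no real obstacle here; the lemma is essentially immediate once one combines the two facts recalled in Section $2$ (that $aa^{-1}$ is automatically idempotent in the completely inverse setting, and the definition of $AG$-group). The only thing to be careful about is to cite the right source for $aa^{-1} \in E_A$ and to observe that the defining equation $a=(aa^{-1})a$ gives the left identity property for free.
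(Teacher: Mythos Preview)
Your proof is correct and follows essentially the same approach as the paper: from $E_A=\{e\}$ conclude $aa^{-1}=a^{-1}a=e$, then $ea=(aa^{-1})a=a$ gives that $e$ is a left identity, so $A$ is an $AG$-group. The paper's version is simply more terse, omitting the explicit mention of the left-inverse condition (which is immediate from $a^{-1}a=e$).
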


\begin{proof} Let $E_A=\{e\},a\in A$. Then $e=aa^{-1}=a^{-1}a$. Hence $ea=(aa^{-1})a=a$. Thus $A$ is an $AG$-group.
\end{proof}

For elementary facts about (inverse) semigroups the reader is referred to the book of Petrich \cite{Pet}.~It is well known that each completely regular inverse semigroup is a semilattice of groups. We prove now an analogous result.

\begin{theorem}\label{mu} Let $A$ be a completely inverse $AG^{**}$-groupoid.~Define on $A$ the relation $\mu$ by
$$(a,b)\in\mu\iff aa^{-1}=bb^{-1}$$
for all $a,b\in A$. Then$:$

$(a)$ \ $\mu$ is the least semilattice congruence on $A;$

$(b)$ \ every $\mu$-class is an $AG$-group$;$

$(c)$ \ $\mu$ is the maximum idempotent-separating congruence on $A;$

$(d)$ \ $A$ is a semilattice $A/\mu$ of $AG$-groups$;$

$(e)$ \  $E_A\cong A/\mu$.

\noindent
Hence $A$ is a semilattice $E_A$ of $AG$-groups $G_e$ $(e\in E_A)$, where $G_e=\{a\in A:aa^{-1}=e\}$.
\end{theorem}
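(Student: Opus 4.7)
The plan is to verify the statements (a)--(e) essentially in the order listed, exploiting that every $\mu$-class of the form $\{a:aa^{-1}=e\}$ is naturally ``anchored'' at the idempotent $e\in E_A$, and that $E_A$ is a semilattice by Proposition \ref{semilattice}.

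First I would check that $\mu$ is a congruence. Reflexivity, symmetry and transitivity are immediate from the definition; for the compatibility, given $aa^{-1}=bb^{-1}$ and any $c\in A$, I would use $(ac)^{-1}=a^{-1}c^{-1}$ together with the medial law \eqref{medial} to rewrite
\[
(ac)(ac)^{-1}=(ac)(a^{-1}c^{-1})=(aa^{-1})(cc^{-1}),
\]
and symmetrically for $(bc)(bc)^{-1}$, so $(ac,bc)\in\mu$; the left compatibility is analogous. To see that $A/\mu$ is a semilattice, I would show that $a^{2}\,\mu\,a$ via $(a^{2})(a^{2})^{-1}=(aa^{-1})(aa^{-1})=aa^{-1}$ (using that $aa^{-1}\in E_A$), and that $ab\,\mu\,ba$ via the medial law and the commutativity of $E_A$, namely $(ab)(ab)^{-1}=(aa^{-1})(bb^{-1})=(bb^{-1})(aa^{-1})=(ba)(ba)^{-1}$. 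This yields (a) up to minimality.

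For minimality of $\mu$, let $\rho$ be any semilattice congruence on $A$. Since $a=(aa^{-1})a$, in $A/\rho$ I would write $a\rho=(aa^{-1})\rho\cdot a\rho$, and because $a\rho$ is an idempotent of the semilattice $A/\rho$, absorbing the factor $a\rho$ in $(aa^{-1})\rho\cdot a\rho$ (using commutativity and idempotency in $A/\rho$) gives $a\rho=(aa^{-1})\rho$. Similarly $b\rho=(bb^{-1})\rho$, whence $aa^{-1}=bb^{-1}$ forces $a\rho=b\rho$; so $\mu\subseteq\rho$. For (b), for each $e\in E_A$ the class $G_e=\{a:aa^{-1}=e\}$ is closed under the operation by the medial computation above (with $c$ replaced by $b\in G_e$), contains $e$ as a left identity since $ea=(aa^{-1})a=a$ for $a\in G_e$, and has $a^{-1}$ as a left inverse of $a$ because $a^{-1}a=aa^{-1}=e$; thus $G_e$ is an $AG$-group. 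Statement (d) is then just (a) combined with (b).

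For (c), any idempotent $e\in E_A$ is its own inverse (since $(ee)e=e$ and $e=(ee)e$), so for $e,f\in E_A$ the relation $e\,\mu\,f$ becomes $e=ee=ff=f$, i.e.\ $\mu$ is idempotent-separating; conversely if $\rho$ is any idempotent-separating congruence and $(a,b)\in\rho$, then $(aa^{-1},bb^{-1})\in\rho$ are two idempotents in the same class, so $aa^{-1}=bb^{-1}$ and $\rho\subseteq\mu$. Finally, for (e), the map $\phi\colon E_A\to A/\mu$, $e\mapsto e\mu$, is a well-defined homomorphism (since $(ef)\mu=e\mu\cdot f\mu$), is injective by the idempotent-separating property of $\mu$, and is surjective because for every $a\in A$ one has $aa^{-1}\in E_A\cap a\mu$ (as already observed $(aa^{-1})(aa^{-1})=aa^{-1}$). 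The main technical point I expect to wrestle with is the minimality argument in (a): one has to extract $a\,\rho\,aa^{-1}$ inside an arbitrary semilattice congruence class without appealing to inverses in $A/\rho$, which is why I would push the identity $a=(aa^{-1})a$ into $A/\rho$ and then use only the semilattice identities there.
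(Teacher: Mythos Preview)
Your proof is correct, and most of it matches the paper's argument closely (the congruence check, the idempotent-separating property, part (c), and the construction of the isomorphism in (e)). The genuine divergence is in the \emph{minimality} step of (a). The paper argues by contradiction using idempotent-surjectivity (Theorem~\ref{i-s}): if some semilattice congruence $\rho$ did not contain $\mu$, then $\mu\cap\rho$ would be a semilattice congruence strictly below $\mu$, hence with a class missing an idempotent, contradicting Theorem~\ref{i-s}. Your route is more direct and entirely elementary: you push $a=(aa^{-1})a$ into $A/\rho$ and use only the semilattice identities there to get $a\rho=(a\rho\cdot a^{-1}\rho)\cdot a\rho=a\rho\cdot a^{-1}\rho=(aa^{-1})\rho$, whence $\mu\subseteq\rho$. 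This avoids Lallement-type machinery altogether; the paper's approach, by contrast, ties the result into the idempotent-surjectivity theme that recurs throughout the article. Similarly, for (b) the paper invokes Lemma~\ref{unipotent} (a one-idempotent completely inverse $AG^{**}$-groupoid is an $AG$-group), whereas you verify the $AG$-group axioms on $G_e$ by hand; both are short, and yours is self-contained. One small remark: when you conclude that $A/\mu$ is a semilattice from $a^2\,\mu\,a$ and $ab\,\mu\,ba$, you should note (or it is implicit) that a commutative $AG$-groupoid is automatically associative, so commutativity plus idempotency suffices.
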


\begin{proof} $(a)$. Clearly, $\mu$ is an equivalence relation on $A$.~Let $(a,b)\in\mu$ and $c\in A$. Then
$$(ca)(ca)^{-1}=(ca)(c^{-1}a^{-1})=(cc^{-1})(aa^{-1})=(cc^{-1})(bb^{-1})=(cb)(cb)^{-1}$$
and similarly $(ac)(ac)^{-1}=(bc)(bc)^{-1}$. Hence $\mu$ is a congruence on $A$. Also,
$$
(aa^{-1})(aa^{-1})^{-1}=(aa^{-1})(a^{-1}(a^{-1})^{-1})=(aa^{-1})(a^{-1}a)=(aa^{-1})(aa^{-1})=aa^{-1},
$$
so $(a, aa^{-1})\in\mu$, where $aa^{-1}\in E_A$.~Since $E_A$ is a semilattice, then $S/\mu$ \nolinebreak is a semilattice, too.~Consequently, $\mu$ is a semilattice congruence on $A$.~Moreover, since $e^{-1}=e$ for every $e\in E_A$, then $\mu$ is idempotent-separating.

Finally, suppose that there is a semilattice congruence $\rho$ on $A$ such that $\mu\nsubseteq\rho$.~Then the relation $\mu\cap\rho$ is a semilattice congruence on $A$ which is properly contained in $\mu$, so not every $(\mu\cap\rho)$-class contains an idempotent of $A$, since each $\mu$-class contains exactly one idempotent, a contradiction with Theorem \ref{i-s}.~Consequently, $\mu$ must be the least semilattice congruence on $A$.

$(b)$. We have noticed above that $\mu$ is idempotent-separating. It is evident that every $\mu$-class is itself a completely inverse $AG^{**}$-groupoid, since $a^{-1}\in a\mu$ for all $a\in A$. In view of Lemma \ref{unipotent}, every $\mu$-class is an $AG$-group.

$(c)$. Let $\rho$ be an idempotent-separating congruence on $A,(a,b)\in\rho$.~Then $a^{-1}\,\rho\,b^{-1}$. It follows that $(aa^{-1},bb^{-1})\in\rho$. Thus $aa^{-1}=bb^{-1}$, so $(a,b)\in\mu$. Consequently, $\rho\subseteq\mu$.

The rest is obvious.
\end{proof}

Let $\mathcal{C}(A)$ denote the complete lattice of all congruences on a groupoid $A$. It is well known that if a sublattice $\mathcal{L}$ of $\mathcal{C}(A)$ has the property that $\alpha\beta=\beta\alpha$ for all $\alpha,\beta\in\mathcal{L}$, then $\mathcal{L}$ is a modular lattice.

Let $A$ be a completely inverse $AG^{**}$-groupoid.~Consider the complete lattice $[1_A,\mu]$ of all idempotent-separating congruences on $A$ (see Theorem \ref{mu}$(c)$). Let $\rho_1,\rho_2\in [1_A,\mu]$ and $(a,b)\in\rho_1\rho_2$. Then there is $c\in A$ such that $a\,\rho_1\,c\,\rho_2\,b$. In particular, $(a,c),(c,b)\in\mu$. Hence
$$
a=aa^{-1}\cdot a = cc^{-1}\cdot a~\rho_2~bc^{-1}\cdot a=ac^{-1}\cdot b~\rho_1~cc^{-1}\cdot b=bb^{-1}\cdot b=b,
$$
so $(a,b)\in\rho_2\rho_1$.~Thus $\rho_1\rho_2\subseteq\rho_2\rho_1$.~By symmetry, $\rho_2\rho_1\subseteq\rho_1\rho_2$. We have just shown the following theorem.

\begin{theorem}\label{m} Let $A$ be a completely inverse $AG^{**}$-groupoid.~Then the interval $[1_A,\mu]$, consisting of all idempotent-separating congruences on $A$, is a modular lattice.
\end{theorem}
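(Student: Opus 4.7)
My plan is to show that any two congruences in $[1_A,\mu]$ commute under relational composition and then invoke the standard lattice-theoretic fact, recalled in the paragraph preceding the theorem, that a sublattice of $\mathcal{C}(A)$ whose elements pairwise commute is automatically modular. Thus the whole task reduces to proving $\rho_1\rho_2=\rho_2\rho_1$ for arbitrary $\rho_1,\rho_2\in[1_A,\mu]$.

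To get $\rho_1\rho_2\subseteq\rho_2\rho_1$, I would pick $(a,b)\in\rho_1\rho_2$ and a witness $c\in A$ with $a\,\rho_1\,c\,\rho_2\,b$. The key leverage is that both $\rho_1,\rho_2\subseteq\mu$ by Theorem \ref{mu}(c), so $aa^{-1}=cc^{-1}=bb^{-1}$; this common idempotent is what drives the calculation. Starting from $a=(aa^{-1})a$, I would rewrite $aa^{-1}$ as $cc^{-1}$, use $c\,\rho_2\,b$ (hence $(cc^{-1}\cdot a,\,bc^{-1}\cdot a)\in\rho_2$) to swap the right factor, apply the $AG$-identity $(xy)z=(zy)x$ to recast $bc^{-1}\cdot a$ as $ac^{-1}\cdot b$, then use $a\,\rho_1\,c$ (hence $(ac^{-1}\cdot b,\,cc^{-1}\cdot b)\in\rho_1$), and finally collapse $cc^{-1}\cdot b=bb^{-1}\cdot b=b$. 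This produces a path $a\,\rho_2\,(\cdot)\,\rho_1\,b$, i.e.\ $(a,b)\in\rho_2\rho_1$. By symmetry the reverse inclusion holds, so $\rho_1\rho_2=\rho_2\rho_1$.

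Having established pairwise commutativity, I would conclude by quoting the general principle that a commuting sublattice of a congruence lattice is modular, applied to $[1_A,\mu]$; this is precisely the framework set up just before the theorem statement.

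I do not anticipate any genuine obstacle: the identities $(xy)z=(zy)x$ and $aa^{-1}\cdot a=a$, together with the equality $aa^{-1}=cc^{-1}=bb^{-1}$ forced by $\mu$-equivalence, make the manipulation essentially mechanical. The only subtle point—and the reason the argument needs $\rho_i\subseteq\mu$ rather than just being arbitrary congruences—is that without the equality of the three idempotents one cannot perform the crucial substitutions at the first and last steps of the chain.
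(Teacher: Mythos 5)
Your proposal is correct and coincides with the paper's own argument: the paper proves $\rho_1\rho_2\subseteq\rho_2\rho_1$ via exactly the chain $a=aa^{-1}\cdot a=cc^{-1}\cdot a~\rho_2~bc^{-1}\cdot a=ac^{-1}\cdot b~\rho_1~cc^{-1}\cdot b=bb^{-1}\cdot b=b$, using $aa^{-1}=cc^{-1}=bb^{-1}$ from $\rho_1,\rho_2\subseteq\mu$, and then invokes the same general fact that a sublattice of pairwise commuting congruences is modular. No changes needed.
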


\begin{corollary}\label{C1} The lattice of congruences on an $AG$-group is modular.
\end{corollary}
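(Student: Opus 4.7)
The plan is to derive this as an immediate specialization of Theorem \ref{m} by showing that, for an $AG$-group $A$, the interval $[1_A,\mu]$ coincides with the entire congruence lattice $\mathcal{C}(A)$. First I would recall two facts already recorded in the paper: every $AG$-group is a completely inverse $AG^{**}$-groupoid (noted at the beginning of Section 3, using that every $AG$-groupoid with a left identity is $AG^{**}$ and that in an $AG$-group $aa^{-1}=e=a^{-1}a$), and an $AG$-group contains exactly one idempotent, namely its left identity $e$ (the remark following Proposition \ref{AG-groups}). Hence Theorem \ref{mu} and Theorem \ref{m} are available for $A$.

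Next I would use $E_A=\{e\}$ to identify $\mu$ explicitly. For any $a,b\in A$ both $aa^{-1}$ and $bb^{-1}$ lie in $E_A=\{e\}$, so $aa^{-1}=e=bb^{-1}$; thus $\mu=A\times A$ is the universal congruence on $A$. By the same token, since there is only one idempotent, every congruence on $A$ separates idempotents vacuously. Consequently every $\rho\in\mathcal{C}(A)$ satisfies $1_A\subseteq\rho\subseteq\mu$, so $\mathcal{C}(A)=[1_A,\mu]$.

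The corollary now follows directly from Theorem \ref{m}, which states that $[1_A,\mu]$ is a modular lattice for any completely inverse $AG^{**}$-groupoid. There is no real obstacle here; the only point needing care is the identification $E_A=\{e\}$, which forces $\mu$ to collapse to the universal congruence and thereby absorbs all of $\mathcal{C}(A)$ into the interval covered by Theorem \ref{m}.
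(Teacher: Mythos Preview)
Your proposal is correct and is precisely the intended argument: the paper states this corollary immediately after Theorem~\ref{m} without proof, and your observation that $E_A=\{e\}$ forces $\mu=A\times A$, whence $\mathcal{C}(A)=[1_A,\mu]$, is exactly the specialization that makes the corollary immediate.
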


A completely inverse $AG^{**}$-groupoid $A$ is a semilattice $E_A$ of $AG$-groups $G_e$ $(e\in E_A)$, where $G_e=\{a\in A:aa^{-1}=e\}$ (Theorem \ref{mu}). The relation $\leq$ defined on the semilattice $E_A$ by $e\leq f\Leftrightarrow e=ef$ is the so-called \emph{natural partial order} on $E_A$.

Let $e\geq f$ and $a_e\in G_e$. Then $fa_e\in G_fG_e\subseteq G_{fe}=G_{f}$. Hence we may define a map $\phi_{e,f}:G_e\to G_f$ by
$$a_e\phi_{e,f}=fa_e~~(a_e\in G_e).$$
Also, for all $a_e,b_e\in G_e$, $(fa_e)(fb_e)=(ff)(a_eb_e)=f(a_eb_e)$, so
\begin{eqnarray}\label{homomorphism}
(a_e\phi_{e,f})(b_e\phi_{e,f})=(a_eb_e)\phi_{e,f},
\end{eqnarray}
i.e., $\phi_{e,f}$ is a homomorphism between the $AG$-groups $G_e$ and $G_f$. In particular, $e\phi_{e,f}=f$ (this follows also from $e\geq f$).~Observe that $\phi_{e,e}$ is the identical automorphism of the $AG$-group $G_e$.

Suppose now that $e\geq f\geq g$. Then for every $a_e\in G_e$,
$$(a_e\phi_{e,f})\phi_{f,g}=g(fa_e)=(gg)(fa_e)=(gf)(ga_e)=g(ga_e)=ga_e=a_e\phi_{e,g},$$
since $ga_e\in G_gG_e\subseteq G_{ge} = G_g$, that is,
\begin{eqnarray}\label{system}
\phi_{e,f}\phi_{f,g}=\phi_{e,g}
\end{eqnarray}
for every $e,f,g\in E_A$ such that  $e\geq f\geq g$.

Finally, let $a_e\in G_e$ and $a_f\in G_f$ (and so $a_ea_f\in G_{ef}$; also $e,f\geq ef$).~Then we get
$a_ea_f=(ef)(a_ea_f)=(ef\cdot ef)(a_ea_f)=((ef)a_e)((ef)a_f)$, i.e.,
\begin{eqnarray}\label{operation}
a_ea_f=(a_e\phi_{e,ef})(a_f\phi_{f,ef}).
\end{eqnarray}

Remark that we have used only the medial law in the proof of the equalities (\ref{homomorphism}), (\ref{system}) and (\ref{operation}), therefore, if an $AG$-groupoid $A$ is a semilattice $E_A$ of the $AG$-groups $G_e$ ($e\in E_A$), then these equalities hold true.

Let now $Y$ be a semilattice, $\mathcal{F}=\{A_\alpha:\alpha\in Y\}$ be a family of disjoint $AG$-groupoids of type $\mathcal{T}$, indexed by the set $Y$ ($\mathcal{F}$ may be a family of disjoint $AG$-groups). Suppose also that for each pair $(\alpha,\beta)\in Y\times Y$ such that $\alpha\geq\beta$ there is an associated homomorphism $\phi_{\alpha,\beta}:A_\alpha\to A_\beta$ such that

$(a)$ \ $\phi_{\alpha,\alpha}$ is the identical automorphism of $A_\alpha$ for every $\alpha\in Y$, and

$(b)$ \ $\phi_{\alpha,\beta}\phi_{\beta,\gamma}=\phi_{\alpha,\gamma}$ for all $\alpha,\beta,\gamma\in Y$ such that $\alpha\geq\beta\geq\gamma$.

\medskip\noindent
Put $A=\bigcup\{A_\alpha:\alpha\in Y\}$, and define a binary operation $\cdot$ on $A$ by the rule that if $a_\alpha\in A_\alpha$ and $a_\beta\in A_\beta$, then
$$a_\alpha\cdot a_\beta=(a_\alpha\phi_{\alpha,\alpha\beta})(a_\beta\phi_{\beta,\alpha\beta}),$$
where the multiplication on the right side takes place in the $AG$-groupoid $A_{\alpha\beta}$.

It is a matter of routine to check that $(A,\cdot)$ is an $AG$-groupoid.~If in addition, each $AG$-groupoid $A_\alpha$ is an $AG^{**}$-groupoid (in particular, an $AG$-group), then $(A,\cdot)$ is itself an $AG^{**}$-groupoid.~Finally, in the light of the condition $(a)$, the new multiplication coincides with the given of each $A_\alpha$, so $A$ is certainly a semilattice $Y$ of $AG$-groupoids $A_\alpha$.~We usually denote the product in $A$ also by juxtaposition, and write $A=[Y;A_\alpha;\phi_{\alpha,\beta}]$.

We call the $AG$-groupoid $[Y;A_\alpha;\phi_{\alpha,\beta}]$ a \emph{strong semilattice of $AG$-groupoids $A_\alpha$}.

In fact, we have proved the following theorem (see (\ref{homomorphism}), (\ref{system}) and (\ref{operation})).

\begin{theorem}\label{strong semilattice} Let an $AG$-groupoid $A$ be a semilattice $A/\rho$ of $AG$-groups.~Then $A$ is a strong semilattice of $AG$-groups.~In fact,
$$A=[E_A;G_e;\phi_{e,f}],$$
where for all $e,f\in E_A,$ $G_e=e\rho;$ $\phi_{e,f}:G_e\to G_f$ is given by
$$a_e\phi_{e,f}=fa_e~~(a_e\in G_e),$$
and
$$a_ea_f=(a_e\phi_{e,ef})(a_f\phi_{f,ef})~~(a_e\in G_e,a_f\in G_f).$$
In particular, $A$ is an $AG^{**}$-groupoid.
\end{theorem}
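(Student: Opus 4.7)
The plan is to read the strong-semilattice structure directly off the hypotheses, using what the text has already established. The whole content of the theorem reduces to the identification $A/\rho\cong E_A$ together with the three identities (\ref{homomorphism}), (\ref{system}) and (\ref{operation}), all of which are in place.

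First, I would set up the indexing. Since every $\rho$-class $e\rho$ is an $AG$-group, it contains exactly one idempotent, namely its unique left identity (as observed right after Proposition \ref{AG-groups}). Hence each $\rho$-class meets $E_A$ in a single point, and the restriction of the natural map $A\to A/\rho$ to $E_A$ is a bijection onto $A/\rho$. Because $\rho$ is a semilattice congruence and $E_A$ is closed under multiplication, this bijection is an isomorphism of semilattices, so I may index the $\rho$-classes as $G_e=e\rho$ with $e\in E_A$. From the congruence property, $G_eG_f\subseteq G_{ef}$ for all $e,f\in E_A$.

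Next, for $e,f\in E_A$ with $e\geq f$ (that is, $ef=f$), I define $\phi_{e,f}\colon G_e\to G_f$ by $a_e\phi_{e,f}=fa_e$. This is well-defined since $fa_e\in G_fG_e\subseteq G_{fe}=G_f$. Moreover $\phi_{e,e}$ is the identical automorphism of $G_e$, because $e$ is the unique left identity of the $AG$-group $G_e$ and hence $ea_e=a_e$. The homomorphism property (\ref{homomorphism}) and the composition law $\phi_{e,f}\phi_{f,g}=\phi_{e,g}$ from (\ref{system}) were derived just above using only the medial law, and the remark immediately following those derivations states precisely that under the present hypothesis (namely, $A$ a semilattice of the $AG$-groups $G_e$) both identities continue to hold. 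Similarly the formula $a_ea_f=(a_e\phi_{e,ef})(a_f\phi_{f,ef})$ is exactly (\ref{operation}), valid here for the same reason.

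Combining these observations, the data $(E_A,\{G_e\},\{\phi_{e,f}\})$ satisfies the defining conditions of a strong semilattice of $AG$-groups, and by (\ref{operation}) the multiplication arising from that construction reproduces the given multiplication on $A$; thus $A=[E_A;G_e;\phi_{e,f}]$. Finally, since each $G_e$ is an $AG$-group and therefore an $AG^{**}$-groupoid, the general remark made just before the theorem yields that $A$ is an $AG^{**}$-groupoid. The only step that really needs care is the identification $E_A\cong A/\rho$; once that is in hand, the theorem is an immediate reassembly of identities already proved.
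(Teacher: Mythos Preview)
Your proof is correct and follows essentially the same approach as the paper: both argue that $\rho$ is idempotent-separating (so $E_A\cong A/\rho$) and then invoke the identities (\ref{homomorphism}), (\ref{system}), (\ref{operation}) together with the remark that their derivations used only the medial law. Your version simply spells out in more detail what the paper compresses into a single sentence.
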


\begin{proof}Let $A$ be a semilattice $A/\rho$ of $AG$-groups, then $\rho$ is idempotent-separating. Hence $E_A\cong A/\rho$, so $E_A$ is necessarily a semilattice.~Thus $A$ is a semilattice $E_A$ of $AG$-groups $G_e=e\rho$ ($e\in E_A$).~This implies the thesis of the theorem.
\end{proof}

It is well known that if a semigroup $S$ is a semilattice of groups, then its idempotents are \emph{central}, that is, $se=es$ for all $s\in S$ and $e\in E_S$.~The following proposition \nopagebreak says particularly that there is no non-associative $AG$-groupoids which are a semilattice of $AG$-groups and their idempotents are central.

\begin{proposition}\label{abelian} Let $A$ be an $AG$-groupoid which is a semilattice of $AG$-groups.~If \nolinebreak the idempotents of $A$ are central, then $A$ is a strong semilattice of abelian groups. In particular, $A$ is a commutative semigroup.
\end{proposition}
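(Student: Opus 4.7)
The plan is to apply Theorem~\ref{strong semilattice} to write $A=[E_A;G_e;\phi_{e,f}]$ as a strong semilattice of $AG$-groups, and then use the centrality of idempotents to upgrade each constituent to an abelian group. The key observation is that, for every $e\in E_A$, the left identity of the $AG$-group $G_e$ is $e$ itself, so centrality yields $ae=ea=a$ for every $a\in G_e$; that is, $e$ is a two-sided identity of $G_e$. Since any $AG$-groupoid possessing a right identity is a commutative monoid (as noted in the Introduction), $G_e$ is commutative, and combined with the existence of inverses in the $AG$-group $G_e$, this forces $G_e$ to be an abelian group.

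Next I would verify that $A$ itself is commutative. For $a\in G_e$ and $b\in G_f$, formula~(\ref{operation}) together with the description $a_e\phi_{e,ef}=(ef)a_e$ from Theorem~\ref{strong semilattice} gives
$$ab=((ef)a)((ef)b),\qquad ba=((ef)b)((ef)a),$$
and since $G_{ef}$ is an abelian group by the previous paragraph, the two right-hand sides coincide, so $ab=ba$. Associativity is then automatic from commutativity and the left invertive law: $(xy)z=(zy)x=(yz)x=x(yz)$. Thus $A$ is a commutative semigroup, and the strong semilattice decomposition furnished by Theorem~\ref{strong semilattice} already exhibits $A$ as a strong semilattice of abelian groups, giving both halves of the conclusion.

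I do not anticipate any real obstacle. The heart of the matter is the single remark that a left identity of an $AG$-group becomes two-sided as soon as it is central; once each $G_e$ is thereby upgraded to an abelian group, the remainder is a direct unpacking of the strong semilattice formula~(\ref{operation}) combined with the $AG$-identity.
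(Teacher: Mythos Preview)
Your proposal is correct and follows essentially the same route as the paper: decompose $A$ via Theorem~\ref{strong semilattice}, use centrality of $e$ in $G_e$ to make each $G_e$ an abelian group, and then read off commutativity of $A$ from the strong-semilattice product formula. Your argument is in fact slightly more explicit than the paper's, since you spell out why $G_e$ becomes a commutative monoid and why associativity of $A$ follows from commutativity together with the left invertive law.
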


\begin{proof} Let $A=[E_A;G_e;\phi_{e,f}]$. If the idempotents of $A$ are central, then particularly for all $e\in E_A$, $ae=ea$ for every $a\in G_e$.~This implies that every $G_e$ is a commutative group, so $A$ is a strong semilattice of abelian groups. From the definition of the multiplication in $[E_A;G_e;\phi_{e,f}]$ and from the fact that abelian groups are commutative semigroups follows that $A$ is a commutative semigroup.
\end{proof}

\begin{remark} Let $A$ be a completely inverse $AG^{**}$-groupoid.~Then $ae=ea$ for all $a\in A,$ $e\in E_A$ if and only if $a=a(a^{-1}a)$ for every $a\in A$. Indeed,
$$ea=e(aa^{-1}\cdot a)=(e\cdot aa^{-1})a=(a\cdot aa^{-1})e=(a(a^{-1}a))e.$$
This implies that if $a=a(a^{-1}a)$, then the idempotents of $A$ are central.~The converse implication is obvious.

In the proof of Theorem \ref{i-s} we have shown that $a^2=a^2(xa^2)$ for every $a\in A$, where $x\in V(a^2)$. Furthermore, $A^{(2)}=\{a^2:a\in A\}$ is an $AG^{**}$-groupoid, since $a^2b^2=(ab)^2$ for all $a,b\in A$.~Also, $(a^{-1})^2\in V(a^2)$ for every $a\in A$.~Evidently, $E_A\subseteq A^{(2)}$.~Consequently, $A^{(2)}$ is a completely inverse $AG^{**}$-groupoid in which the idempotents are central.~From Proposition \ref{abelian} we obtain the following theorem.
\end{remark}

\begin{theorem}\label{A^2} If $A$ is a completely inverse $AG^{**}$-groupoid, then $A^{(2)}$ is a strong semi\-lattice of abelian groups with semilattice $E_A$ of idempotents.
\end{theorem}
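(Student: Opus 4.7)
The plan is to verify each claim made in the Remark preceding the theorem, so that Proposition~\ref{abelian} can be applied to $A^{(2)}$.

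First I would check that $A^{(2)}$ is closed under the multiplication of $A$. Using the medial law $(\ref{medial})$ one computes $a^2b^2=(ab)(ab)=(ab)^2$, so that $a^2b^2\in A^{(2)}$; hence $A^{(2)}$ is an $AG^{**}$-subgroupoid of $A$. Next I would note that $E_A\subseteq A^{(2)}$ trivially, because each idempotent $e$ satisfies $e=e^2$; conversely, any idempotent of $A^{(2)}$ is an idempotent of $A$, so $E_{A^{(2)}}=E_A$.

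Then I would verify that $A^{(2)}$ is a completely inverse $AG^{**}$-groupoid. Given $a\in A$, the element $(a^{-1})^2$ lies in $A^{(2)}$, and a short computation with $(\ref{medial})$ shows $(a^2)((a^{-1})^2)=(aa^{-1})^2=aa^{-1}\in E_A$ (using $aa^{-1}\in E_A$) and symmetrically $((a^{-1})^2)(a^2)=(a^{-1}a)^2=a^{-1}a$; since $aa^{-1}=a^{-1}a$, one verifies that $(a^{-1})^2$ is the unique inverse of $a^2$ in $A^{(2)}$ and that it commutes with $a^2$ under the product of $A^{(2)}$.

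The crucial step is to show that in $A^{(2)}$ every idempotent is central. For this I invoke the computation from the proof of Theorem~\ref{i-s}, which established $a^2=a^2(xa^2)$ for $x\in V(a^2)$. Taking $x=(a^{-1})^2=(a^2)^{-1}$, this reads $b=b(b^{-1}b)$ for every $b=a^2\in A^{(2)}$. By the Remark just above, this identity is equivalent to the idempotents of $A^{(2)}$ being central. Since $A^{(2)}$ is a completely inverse $AG^{**}$-groupoid, Theorem~\ref{mu} makes it a semilattice $E_{A^{(2)}}=E_A$ of $AG$-groups, and now Proposition~\ref{abelian} upgrades this to a strong semilattice of abelian groups, yielding the theorem. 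No step looks technically hard; the only point that requires care is ensuring that the identity $b=b(b^{-1}b)$ of the Remark really follows from what was proved inside Theorem~\ref{i-s} for the particular element $x=(a^2)^{-1}$.
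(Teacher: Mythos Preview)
Your proposal is correct and follows essentially the same route as the paper: the paper's argument is entirely contained in the Remark preceding the theorem, and you have faithfully recapitulated each of its steps (closure of $A^{(2)}$ via the medial law, $(a^{-1})^2\in V(a^2)$, the identity $a^2=a^2(xa^2)$ from the proof of Theorem~\ref{i-s} yielding $b=b(b^{-1}b)$ on $A^{(2)}$, hence central idempotents, hence Proposition~\ref{abelian} applies). The only addition you make is the explicit invocation of Theorem~\ref{mu} before Proposition~\ref{abelian}, which the paper leaves implicit.
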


The next theorem gives necessary and sufficient conditions for an $AG$-groupoid to be a completely inverse $AG^{**}$-groupoid.

\begin{theorem}\label{major} The following conditions concerning an $AG$-groupoid $A$ are equivalent$:$

$(a)$ \ $A$ is a completely inverse $AG^{**}$-groupoid$;$

$(b)$ \ $A$ is a semilattice of $AG$-groups$;$

$(c)$ \ $A$ is a strong semilattice of $AG$-groups.
\end{theorem}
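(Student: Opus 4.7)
I would prove the theorem by a cycle of implications $(a)\Rightarrow(b)\Rightarrow(c)\Rightarrow(a)$. The first two implications are essentially already in hand: part $(d)$ of Theorem~\ref{mu} says precisely that a completely inverse $AG^{**}$-groupoid is a semilattice of $AG$-groups, which gives $(a)\Rightarrow(b)$; and Theorem~\ref{strong semilattice} upgrades any semilattice of $AG$-groups to a strong semilattice of $AG$-groups, which gives $(b)\Rightarrow(c)$. So only $(c)\Rightarrow(a)$ requires fresh work.

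For $(c)\Rightarrow(a)$, suppose $A=[Y;A_\alpha;\phi_{\alpha,\beta}]$ with each $A_\alpha$ an $AG$-group. The paragraph preceding Theorem~\ref{strong semilattice} already observes that such a construction produces an $AG^{**}$-groupoid, so I would just cite that and focus on showing that every $a\in A$ has a unique inverse $a^{-1}$ with $aa^{-1}=a^{-1}a$. Given $a\in A_\alpha$, let $e_\alpha$ be the (unique) idempotent of the $AG$-group $A_\alpha$ and let $a'$ be the inverse of $a$ inside $A_\alpha$. Since $\phi_{\alpha,\alpha}$ is the identity on $A_\alpha$, the multiplication in $A$ restricted to $A_\alpha$ coincides with the given multiplication on $A_\alpha$; hence $aa'=a'a=e_\alpha$ in $A$, and then $(aa')a=e_\alpha a=a$ and $(a'a)a'=e_\alpha a'=a'$, so $a'\in V(a)$ with $aa'=a'a\in E_A$. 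Uniqueness of $a'$ is free: the paragraph after Proposition~\ref{AG-groups} records that in any regular $AG^{**}$-groupoid $|V(x)|=1$, and $A$ is regular by what we just exhibited.

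The only mildly delicate point is verifying that the multiplication inside each $A_\alpha$ really agrees with the one induced from $[Y;A_\alpha;\phi_{\alpha,\beta}]$, but this is immediate from condition $(a)$ of the strong-semilattice definition, which forces $a_\alpha\cdot b_\alpha=(a_\alpha\phi_{\alpha,\alpha})(b_\alpha\phi_{\alpha,\alpha})=a_\alpha b_\alpha$ computed in $A_\alpha$. So there is no real obstacle here; the proof is essentially bookkeeping that assembles Theorem~\ref{mu}, Theorem~\ref{strong semilattice}, and the uniqueness-of-inverses fact from~\cite{BPS} into a single chain.
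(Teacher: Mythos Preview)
Your proposal is correct and follows exactly the same route as the paper: the cycle $(a)\Rightarrow(b)\Rightarrow(c)\Rightarrow(a)$, with the first two steps delegated to Theorem~\ref{mu} and Theorem~\ref{strong semilattice}, and the last step handled by taking the $AG$-group inverse inside the component $A_\alpha$ and noting that the strong-semilattice construction is an $AG^{**}$-groupoid. Your treatment is slightly more explicit than the paper's (you spell out why the multiplication in $A_\alpha$ agrees with the global one and why the inverse is unique via the result from~\cite{BPS}), but the argument is the same.
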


\begin{proof}$(a)\implies (b)$ by Theorem \ref{mu} and $(b)\implies (c)$ by Theorem \ref{strong semilattice}.

$(c)\implies (a)$. In that case, $A$ is an $AG^{**}$-groupoid (see again Theorem \ref{strong semilattice}). Also, let $a\in A$. Then $a$ belongs to some $AG$-group $G_e$, where $e$ is a left identity of $G_e$. Consider now a unique inverse $a^{-1}$ of $a$ in $G_e$. Then evidently $a=(aa^{-1})a,$ $a^{-1}=(a^{-1}a)a^{-1}$ and $aa^{-1}=a^{-1}a=e$. Consequently, $A$ is a completely inverse $AG^{**}$-groupoid.
\end{proof}

\begin{remark} In view of the above theorem, we are able to construct completely inverse $AG^{**}$-groupoids.
\end{remark}

Let $A$ be a completely inverse $AG^{**}$-groupoid.~The relation
$\leq_A$ defined on $A$ by $a\leq_A b$ if $a\in E_Ab$ is the
\emph{natural partial order} on $A$. Notice the restriction of
$\leq_A$ to $E_A$ is equal to the natural partial order $\leq$ on
$E_A$, therefore, we will be write briefly $\leq$ instead of
$\leq_A$.

\medskip

The following result can be deduced from \cite{P}.

\begin{lemma}\label{<} In any completely inverse $AG^{**}$-groupoid $A$, the relation $\leq$ is a compatible partial order on $A$.~Also, $a\leq b$ implies $a^{-1}\leq b^{-1}$ for all $a,b\in A$.
\end{lemma}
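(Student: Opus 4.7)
The plan is to unpack the definition $a\leq b\iff a\in E_A b$ and verify, in order, reflexivity, transitivity, left and right compatibility, antisymmetry, and finally the clause on inverses. The only ingredients needed are already in hand: the $AG^{**}$-identity $x(yz)=y(xz)$, its corollary $e(ab)=(ea)b$ for $e\in E_A$ (Proposition~\ref{semilattice}), and the equality $(xy)^{-1}=x^{-1}y^{-1}$ valid in any completely inverse $AG^{**}$-groupoid (together with $e^{-1}=e$ for every idempotent $e$, both consequences of uniqueness of inverses and of $V(a)V(b)\subseteq V(ab)$).

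Reflexivity is immediate from $a=(aa^{-1})a$ with $aa^{-1}\in E_A$. Transitivity: if $a=eb$ and $b=fc$ with $e,f\in E_A$, then $a=e(fc)=(ef)c$ by the $e$-identity, and $ef\in E_A$. For left compatibility, assume $a=eb$ and apply the $AG^{**}$-law: $ca=c(eb)=e(cb)$, so $ca\leq cb$. For right compatibility, the $e$-identity gives $ac=(eb)c=e(bc)$, hence $ac\leq bc$. Combining these yields the two-sided compatibility $a\leq b,\,c\leq d\Rightarrow ac\leq bd$.

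The delicate step, and the one I expect to be the main obstacle, is antisymmetry. Suppose $a=eb$ and $b=fa$ with $e,f\in E_A$. The trick is to notice first that $e$ fixes $a$: by the $e$-identity, $ea=e(eb)=(ee)b=eb=a$. With this, the $AG^{**}$-law finishes the job in one line: $a=e(fa)=f(ea)=fa=b$. Finally, for the inverse clause, $a=eb$ immediately gives $a^{-1}=(eb)^{-1}=e^{-1}b^{-1}=eb^{-1}\in E_A\cdot b^{-1}$, so $a^{-1}\leq b^{-1}$, completing the proof.
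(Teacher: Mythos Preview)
Your proof is correct and follows essentially the same approach as the paper's own proof. The only minor differences are cosmetic: for antisymmetry you use the $AG^{**}$-law $e(fa)=f(ea)$ directly where the paper routes through $(ef)a=(fe)a$ via commutativity of $E_A$, and for compatibility you verify the left and right cases separately whereas the paper handles both at once via the medial law $(eb)(fd)=(ef)(bd)$.
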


\begin{proof}We include a simple proof. It is evident that $\leq$ is reflexive and preserves inverses. Let $a\leq b$ and $b\leq a$, i.e., $a=eb$ and $b=fa$ for some $e,f\in E_A$.~Then by Proposition \ref{semilattice}, $ea=a$.~Using again Proposition \ref{semilattice}, $a=eb=e(fa)=(ef)a=(fe)a=f(ea)=fa=b$. Hence $\leq$ is antisymmetric. From Proposition \ref{semilattice} it follows also that $\leq$ is transitive. Finally, if $a\leq b$ and $c\leq d$, that is, $a=eb$ and $c=fd$ for some $e,f\in E_A$, then we obtain that $ac=(eb)(fd)=(ef)(bd)$.~Thus $ac\leq bd$.
\end{proof}

For some equivalent definitions of the relation $\leq$, consult \cite{P}. Moreover, we have the following proposition.

\begin{proposition} In any completely inverse $AG^{**}$-groupoid $A$, $\leq\cap$ $\mu=1_A$, that is, if $A=[E_A;G_e;\phi_{e,f}]$, then $\leq_{|G_e}=1_{G_e}$ for every $e\in E_A$.
\end{proposition}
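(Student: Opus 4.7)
The plan is to prove the first formulation $\leq\cap\ \mu\subseteq 1_A$; the restatement $\leq_{|G_e}=1_{G_e}$ then follows at once, because any two elements of the same $\mu$-class $G_e$ automatically satisfy $aa^{-1}=bb^{-1}=e$.

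So suppose $a\leq b$ and $a\,\mu\,b$; this gives $a=eb$ for some $e\in E_A$ and a common idempotent $f:=aa^{-1}=bb^{-1}$. The first step is to pin down the position of $e$ relative to $f$ in the semilattice $E_A$. Using $e^{-1}=e$ and $(eb)^{-1}=eb^{-1}$ (valid in any completely inverse $AG^{**}$-groupoid, since $V(e)V(b)\subseteq V(eb)$ and inverses are unique), the medial law gives $aa^{-1}=(eb)(eb^{-1})=(ee)(bb^{-1})=ef$, so $ef=f$, i.e.\ $e\geq f$ in $E_A$.

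Next I would extract $a=b$ from the left-identity behaviour of $f$ on $G_f$. Since $b\in G_f$, we have $b=(bb^{-1})b=fb$, so $a=eb=e(fb)$; by Proposition \ref{semilattice} (the identity $e\cdot ab=ea\cdot b$ for $e\in E_A$), this equals $(ef)b=fb=b$, as required.

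There is no real obstacle; the only slightly non-obvious choice is to compute $aa^{-1}$ from the presentation $a=eb$ via the medial law in order to extract $ef=f$. Once that is in hand, Proposition \ref{semilattice} finishes the argument immediately, and no strong-semilattice machinery (nor anything beyond the medial law, Proposition \ref{semilattice}, and uniqueness of inverses) is required.
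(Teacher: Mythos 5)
Your argument is correct and follows essentially the same route as the paper: both start from $a=eb$ and compute $aa^{-1}=(eb)(eb^{-1})=(ee)(bb^{-1})$ via the medial law and uniqueness of $(eb)^{-1}=eb^{-1}$, then finish with Proposition \ref{semilattice}. The only cosmetic difference is that the paper pushes this to $aa^{-1}=ab^{-1}$ and closes with the left invertive law, whereas you extract $ef=f$ and substitute directly into $a=e(fb)=(ef)b$; the two endgames are interchangeable.
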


\begin{proof} Let $a(\leq\cap$ $\mu)b$. Then $aa^{-1}=bb^{-1}$ and $a=eb$ for some $e\in E_A$, therefore we get $aa^{-1}=(eb)(eb^{-1})=(ee)(bb^{-1})=e(bb^{-1})=(eb)b^{-1}=ab^{-1}$. Consequently,
$$a=(aa^{-1})a=(bb^{-1})a=(ab^{-1})b=(aa^{-1})b=(bb^{-1})b=b,$$
as required.
\end{proof}

Finally, for any nonempty subset $B$ of a completely inverse $AG^{**}$-groupoid $A$, we call
$$B\omega=\{a\in A:\exists~(b\in B)~b\leq a\}$$
the \emph{closure of $B$ in $A$}; if $B=B\omega$, then $B$ is \emph{closed in $A$}. Note that $B\omega$ is closed in $A$.

It is clear that a subgroupoid $B$ of a completely inverse $AG^{**}$-groupoid $A$ is itself  a completely inverse $AG^{**}$-groupoid if and only if $b\in B$ implies $b^{-1}\in B$ for every $b\in B$. In such a case, $B$ is a \emph{completely inverse $AG^{**}$-subgroupoid} of $A$.~Using Lemma \ref{<}, one can prove the following proposition.

\begin{proposition}\label{omega}If $B$ is a completely inverse $AG^{**}$-subgroupoid of a completely inverse $AG^{**}$-groupoid $A$, then $B\omega$ is a closed completely inverse $AG^{**}$-subgroupoid of $A$.
\end{proposition}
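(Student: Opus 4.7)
The plan is to verify the three required properties of $B\omega$ in turn, each one reducing to a clause of Lemma \ref{<} (antisymmetry plus reflexivity, transitivity, inverse-preservation and compatibility of $\leq$) together with the fact that $B$ itself is a completely inverse $AG^{**}$-subgroupoid.

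First I would check that $B\omega$ is closed in $A$, i.e.\ $(B\omega)\omega=B\omega$. The inclusion $B\omega\subseteq(B\omega)\omega$ is immediate by reflexivity of $\leq$. For the reverse, take $a\in(B\omega)\omega$; then there is $c\in B\omega$ with $c\leq a$, and in turn some $b\in B$ with $b\leq c$. Transitivity of $\leq$ from Lemma \ref{<} yields $b\leq a$, whence $a\in B\omega$.

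Next I would show $B\omega$ is a subgroupoid. Given $a_1,a_2\in B\omega$, pick $b_1,b_2\in B$ with $b_i\leq a_i$. Compatibility of $\leq$ (last assertion in Lemma \ref{<}) gives $b_1b_2\leq a_1a_2$, and since $B$ is a subgroupoid we have $b_1b_2\in B$, so $a_1a_2\in B\omega$. For closure under inverses, take $a\in B\omega$ with witness $b\in B$, $b\leq a$. Lemma \ref{<} then gives $b^{-1}\leq a^{-1}$, and because $B$ is a completely inverse $AG^{**}$-subgroupoid, $b^{-1}\in B$, hence $a^{-1}\in B\omega$. Thus $B\omega$ satisfies the criterion stated just before the proposition for being a completely inverse $AG^{**}$-subgroupoid of $A$.

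None of the steps looks like it carries a genuine obstacle; the whole argument is a bookkeeping exercise on the natural partial order once Lemma \ref{<} is available. The only mildly delicate point is remembering that the compatibility of $\leq$ with multiplication, and not merely its preservation under inversion, is needed for the subgroupoid part, so I would be careful to invoke both halves of Lemma \ref{<} explicitly.
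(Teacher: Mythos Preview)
Your argument is correct and is exactly the route the paper intends: the text merely says ``Using Lemma \ref{<}, one can prove the following proposition,'' and your three verifications (closedness via transitivity, subgroupoid via compatibility, inverse-closure via the second clause of Lemma \ref{<}) are precisely what that sentence unpacks to. Note that the paper has already observed, just before defining ``closed,'' that $B\omega$ is closed for any nonempty $B$, so your first paragraph even reproves something the paper takes for granted.
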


In particular, $E_A\omega$ is a closed completely inverse $AG^{**}$-subgroupoid of $A$.~It is easy to see that
$$
E_A\omega=\{a\in A:(\exists\,e\in E_A)~ea\in E_A\}.
$$

\section{Certain $E$-unitary congruences}

Let $\rho$ be a congruence on a completely inverse $AG^{**}$-groupoid $A$.~By the \emph{kernel} ker($\rho$) (respectively the \emph{trace} tr($\rho$)) of $\rho$ we shall mean the set $\{a\in A: (a,a^2)\in\rho\}$ (respectively the restriction of $\rho$ to the set $E_A$). Note that tr($\rho$) is a congruence on the semilattice \nolinebreak $E_A$. Also, in the light of Theorem \ref{i-s},
$$
\ker(\rho)=\{a\in A:\exists~(e\in E_A)\,(a,e)\in\rho\}=\bigcup\{e\rho:e\in E_A\}.
$$

The following proposition may be sometimes useful.

\begin{proposition}\label{ab=ba} Let $A=[E_A;G_e;\phi_{e,f}]$ be a completely inverse $AG^{**}$-groupoid and let $a,b\in A$ be such that $ab\in E_A$.~Then $ab=ba$.
\end{proposition}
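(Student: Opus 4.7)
My plan is to derive $ba = ab$ directly from the two defining identities, using just the fact that the idempotent $f = bb^{-1}$ fixes $b$ on the left. Write $g = ab$ (so $g \in E_A$ by hypothesis) and $f = bb^{-1} \in E_A$; note that $b = (bb^{-1})b = fb$.

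The first step will be to show $ba = gf$. Applying the left-invertive law $(xy)z = (zy)x$ once to $(fb)a$ gives
\[ba = (fb)a = (ab)f = gf.\]
This already places $ba$ in $E_A$, since $gf$ is a product of two idempotents in the semilattice $E_A$.

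The second step will be to show $fg = g$, using $AG^{**}$, $x(yz) = y(xz)$, together with $fb = b$:
\[fg = f(ab) = a(fb) = ab = g.\]
Since $E_A$ is commutative by Proposition \ref{semilattice}, $gf = fg = g$, and combining with the first step yields $ba = gf = g = ab$.

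The only real obstacle is spotting the right pair of identity applications: once one notices that $b$ absorbs $f$ on the left, the left-invertive law shifts $f$ all the way to the right of the product $ab$, and the $AG^{**}$ identity handles the analogous absorption on the other side. Conceptually, $fg = g$ is just the statement that $g \leq f$ in the natural partial order on $E_A$, which is to be expected because $g = ab$ inherits the idempotent $bb^{-1}$ coming from $b$.
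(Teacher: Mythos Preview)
Your proof is correct, and it follows a genuinely different route from the paper's argument.

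The paper first shows $ba\in E_A$ via
\[
ba \;=\; b\bigl((aa^{-1})a\bigr)\;=\;(aa^{-1})(ba)\;=\;(ab)(a^{-1}a)\in E_A,
\]
and then appeals to the structure $A=[E_A;G_e;\phi_{e,f}]$: since $(ab)(ab)^{-1}=(aa^{-1})(bb^{-1})=(ba)(ba)^{-1}$, the products $ab$ and $ba$ lie in the \emph{same} $AG$-group $G_e$, and that group has a unique idempotent, so $ab=ba$.

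Your argument bypasses the structural decomposition entirely. You compute $ba=(fb)a=(ab)f=gf$ directly, then use the $AG^{**}$ identity to get $fg=a(fb)=ab=g$, and finish using only commutativity of $E_A$. This is more elementary---it never uses the semilattice-of-$AG$-groups description even though it appears in the hypothesis---and it produces the equality $ba=ab$ by an explicit calculation rather than by a uniqueness-of-idempotent argument. The paper's approach, on the other hand, illustrates how the $\mu$-class structure controls idempotents, which fits the surrounding development; your approach shows the result really only needs the raw identities plus Proposition~\ref{semilattice}.
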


\begin{proof}Let $ab=e\in E_A$. Then
$$ba=b(aa^{-1}\cdot a)=(aa^{-1})(ba)=(ab)(a^{-1}a)\in E_A.$$
Since $ab,ba\in G_e$, then $ab=ba$.
\end{proof}

The following theorem says particularly that each congruence on a completely inverse $AG^{**}$-groupoid is uniquely determined by its kernel and trace.

\begin{theorem}\label{kernel-trace} If $\rho$ is a congruence on a completely inverse $AG^{**}$-groupoid $A$, then
$$
(a,b)\in\rho\iff (aa^{-1},bb^{-1})\in{\rm tr}(\rho)~\&~ab^{-1}\in\ker(\rho).
$$
Thus for all $\,\rho_1,\rho_2\in\mathcal{C}(A)$,
$$
\rho_1\subseteq\rho_2\iff{\rm tr}(\rho_1)\subseteq{\rm tr}(\rho_2)~\&~\ker(\rho_1)\subseteq\ker(\rho_2).
$$
In particular, each congruence on a completely inverse $AG^{**}$-groupoid is uniquely determined by its kernel and trace.
\end{theorem}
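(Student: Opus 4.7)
My plan is to prove the biconditional for a single congruence first; the containment criterion and uniqueness then fall out by routine bookkeeping. For the forward direction, assume $(a,b)\in\rho$. Since $\rho$ preserves inverses (observed just before Theorem \ref{mu}), $(a^{-1},b^{-1})\in\rho$ and hence $(aa^{-1},bb^{-1})\in\rho$; as both entries lie in $E_A$, this is the trace assertion. Multiplying also gives $(ab^{-1},bb^{-1})\in\rho$, and since $bb^{-1}\in E_A$, we have $ab^{-1}\in\ker(\rho)$.

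For the backward direction, assume $(aa^{-1},bb^{-1})\in{\rm tr}(\rho)$ and $ab^{-1}\,\rho\,e$ for some $e\in E_A$. The key computation uses the medial law (\ref{medial}) together with $b^{-1}b=bb^{-1}$:
$$(ab^{-1})(ab^{-1})^{-1}=(ab^{-1})(a^{-1}b)=(aa^{-1})(b^{-1}b)=(aa^{-1})(bb^{-1}).$$
From $ab^{-1}\,\rho\,e$ and $e^{-1}=e$ I get $(ab^{-1})^{-1}\,\rho\,e$, so $(ab^{-1})(ab^{-1})^{-1}\,\rho\,e^{2}=e\,\rho\,ab^{-1}$; thus $ab^{-1}\,\rho\,(aa^{-1})(bb^{-1})$. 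The trace hypothesis gives $(aa^{-1})(bb^{-1})\,\rho\,(bb^{-1})(bb^{-1})=bb^{-1}$, hence $ab^{-1}\,\rho\,bb^{-1}$. The $AG$-identity $(xy)z=(zy)x$ applied with $x=a,y=b^{-1},z=b$ yields $(ab^{-1})b=(bb^{-1})a$, while the trace hypothesis gives $(bb^{-1})a\,\rho\,(aa^{-1})a=a$. Chaining these facts,
$$a\;\rho\;(bb^{-1})a=(ab^{-1})b\;\rho\;(bb^{-1})b=b,$$
which is exactly what was required.

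For the containment criterion, the implication $\rho_{1}\subseteq\rho_{2}\Rightarrow{\rm tr}(\rho_{1})\subseteq{\rm tr}(\rho_{2})$ and $\ker(\rho_{1})\subseteq\ker(\rho_{2})$ is immediate from the definitions. The converse follows by applying the biconditional in both directions: if $(a,b)\in\rho_{1}$, the forward direction yields $(aa^{-1},bb^{-1})\in{\rm tr}(\rho_{1})\subseteq{\rm tr}(\rho_{2})$ and $ab^{-1}\in\ker(\rho_{1})\subseteq\ker(\rho_{2})$, and the backward direction applied to $\rho_{2}$ then delivers $(a,b)\in\rho_{2}$. The uniqueness statement follows from the containment criterion applied in both directions. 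I expect the main obstacle to be precisely the backward direction of the biconditional — specifically, spotting that the medial and paramedial apparatus collapses $(ab^{-1})(ab^{-1})^{-1}$ into $(aa^{-1})(bb^{-1})$, which is what forces the isolated kernel datum on $ab^{-1}$ to combine with the trace datum on the idempotent pair. Once this identification is in hand, the remaining work is a short chain of rewrites using $(xy)z=(zy)x$ and the $\mu$-type identity $aa^{-1}\cdot a=a$.
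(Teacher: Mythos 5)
Your proof is correct, and its skeleton coincides with the paper's: the forward direction is the same two-line observation, and the closing chain $a\,\rho\,(bb^{-1})a=(ab^{-1})b\,\rho\,(bb^{-1})b=b$ is exactly the paper's final display. The one place you diverge is the intermediate step $ab^{-1}\,\rho\,bb^{-1}$: the paper gets it by passing to $A/\rho$, noting that $(ab^{-1})\rho$ and $(bb^{-1})\rho$ are $\mu_{A/\rho}$-related idempotents, and invoking Theorem \ref{mu}$(c)$ (that $\mu$ is idempotent-separating), whereas you stay inside $A$ and compute $(ab^{-1})(ab^{-1})^{-1}=(aa^{-1})(b^{-1}b)=(aa^{-1})(bb^{-1})$ by the medial law, then combine this with $ab^{-1}\,\rho\,e=e^{2}\,\rho\,(ab^{-1})(ab^{-1})^{-1}$ and the trace hypothesis. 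The two arguments are really the same computation in different clothing --- the paper's appeal to $\mu_{A/\rho}$ rests on precisely the identity $(xy)(xy)^{-1}=(xx^{-1})(yy^{-1})$ that you write out --- but your version is self-contained and does not require knowing that the quotient is again completely inverse or that $\mu$ is the maximum idempotent-separating congruence there; the paper's version is shorter given that this machinery is already in place. The bookkeeping for the containment criterion and uniqueness is handled correctly in both.
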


\begin{proof} Let $(a,b)\in\rho$. Then evidently $(a^{-1},b^{-1}),(ab^{-1},bb^{-1})\in\rho$, so $(aa^{-1},bb^{-1})\in\text{tr}(\rho)$ and $ab^{-1}\in\ker(\rho)$.

Conversely, let now $(aa^{-1},bb^{-1})\in\text{tr}(\rho),ab^{-1}\in\text{ker}(\rho)$.~In view of Theorem \ref{mu}, $(a\rho,b\rho)\in\mu_{S/\rho}$, so $((ab^{-1})\rho,(bb^{-1})\rho)\in\mu_{S/\rho}$.~Since $ab^{-1}\in\text{ker}(\rho)$, then $(ab^{-1})\rho\in E_{A/\rho}$. Evidently, $(bb^{-1})\rho\in E_{A/\rho}$.~Hence $(ab^{-1})\rho=(bb^{-1})\rho$ (by Theorem \ref{mu}$(c)$).~Thus
$$a\rho=(aa^{-1}\cdot a)\rho=(bb^{-1}\cdot a)\rho=(ab^{-1}\cdot b)\rho=(bb^{-1}\cdot b)\rho=b\rho,$$
as required.~The rest of the theorem follows from the first equivalence.
\end{proof}

\begin{remark}\label{Clifford} Note that the first part of the above theorem is true for an arbitrary Clifford semigroup, the proof is very similar.~In fact, if $ab^{-1}\in\text{ker}(\rho)$, then $$(ab^{-1})\rho=(b^{-1}a)\rho=(b^{-1}b)\rho,$$
so $a\rho=(aa^{-1}\cdot a)\rho=(bb^{-1}\cdot a)\rho=(b\cdot b^{-1}a)\rho=(b\cdot b^{-1}b)\rho=b\rho$.

Clearly, the condition $ab^{-1}\in\text{ker}(\rho)$ from Theorem \ref{kernel-trace} is equivalent to the condition $a^{-1}b\in\text{ker}(\rho)$. In the light of Proposition \ref{ab=ba}, it is also equivalent to $b^{-1}a\in\text{ker}(\rho)$.
\end{remark}

\begin{theorem}\label{idempotent classes} Let $\rho_1,\rho_2$ be congruences on a completely inverse $AG^{**}$-groupoid $A$.~Then the following statements are equivalent$:$

$(a)$ \ $e\rho_1\subseteq e\rho_2$ for every $e\in E_A;$

$(b)$ \ $\rho_1\subseteq\rho_2$.

\noindent
In particular, every congruence $\rho$ on a completely inverse $AG^{**}$-groupoid is uniquely determined by the set of $\rho$-classes containing idempotents.
\end{theorem}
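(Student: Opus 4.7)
The plan is to reduce everything to Theorem \ref{kernel-trace}, which already encodes $\rho\subseteq\rho_2$ in terms of the two pieces of data (trace and kernel) that depend only on the $\rho_i$-classes of idempotents. The implication $(b)\implies(a)$ is immediate from the definition of a congruence class, so the only real content is $(a)\implies(b)$, together with the ``in particular'' remark.

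For $(a)\implies(b)$, I would fix $(a,b)\in\rho_1$ and set $e=bb^{-1}\in E_A$. Since every congruence on a completely inverse $AG^{**}$-groupoid preserves inverses (as observed right after Theorem \ref{i-s}) and is compatible with multiplication, I obtain both $(aa^{-1},bb^{-1})\in\rho_1$ and $(ab^{-1},bb^{-1})\in\rho_1$; that is, $aa^{-1}\in e\rho_1$ and $ab^{-1}\in e\rho_1$. Invoking the hypothesis $e\rho_1\subseteq e\rho_2$, both elements land in $e\rho_2$. This translates directly to $(aa^{-1},bb^{-1})\in\mathrm{tr}(\rho_2)$ and $ab^{-1}\in\ker(\rho_2)$, so Theorem \ref{kernel-trace} applied to $\rho_2$ gives $(a,b)\in\rho_2$, as required.

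For the final sentence, suppose $\rho_1$ and $\rho_2$ have the same family of $\rho$-classes that contain idempotents. Then for each $e\in E_A$ the class $e\rho_1$ is one of the $\rho_2$-classes containing an idempotent, and since $e\in e\rho_1$ this class must be $e\rho_2$; hence $e\rho_1=e\rho_2$. Applying the equivalence $(a)\Leftrightarrow(b)$ to the two inclusions $e\rho_1\subseteq e\rho_2$ and $e\rho_2\subseteq e\rho_1$ yields $\rho_1=\rho_2$.

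I do not expect any real obstacle: the only subtlety is recognizing that both the trace and the kernel of a congruence are, by their very definitions, determined by the collection of its idempotent classes, so Theorem \ref{kernel-trace} already does the heavy lifting. The argument is essentially a translation of that theorem into the language of idempotent classes.
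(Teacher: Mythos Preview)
Your proof is correct and essentially identical to the paper's: both pick $e=bb^{-1}$, note that $aa^{-1}$ and $ab^{-1}$ lie in $e\rho_1\subseteq e\rho_2$, and then invoke Theorem~\ref{kernel-trace} to conclude $(a,b)\in\rho_2$. The paper omits an explicit argument for the ``in particular'' sentence, which you spell out; otherwise there is no difference.
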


\begin{proof} $(a)\implies (b)$. Let $a\in b\rho_1$. Then
$$aa^{-1}\in (bb^{-1})\rho_1\subseteq (bb^{-1})\rho_2~~\&~~ab^{-1}\in (bb^{-1})\rho_1\subseteq (bb^{-1})\rho_2.$$
In the light of Theorem \ref{kernel-trace}, $a\in b\rho_2$, that is, $\rho_1\subseteq\rho_2$.

$(b)\implies (a)$. This is trivial.
\end{proof}

In Section $5$ we shall characterize abstractly the congruences on a completely inverse $AG^{**}$-groupoid $A$ via the congruence pairs for $A$.

A nonempty subset $B$ of a groupoid $A$ is called \emph{left} (\emph{right}) \emph{unitary} if $ba\in B$ (resp. $ab\in B$) implies $a \in B$ for every $b \in B,a\in A$. Also, we say that $B$ is \emph{unitary} if it is both left and right unitary. Finally, a groupoid $A$ is said to be \emph{$E$-unitary} if $E_A$ is unitary.

\begin{proposition}\label{E-unitary} Let $E_A$ be a left unitary subset of an $AG$-groupoid.~Then $E_A$ is also right unitary.~If in addition, $A$ is an $AG^{**}$-groupoid, then the following conditions are equivalent$:$

$(a)$ \ $A$ is $E$-unitary$;$

$(b)$ \ $E_A$ is left unitary$;$

$(c)$ \ $E_A$ is right unitary.
\end{proposition}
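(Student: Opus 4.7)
The plan is as follows. The proposition has two parts: a standalone implication valid in any $AG$-groupoid, and a three-way equivalence in the $AG^{**}$ setting. I would first prove the general implication, then close the equivalence by supplying the one remaining implication $(c)\Rightarrow(b)$ via an essentially symmetric argument.

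For the first statement, suppose $E_A$ is left unitary and let $ab\in E_A$ with $b\in E_A$. Applying the $AG$-law $(xy)z=(zy)x$ with $x=a$ and $y=z=b$, together with $b^{2}=b$, gives
\begin{equation*}
(ab)b=(bb)a=ba.
\end{equation*}
Since $E_A\neq\emptyset$, Section~2 yields $E_AE_A\subseteq E_A$, so $(ab)b\in E_A$; consequently $ba\in E_A$, and left unitarity forces $a\in E_A$. Thus $E_A$ is right unitary.

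For the equivalence, $(a)\Rightarrow(b)$ and $(a)\Rightarrow(c)$ are immediate from the definition of $E$-unitary, while the first statement already delivers $(b)\Rightarrow(c)$ and therefore $(b)\Rightarrow(a)$. The only remaining implication is $(c)\Rightarrow(b)$: assume $E_A$ is right unitary and let $ba\in E_A$ with $b\in E_A$. The same identity $(ab)b=(bb)a=ba$ shows $(ab)b\in E_A$; right unitarity applied to $(ab)\cdot b$ yields $ab\in E_A$, and a second application of right unitarity, now to $a\cdot b$, delivers $a\in E_A$. Hence $E_A$ is left unitary, and $(a)$ follows.

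There is no serious obstacle: both parts reduce to a single application of the $AG$-law to convert $(ab)b$ into $ba$ via the idempotent $b$. The mild subtlety is noticing that in the $(c)\Rightarrow(b)$ step one has to invoke right unitarity twice in succession—first to pull $ab$ out of the product $(ab)b$, and then to pull $a$ out of the product $ab$. I observe in passing that the $AG^{**}$ hypothesis does not seem to be used explicitly in the argument above; it appears to be inherited from the ambient setting of the section rather than from logical necessity.
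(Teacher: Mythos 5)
Your proof is correct. The first statement and the implication $(b)\Rightarrow(a)$ are handled essentially as in the paper: a single application of the left invertive law turns a product witnessing one unitarity condition into one witnessing the other (the paper computes $(ae)f=(fe)a$ with $f=ae$, you compute $(ab)b=(bb)a=ba$; these are the same idea). Where you genuinely diverge is the direction starting from $(c)$. The paper proves $(c)\Rightarrow(a)$ by writing $f=f(ea)=(fe)a=(ae)f$, and the middle step $f(ea)=(fe)a$ is exactly the identity \eqref{e*} of Proposition \ref{semilattice}, which requires the $AG^{**}$ hypothesis; it then applies right unitarity twice. You instead use only the $AG$-law, $(ab)b=(bb)a=ba$, to convert the left-unitary datum $ba\in E_A$ into $(ab)b\in E_A$, and then apply right unitarity twice. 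Your observation that the $AG^{**}$ assumption is never used is therefore accurate: your argument establishes the equivalence of left and right unitarity of $E_A$ (hence of all three conditions) in an arbitrary $AG$-groupoid with $E_A\neq\emptyset$, which is a mild strengthening of the proposition as stated. The only cost is the double invocation of right unitarity, which you correctly flag; both applications are legitimate since $b\in E_A$ sits on the right in both $(ab)\cdot b$ and $a\cdot b$.
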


\begin{proof} $(a)\implies (b),(c)$. Obvious.

$(b)\implies (a)$. Let $a\in A,e\in E_A$ and let $ae=f\in E_A$. Then $(ae)f\in E_A$, therefore, $(fe)a\in E_A$. Thus $a\in E_A$, since $fe\in E_A$ and $E_A$ is left unitary.

$(c)\implies (a)$. Let $a\in A,e\in E_A$ and $ea=f\in E_A$. Then, using $(3)$,
$$f=f(ea)=(fe)a=(ae)f.$$
Hence $ae\in E_A$.
Thus $a\in E_A$.
\end{proof}

$AG$-groups are examples of $E$-unitary completely inverse $AG^{**}$-groupoids.

A congruence $\rho$ on a completely inverse $AG^{**}$-groupoid is  a \emph{$AG$-group} congruence if $A/\rho$ is an $AG$-group.~By Lemma \ref{unipotent}, $\rho$ is an $AG$-group congruence if and only if tr$(\rho)=E_A\times E_A$.~Since $A\times A$ is an $AG$-group congruence on $A$, then the intersection of all the $AG$-group congruences on $A$ is the least $AG$-group congruence on $A$.

A more useful characterization of the least $AG$-group congruence on $A$ is given in the following theorem.

\begin{theorem}\label{sigma} In any completely inverse $AG^{**}$-groupoid $A$,
$$
\sigma=\{(a,b)\in A\times A:(\exists~e\in E_A)\,ea=eb\}
$$
is the least $AG$-group congruence with the kernel $E_A\omega$.
\end{theorem}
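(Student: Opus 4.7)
The plan is to verify in sequence that $\sigma$ is an equivalence relation, a congruence, an $AG$-group congruence, the least $AG$-group congruence, and finally that its kernel equals $E_A\omega$. Reflexivity follows by taking $e = aa^{-1}$, and symmetry is immediate. For transitivity, given witnesses $e,f \in E_A$ with $ea = eb$ and $fb = fc$, I will use $ef$ to witness $a\,\sigma\,c$ via the chain
$$(ef)a = e(fa) = f(ea) = f(eb) = e(fb) = e(fc) = (ef)c,$$
where the outer equalities use the identity $(\ref{e*})$ of Proposition \ref{semilattice} and the interior equalities alternate applications of the $AG^{**}$-identity $x(yz)=y(xz)$ with the two hypotheses. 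For right compatibility, the medial law gives $e(ac) = (ee)(ac) = (ea)(ec) = (eb)(ec) = e(bc)$; for left compatibility, the $AG^{**}$-identity yields $e(ca) = c(ea) = c(eb) = e(cb)$.

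Once $\sigma$ is known to be a congruence, $A/\sigma$ is itself a completely inverse $AG^{**}$-groupoid by the remark preceding Theorem \ref{mu}. To show it is an $AG$-group, Lemma \ref{unipotent} reduces the task to exhibiting a single idempotent; for any $e,f \in E_A$, working in the semilattice $E_A$ gives $(ef)e = ef = (ef)f$, so $ef$ witnesses $e\,\sigma\,f$. For minimality, let $\rho$ be any $AG$-group congruence, so $A/\rho$ has a unique idempotent, which by the remark following Proposition \ref{AG-groups} coincides with the left identity $e\rho$ for every $e \in E_A$. If $(a,b) \in \sigma$ with $ea = eb$, passing to $A/\rho$ gives $(e\rho)(a\rho) = (e\rho)(b\rho)$, and because $e\rho$ is the left identity of $A/\rho$ this forces $a\rho = b\rho$; hence $\sigma \subseteq \rho$.

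For the kernel, if $a \in E_A\omega$ then some $e \in E_A$ satisfies $ea = f \in E_A$. Proposition \ref{semilattice} gives $ef = e(ea) = (ee)a = ea = f$, so $ea = ef$, meaning $e$ witnesses $(a,f) \in \sigma$ and thus $a \in \ker(\sigma)$. Conversely, if $a \in \ker(\sigma)$, then $(a,f) \in \sigma$ for some $f \in E_A$, i.e., $ga = gf$ for some $g \in E_A$; since $gf \in E_A$ we obtain $ga \in E_A$, so $a \in E_A\omega$. The main obstacle is the transitivity chain, whose success hinges on interleaving the $AG^{**}$-identity with the consequence $(\ref{e*})$ of Proposition \ref{semilattice} in precisely the right pattern to migrate each idempotent past the other.
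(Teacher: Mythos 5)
Your proof is correct and follows essentially the same route as the paper's: verify that $\sigma$ is a congruence using Proposition \ref{semilattice} and the medial law, show all idempotents are $\sigma$-related so that Lemma \ref{unipotent} (with idempotent-surjectivity) gives an $AG$-group quotient, obtain minimality by cancelling the left identity $e\rho$ in $A/\rho$, and identify $\ker(\sigma)$ with $E_A\omega$. The only (harmless) deviation is your left-compatibility step, where you keep the witness $e$ and apply the $AG^{**}$-identity directly, rather than the paper's choice of the witness $(cc^{-1})e$ together with the medial law.
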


\begin{proof} It is evident that $\sigma$ is reflexive and symmetric. Let $(a,b),(b,c)\in\sigma$, so that $ea=eb$ and $fb=fc$ for some $e,f\in E_A$. Using Proposition \ref{semilattice}, we have that
$$
(fe)a=f(ea)=f(eb)=(fe)b=(ef)b=e(fb)=e(fc)=(ef)c=(fe)c,
$$
where $fe\in E_A$. Thus $(a,c)\in\sigma$. Consequently, $\sigma$ is an equivalence relation on $A$.~Further, let $(a,b)\in\sigma$, that is, $ea=eb$, where $e\in E_A$, and let $c\in A$. Then again in the light of Proposition \ref{semilattice}, $e(ac)=(ea)c=(eb)c=e(bc)$. Also,
$$(cc^{-1})e\cdot ca=(cc^{-1})c \cdot ea =(cc^{-1})c \cdot eb=(cc^{-1})e \cdot cb,$$
where $(cc^{-1})e\in E_A$. Hence $\sigma$ is a congruence on $A$. Since $(ef)e=(ef)f$ and $ef\in E_A$ for all $e,f\in E_A$, then $\sigma$ is an $AG$-group congruence on $A$. Also, let $\rho$ be an $AG$-group congruence on $A$ and $(a,b)\in\sigma$. Then $ea=eb$, where $e\in E_A$, so $(e\rho)(a\rho)=(e\rho)(b\rho)$. Hence $a\rho=b\rho$, since $e\rho$ is a left identity of the $AG$-group $A/\rho$. Thus $\sigma\subseteq\rho$. Consequently, $\sigma$ is the least $AG$-group congruence on $A$. Finally,
$$
a\in\text{ker}(\sigma)\Longleftrightarrow (\exists\,f\in E_A)~(a,f)\in\sigma\Longleftrightarrow (\exists\,e,f\in E_A)~ea=ef\Longleftrightarrow a\in E_A\omega,
$$
as required.
\end{proof}

From Theorem \ref{kernel-trace} follows that $(a,b)\in\sigma\Leftrightarrow ab^{-1}\in E_A\omega$. Also, in the light of the end of Section $3$, $E_A\omega$ is a closed completely inverse $AG^{**}$-subgroupoid of $A$. Evidently, $E_A\subseteq E_A\omega$ and if $ab\in E_A\omega$, then $ba\in E_A\omega$.

\medskip
A nonempty subset $B$ of a completely inverse $AG^{**}$-groupoid $A$ is called:

$(F)$ \ \emph{full} if $E_A\subseteq B$;

$(S)$ \ \emph{symmetric} if $xy\in B$ implies $yx\in B$ for all $x,y\in A$.

\medskip\noindent
 A completely inverse $AG^{**}$-subgroupoid $N$ of $A$ is said to be \emph{normal} if it full, closed and symmetric. In that case, we shall write $N\lhd A$.

Denote the set of all $AG$-group congruences on a completely inverse $AG^{**}$-group\-oid $A$ by $\mathcal{GC}(A)$. It is clear that $\mathcal{GC}(A)=[\sigma,A\times A]$ is a complete sublattice of $\mathcal{C}(A)$. Note that $\mathcal{GC}(A)\cong\mathcal{C}(A/\sigma)$ and so the lattice $\mathcal{GC}(A)$ is modular (by Corollary \ref{C1}).  Further, let $\mathcal{N}(A)$ be the set of all normal completely inverse $AG^{**}$-subgroupoids of $A$.~It is obvious that $E_A\omega\subseteq N$ for every $N\lhd A$, and if $\emptyset\neq\mathcal{F}\subseteq\mathcal{N}(A)$, then $\bigcap \{B\,:\,B\in\mathcal{F}\}\in\mathcal{N}(A)$. Consequently, $\mathcal{N}(A)$ is a complete lattice.

The following theorem (proved in \cite{P1}) describes the $AG$-group congruences on a completely inverse $AG^{**}$-groupoid in the terms of its normal completely inverse $AG^{**}$-subgroupoids.

\begin{theorem}\label{AG-group congruences} Let $A$ be a completely inverse $AG^{**}$-groupoid, $N\lhd A$.~Then the relation
$$
\rho_N=\{(a,b)\in A\times A:ab^{-1}\in N\}
$$
is the unique $AG$-group congruence $\rho$ on $A$ for which $\ker(\rho)=N$.

Conversely, if $\rho\in\mathcal{GC}(A)$, then $\ker(\rho)\in\mathcal{N}(A)$ and $\rho=\rho_N$ for $N=\ker(\rho)$.

Consequently, the map $\phi:\mathcal{N}(A)\to\mathcal{GC}(A)$ given by $N\phi=\rho_N$ $(N\in\mathcal{N}(A))$ is a complete lattice isomorphism of $\mathcal{N}(A)$ onto $\mathcal{GC}(A)$.~In particular, the lattice  $\mathcal{N}(A)$ is modular.\qed
\end{theorem}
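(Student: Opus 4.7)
The plan is to proceed in three stages. First, verify that $\rho_N$ is an $AG$-group congruence with $\ker(\rho_N)=N$. Second, show conversely that for every $\rho\in\mathcal{GC}(A)$, $\ker(\rho)\in\mathcal{N}(A)$ and $\rho=\rho_{\ker(\rho)}$; this will also secure the uniqueness claim from the first stage. Third, assemble these into the lattice-isomorphism statement, transferring modularity from $\mathcal{C}(A/\sigma)$ via Corollary~\ref{C1}.

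For the first stage, reflexivity of $\rho_N$ follows from fullness ($aa^{-1}\in E_A\subseteq N$), and symmetry combines closure of $N$ under inverses with symmetry of $N$ via $(ab^{-1})^{-1}=a^{-1}b$. Transitivity is the key calculation: starting from $ab^{-1},bc^{-1}\in N$, I would first pass to $a^{-1}b$ and $cb^{-1}$ in $N$ using inverse-closure and symmetry, then form $(a^{-1}b)(cb^{-1})\in N$, and apply the paramedial law~(\ref{paramedial}) to rewrite this product as $(bb^{-1})(ca^{-1})$; since the left factor is idempotent, this witnesses $(bb^{-1})(ca^{-1})\leq ca^{-1}$, so closure of $N$ forces $ca^{-1}\in N$, whence a further inverse-and-symmetry step yields $ac^{-1}\in N$. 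Compatibility with multiplication is a short medial calculation: $(ca)(cb)^{-1}=(cc^{-1})(ab^{-1})$ and $(ac)(bc)^{-1}=(ab^{-1})(cc^{-1})$ both lie in $N$ whenever $ab^{-1}\in N$. Since $ef^{-1}=ef\in E_A\subseteq N$ for all $e,f\in E_A$, all idempotents collapse in $A/\rho_N$, so by Lemma~\ref{unipotent} the quotient is an $AG$-group. The equality $\ker(\rho_N)=N$ then reduces to two direct checks: if $a\in N$, then $a\cdot aa^{-1}\in N$ witnesses $(a,aa^{-1})\in\rho_N$; and if $(a,e)\in\rho_N$ with $e\in E_A$, then $ae\in N$, and the left invertive law gives $(ae)e=ea\in N$ with $ea\leq a$, so closure delivers $a\in N$.

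For the second stage, fix $\rho\in\mathcal{GC}(A)$ and set $N=\ker(\rho)$. Fullness, closure under the groupoid operation, and closure under inverses are routine from the definition of the kernel. Symmetry is obtained by passing to the $AG$-group $A/\rho$: if $ab\in N$ then $(ab)\rho\in E_{A/\rho}$, so Proposition~\ref{ab=ba} applied in $A/\rho$ gives $(ab)\rho=(ba)\rho$, hence $ba\in N$. For closure, any $a=fb\in N$ with $f\in E_A$ satisfies $b\rho=(fb)\rho=a\rho\in E_{A/\rho}$, since $f\rho$ is the unique idempotent of the $AG$-group $A/\rho$ and therefore a left identity; thus $b\in N$. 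Hence $N\in\mathcal{N}(A)$, and since both $\rho$ and $\rho_N$ are $AG$-group congruences with trace $E_A\times E_A$ and kernel $N$, Theorem~\ref{kernel-trace} forces $\rho=\rho_N$; this simultaneously yields the uniqueness claim from the first stage. The third stage is then formal: $\phi$ is a bijection between complete lattices that is monotone in both directions by inspection, hence a complete lattice isomorphism, and $\mathcal{N}(A)$ inherits modularity from $\mathcal{GC}(A)\cong\mathcal{C}(A/\sigma)$, which is modular by Corollary~\ref{C1}. I expect the main obstacle to be the transitivity step, since this is the single place where all four ``normal'' hypotheses on $N$---full, symmetric, inverse-closed, and closed---must cooperate through a careful rewriting via the paramedial and left invertive laws.
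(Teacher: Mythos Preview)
The paper does not actually prove this theorem: it is stated with a \qed and an attribution to \cite{P1}, so there is no in-paper argument to compare against. Your plan is correct and self-contained within the paper's framework; each step checks out, including the transitivity calculation (the paramedial rewrite $(a^{-1}b)(cb^{-1})=(b^{-1}b)(ca^{-1})$ followed by closure of $N$), the compatibility via the medial law, the identification of $\ker(\rho_N)$ with $N$ using closure, and the converse direction via Proposition~\ref{ab=ba} and the fact that in the $AG$-group $A/\rho$ every idempotent is the left identity. Your appeal to Theorem~\ref{kernel-trace} for uniqueness is legitimate since that result precedes the present theorem, and the final lattice statement is exactly the argument the paper sketches just before the theorem ($\mathcal{GC}(A)\cong\mathcal{C}(A/\sigma)$, modular by Corollary~\ref{C1}). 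The only cosmetic point: in your closure paragraph the roles of $a$ and $b$ are momentarily swapped relative to the usual convention (you take $a=fb\in N$ and conclude $b\in N$), but the logic is sound.
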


We say that a congruence $\rho$ on a groupoid $A$ is \emph{idempotent pure} if $e\rho\subseteq E_A$ for all $e\in E_A$. Notice that any idempotent pure congruence $\rho$ on an arbitrary completely inverse $AG^{**}$-groupoid $A$ is contained in $\sigma$. Indeed, if $(a,b)\in\rho$, then $(ab^{-1},bb^{-1})\in\rho$, so $ab^{-1}\in E_A\subseteq E_A\omega$. Thus $(a,b)\in\sigma$, as required.

The following theorem gives necessary and sufficient conditions for a completely inverse $AG^{**}$-groupoid to be $E$-unitary.

\begin{theorem}\label{E-unitary2} Let $A=[E_A;G_e;\phi_{e,f}]$ be a completely inverse $AG^{**}$-groupoid. Then the following conditions are equivalent$:$

$(a)$ \ $A$ is $E$-unitary$;$

$(b)$ \ $\ker(\sigma)=E_A;$

$(c)$ \ $\sigma$ is the maximum idempotent pure congruence on $A;$

$(d)$ \ $\sigma\cap\mu=1_A;$

$(e)$ \ $\phi_{e,f}$ is a monomorphism for all $e,f\in E_A$ such that $e\geq f$.
\end{theorem}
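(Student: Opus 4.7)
The plan is to prove the cycle (a) $\Leftrightarrow$ (b) $\Leftrightarrow$ (c) $\Rightarrow$ (d) $\Rightarrow$ (e) $\Rightarrow$ (a). The equivalence of (a), (b), (c) unfolds largely by definition: Theorem \ref{sigma} identifies $\ker(\sigma)$ with $E_A\omega$, so (b) amounts to $E_A\omega=E_A$, which says that $ea\in E_A$ (for $e\in E_A$) forces $a\in E_A$, i.e., $E_A$ is left unitary; Proposition \ref{E-unitary} then gives (a). For (b) $\Leftrightarrow$ (c), the identity $\ker(\sigma)=\bigcup_{e\in E_A}e\sigma$ together with the trivial inclusion $E_A\subseteq\ker(\sigma)$ shows that $\ker(\sigma)=E_A$ is equivalent to $\sigma$ being idempotent pure; the remark preceding the theorem (that every idempotent pure congruence on $A$ is contained in $\sigma$) then promotes this to maximality.

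For (c) $\Rightarrow$ (d), I would take $(a,b)\in\sigma\cap\mu$. Theorem \ref{kernel-trace} yields $ab^{-1}\in\ker(\sigma)=E_A$ together with $aa^{-1}=bb^{-1}$; the $AG$-identity $(xy)z=(zy)x$ combined with $(aa^{-1})a=a$ gives
$$(ab^{-1})b=(bb^{-1})a=(aa^{-1})a=a,$$
so $a\leq b$ with witness $ab^{-1}\in E_A$, and the preceding unlabeled proposition ``$\leq\,\cap\,\mu=1_A$'' immediately forces $a=b$. For (d) $\Rightarrow$ (e), take $e\geq f$ in $E_A$ and $a,b\in G_e$ with $a\phi_{e,f}=b\phi_{e,f}$, i.e., $fa=fb$; then $(a,b)\in\sigma$ by the definition of $\sigma$ and $(a,b)\in\mu$ since $aa^{-1}=e=bb^{-1}$, so (d) yields $a=b$, i.e., $\phi_{e,f}$ is a monomorphism.

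Finally, for (e) $\Rightarrow$ (a): suppose $ea\in E_A$ with $e\in E_A$, and let $a\in G_g$. Then $ea\in G_gG_e\subseteq G_{ge}$, combined with $ea\in E_A$, forces $ea=ge$ (the unique idempotent of $G_{ge}$). In $E_A$ one has $g\geq ge$, so $\phi_{g,ge}$ is defined, and two applications of Proposition \ref{semilattice} give
$$a\phi_{g,ge}=(ge)a=g(ea)=g(ge)=ge=g\phi_{g,ge};$$
injectivity of $\phi_{g,ge}$ then forces $a=g\in E_A$, so $E_A$ is left unitary and Proposition \ref{E-unitary} delivers (a). The main technical obstacle is this last implication: one must locate the appropriate monomorphism $\phi_{g,ge}$ (note that $e$ and the ``home'' idempotent $g$ of $a$ need not be comparable in $E_A$) and then carry out the short Proposition~\ref{semilattice} calculation showing that $a$ and $g$ share the same image $ge$ in $G_{ge}$.
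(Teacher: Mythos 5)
Your proof is correct and follows essentially the same chain of implications as the paper: $(a)\Leftrightarrow(b)\Leftrightarrow(c)\Rightarrow(d)\Rightarrow(e)\Rightarrow(a)$, with $(a)\Leftrightarrow(b)$ via $\ker(\sigma)=E_A\omega$ and Proposition \ref{E-unitary}, and the same $\phi_{g,ge}$ computation for $(e)\Rightarrow(a)$ (in fact stated more carefully than in the paper, which has a typo there). The only cosmetic difference is in $(c)\Rightarrow(d)$, where the paper deduces $\sigma\cap\mu=1_A$ directly from the kernel--trace description (Theorem \ref{kernel-trace}) while you route through $\leq\cap\,\mu=1_A$; both are immediate.
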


\begin{proof} In view of Proposition \ref{E-unitary}, $(a)$ and $(b)$ are equivalent, since $\ker(\sigma)=E_A\omega$.

$(b)\implies (c)$. This follows from the preceding remark.

$(c)\implies (d)$. Indeed, tr$(\sigma\cap\mu)\subseteq\text{tr}(\mu)=1_{E_A}$ (by Theorem \ref{mu}$(c)$). Furthermore, ker$(\sigma\cap\mu)\subseteq\text{ker}(\sigma)=E_A$. In the light of Theorem \ref{kernel-trace}, $\sigma\cap\mu=1_A$.

$(d)\implies (e)$. Let $a_e,b_e\in G_e$ be such that $a_e\phi_{e,f}=b_e\phi_{e,f}$.~Then $fa_e=fb_e$, therefore, $(a_e,b_e)\in\sigma$. Since clearly $(a_e,b_e)\in\mu$, then $a_e=b_e$.

$(e)\implies (a)$. Let $a_f\in G_f$ be such that $ea_f=g$ ($e,f,g\in E_A$). Then $ef=g$. Hence $eg=g$, that is, $e\geq g$, so $a_f\phi_{e,g}=g\phi_{e,g}$, therefore, $a_f=g\in E_A$. Thus $A$ is $E$-unitary (by Proposition \ref{E-unitary}).
\end{proof}

Let $\rho, \upsilon$ be congruences on $A$ such that $\rho \subseteq \upsilon$.~Then the map $\Phi:A/\rho \rightarrow A/\upsilon,$ where $(a\rho)\Phi=a\upsilon$ for every $a\in A$, is a well-defined epimorphism between these groupoids. Denote its kernel by
$$
\upsilon /\rho = \{(a\rho, b\rho)\in A/\rho \times A/\rho: (a,b)\in\upsilon \}.
$$
Then $(A/\rho)/(\upsilon /\rho) \cong A/\upsilon$. Moreover, every congruence $\alpha$ on $A/\rho$ is of the form  $\upsilon/\rho$, where $\upsilon \supseteq \rho$ is a congruence on $A$. Indeed, the relation $\upsilon$, defined  on $A$ by: $(a, b)\in \upsilon$ if and only if $(a\rho, b\rho)\in \alpha$, is a congruence on $A$ such that  $\rho \subseteq \upsilon$ and $\alpha = \upsilon /\rho$.

We are able now to determine all $E$-unitary congruences on any completely inverse $AG^{**}$-groupoid.

\begin{theorem}\label{E-unitary congruences} The intersection of an $AG$-group congruence and a semilattice congruence on a completely inverse $AG^{**}$-groupoid $A$ is an $E$-unitary congruence on $A$.
Moreover, any $E$-unitary congruence on a completely inverse $AG^{**}$-groupoid $A$ can be expressed uniquely in this way.
\end{theorem}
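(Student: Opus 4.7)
The plan is to prove the two directions separately: the forward direction (the intersection is $E$-unitary) and the backward direction (every $E$-unitary congruence admits such a decomposition, uniquely). Uniqueness will fall out essentially for free from the kernel--trace characterisation (Theorem~\ref{kernel-trace}), so I expect the forward direction to be the genuinely delicate step.

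For the forward direction, let $\rho = \alpha \cap \beta$ with $\alpha$ an $AG$-group congruence and $\beta$ a semilattice congruence. I would verify that $A/\rho$ is $E$-unitary by means of Proposition~\ref{E-unitary}: suppose $e \in E_A$ and $(ea, f) \in \rho$ for some $f \in E_A$; it suffices to produce an idempotent in $a\rho$. Passing to the $AG$-group $A/\alpha$, whose unique idempotent is its left identity, the relation $(e\alpha)(a\alpha) = f\alpha$ combined with $e\alpha = f\alpha$ cancels to $a\alpha = f\alpha$, giving $(a, f) \in \alpha$. In the semilattice $A/\beta$ every element is idempotent, so Theorem~\ref{i-s} applied to $\beta$ produces some $g \in E_A$ with $(a, g) \in \beta$. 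Because $\alpha$ collapses all idempotents of $A$ into one $\alpha$-class, $(f, g) \in \alpha$, whence $(a, g) \in \alpha$ as well; combining, $(a, g) \in \alpha \cap \beta = \rho$, so $a\rho$ contains the idempotent $g$.

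For the backward direction, given an $E$-unitary congruence $\rho$, set $\alpha = \rho \vee \sigma$ and $\beta = \rho \vee \mu$. These are, respectively, the least $AG$-group congruence and the least semilattice congruence on $A$ containing $\rho$; under the canonical isomorphism $[\rho, A \times A] \cong \mathcal{C}(A/\rho)$ they correspond to $\sigma_{A/\rho}$ and $\mu_{A/\rho}$. Since $A/\rho$ is $E$-unitary, Theorem~\ref{E-unitary2} yields $\sigma_{A/\rho} \cap \mu_{A/\rho} = 1_{A/\rho}$, which pulls back to $\alpha \cap \beta = \rho$. For uniqueness, any decomposition $\rho = \alpha \cap \beta$ forces $\mathrm{tr}(\rho) = \mathrm{tr}(\alpha) \cap \mathrm{tr}(\beta)$ and $\ker(\rho) = \ker(\alpha) \cap \ker(\beta)$. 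But an $AG$-group congruence automatically has trace $E_A \times E_A$ and a semilattice congruence automatically has kernel $A$, so these equations force $\mathrm{tr}(\beta) = \mathrm{tr}(\rho)$ and $\ker(\alpha) = \ker(\rho)$. Together with $\mathrm{tr}(\alpha) = E_A \times E_A$ and $\ker(\beta) = A$, Theorem~\ref{kernel-trace} then pins $\alpha$ and $\beta$ down uniquely.
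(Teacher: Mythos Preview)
Your argument is correct throughout, and in two places it diverges from the paper's proof in instructive ways.

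For the forward direction, the paper does not chase elements: it observes that if $\rho=\rho_N\cap\upsilon$ then $\rho_N/\rho$ and $\upsilon/\rho$ are respectively an $AG$-group congruence and a semilattice congruence on $A/\rho$ whose intersection is $1_{A/\rho}$; hence $\sigma_{A/\rho}\cap\mu_{A/\rho}=1_{A/\rho}$, and Theorem~\ref{E-unitary2}(d) gives $E$-unitarity. Your approach instead verifies left unitarity of $E_{A/\rho}$ directly via Proposition~\ref{E-unitary} and Theorem~\ref{i-s}. Both are short; the paper's route is more structural (it is the exact mirror of the backward direction, which you and the paper handle identically), while yours is self-contained and avoids invoking Theorem~\ref{E-unitary2}.

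For uniqueness, your kernel--trace argument is considerably cleaner than the paper's. The paper argues by a somewhat intricate element-chase: given two decompositions $\rho=\rho_{N_1}\cap\upsilon_1=\rho_{N_2}\cap\upsilon_2$, it shows $\upsilon_1\subseteq\upsilon_2$ by writing $(a,b)\in\upsilon_1$ as a chain through idempotents and using that $\upsilon_1\cap\rho_{N_1}=\upsilon_2\cap\rho_{N_2}$, then uses cancellation in $A/\rho_{N_i}$ to show $\rho_{N_1}=\rho_{N_2}$. Your observation that $\mathrm{tr}(\alpha)=E_A\times E_A$ and $\ker(\beta)=A$ force $\ker(\alpha)=\ker(\rho)$ and $\mathrm{tr}(\beta)=\mathrm{tr}(\rho)$, after which Theorem~\ref{kernel-trace} finishes, is a genuine simplification.
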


\begin{proof} Let $\rho_N$ be an $AG$-group congruence ($N\lhd A$) and $\upsilon$ be a semilattice congruence on $A$. Put for simplicity $\rho=\rho_N\cap\upsilon$, and observe that $\rho_N/\rho$ is an $AG$-group congruence on $A/\rho$ and $\upsilon/\rho$ is a semilattice congruence on $A/\rho$. Since $\rho_N/\rho\cap\upsilon/\rho=1_{A/\rho}$, then $\sigma_{A/\rho}\cap\mu_{A/\rho}=1_{A/\rho}$ (see Theorem \ref{mu}$(a)$). In the light of Theorem \ref{E-unitary2}, $\rho$ is an $E$-unitary congruence on $A$.

Conversely, let $\rho$ be an $E$-unitary congruence on $A$, $\rho_N/\rho=\sigma_{A/\rho}$ and let $\upsilon/\rho=\mu_{A/\rho}$, where $\rho\subseteq\rho_N,\upsilon$. Then $\rho_N$ is an $AG$-group congruence and $\upsilon$ is a semilattice congruence on $A$. Also, $(\rho_N\cap\upsilon)/\rho=\sigma_{A/\rho}\cap\mu_{A/\rho}=1_{A/\rho}$ (again by Theorem \ref{E-unitary2}). Thus $\rho=\rho_N\cap\upsilon$, as required.

Finally, let $\rho=\rho_{N_1}\cap\upsilon_1=\rho_{N_2}\cap\upsilon_2$, where $N_i \lhd A$ and $\upsilon_i$ is a semilattice congruence on $A$ ($i = 1, 2$). Let $(a, b) \in \upsilon_1$.~Since $\upsilon_1 \cap \upsilon_2$ is a semilattice congruence on $A$, then there exists $e,f\in E_A$ such that $(a, e)\in\upsilon_1\cap\upsilon_2,$ $(e, f)\in\rho_{N_1},(f, b)\in\upsilon_1\cap\upsilon_2$ (Theorem \ref{i-s}), so $(e, f)\in\upsilon_1\cap\rho_{N_1} =\upsilon_2\cap\rho_{N_2}\subseteq\upsilon_2$. Hence $(a, b)\in\upsilon_2$, i.e., $\upsilon_1\subseteq\upsilon_2$. By symmetry, we deduce that $\upsilon_1 = \upsilon_2$. Put $\upsilon_1 = \upsilon_2 = \upsilon$, so that  $\rho=\rho_{N_1}\cap\upsilon=\rho_{N_2}\cap\upsilon$. If $(a, b)\in\rho_{N_1}$, then $(aab, abb)\in \upsilon\cap\rho_{N_1}\subseteq\rho_{N_2}$, therefore, $(a, b)\in\rho_{N_2}$ (by cancellation).~Hence $\rho_{N_1}\subseteq\rho_{N_2}$. By symmetry,  $\rho_{N_2}\subseteq\rho_{N_1}$. Thus  $\rho_{N_1}=\rho_{N_2}$, as  required.
\end{proof}

\begin{corollary}\label{C2} In any completely inverse $AG^{**}$-groupoid $A$, the relation
$$\pi=\sigma\cap\mu$$
is the least $E$-unitary congruence on $A$.
\end{corollary}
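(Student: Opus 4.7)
The plan is to read off both ``$E$-unitary'' and ``least'' directly from Theorem~\ref{E-unitary congruences} combined with the minimality properties of $\sigma$ (Theorem~\ref{sigma}) and $\mu$ (Theorem~\ref{mu}$(a)$). This is a corollary in the strict sense: once the structural decomposition of $E$-unitary congruences as intersections $\rho_N \cap \upsilon$ (AG-group $\cap$ semilattice) is in hand, there is essentially no new content to prove.

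First, I would observe that $\sigma$ is an $AG$-group congruence on $A$ by Theorem~\ref{sigma}, and that $\mu$ is a semilattice congruence on $A$ by Theorem~\ref{mu}$(a)$. The first half of Theorem~\ref{E-unitary congruences} asserts exactly that the intersection of an $AG$-group congruence and a semilattice congruence is $E$-unitary, so $\pi = \sigma \cap \mu$ is an $E$-unitary congruence.

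For minimality, let $\rho$ be an arbitrary $E$-unitary congruence on $A$. The second half of Theorem~\ref{E-unitary congruences} writes $\rho = \rho_N \cap \upsilon$ for some $AG$-group congruence $\rho_N$ and some semilattice congruence $\upsilon$. Since $\sigma$ is the least $AG$-group congruence on $A$ (Theorem~\ref{sigma}) we have $\sigma \subseteq \rho_N$, and since $\mu$ is the least semilattice congruence on $A$ (Theorem~\ref{mu}$(a)$) we have $\mu \subseteq \upsilon$. Intersecting these inclusions yields $\pi = \sigma \cap \mu \subseteq \rho_N \cap \upsilon = \rho$, which completes the argument.

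There is no real obstacle; the only thing to watch is that the two minimality statements invoked are the right ones (least among \emph{AG-group} congruences for $\sigma$, and least among \emph{semilattice} congruences for $\mu$), and that Theorem~\ref{E-unitary congruences} is applied once to produce an $E$-unitary congruence (forward direction) and once to decompose a given one (converse direction).
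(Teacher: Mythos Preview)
Your argument is correct and is exactly the intended one: the paper states this result as an immediate corollary of Theorem~\ref{E-unitary congruences}, and your two-step use of that theorem (forward direction to get $E$-unitarity of $\sigma\cap\mu$, converse direction plus minimality of $\sigma$ and $\mu$ to get leastness) is precisely how it is meant to be read off.
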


Observe that if $\rho$ is an $E$-unitary congruence on $A$, then ker$(\rho)=\text{ker}(\rho_N$) for some $N\lhd A$.~In the last section we will show that the converse implication is also true, that is, for any $AG$-group congruence $\rho_N$ on $A$ ($N\lhd A$), the family
$$\mathcal{U}_{N}=\{\rho_N\cap\upsilon:\mu\subseteq\upsilon\}$$
coincides with the set of all ($E$-unitary) congruence $\rho$ on $A$ such that
$$\text{ker}(\rho)=\text{ker}(\rho_N).$$

Finally, denote by $\mathcal{U}(A)$ the set of all $E$-unitary congruences  on a completely inverse $AG^{**}$-groupoid $A$. Since the intersection of an arbitrary nonempty family of $E$-unitary congruences on $A$ is again an $E$-unitary congruence on $A$, and  $\mathcal{U}(A)$ has a least element, then the following corollary is valid.

\begin{corollary}\label{C3} Let $A$ be a completely inverse $AG^{**}$-groupoid.~Then the set\hspace{0.3mm} $\mathcal{U}(A)$ is a complete $\cap$-sublattice of $\mathcal{C}(A)$ with the least element $\pi$ and the greatest element $A\times A$.

Moreover, $\mathcal{U}_{N}=\{\rho_N\cap\upsilon:\mu\subseteq\upsilon\}$ $(N\lhd A)$ is a complete sublattice of $\mathcal{U}(A)$ with the least element $\rho_N\cap\mu$ and the greatest element $\rho_N$.
\end{corollary}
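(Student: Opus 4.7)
The first assertion is essentially a bookkeeping exercise built on Theorem~\ref{E-unitary congruences} and Corollary~\ref{C2}. The plan is to observe first that $A\times A\in\mathcal{U}(A)$ (its quotient is trivial) and that Corollary~\ref{C2} supplies $\pi=\sigma\cap\mu$ as the least element. The only substantive step is closure of $\mathcal{U}(A)$ under arbitrary nonempty intersection: writing each member of a family $\{\rho_i\}_{i\in I}\subseteq\mathcal{U}(A)$ as $\rho_i=\rho_{N_i}\cap\upsilon_i$ via Theorem~\ref{E-unitary congruences}, I would regroup as $\bigcap_i\rho_i=\bigl(\bigcap_i\rho_{N_i}\bigr)\cap\bigl(\bigcap_i\upsilon_i\bigr)$. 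The first factor is still an $AG$-group congruence because its trace remains $E_A\times E_A$, and the second is still a semilattice congruence because the class of semilattice congruences is the filter $[\mu,A\times A]$ in $\mathcal{C}(A)$ (Theorem~\ref{mu}(a)); a second application of Theorem~\ref{E-unitary congruences} then certifies the intersection as $E$-unitary.

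For $\mathcal{U}_N$ the endpoints emerge by specialising $\upsilon=A\times A$ and $\upsilon=\mu$ in the defining formula, and the inclusion $\mu\subseteq\upsilon$ squeezes every other member between them. Meet closure is immediate from $\bigcap_i(\rho_N\cap\upsilon_i)=\rho_N\cap(\bigcap_i\upsilon_i)$, since $\bigcap_i\upsilon_i$ still contains $\mu$. To promote this to a complete sublattice of $\mathcal{U}(A)$, the cleanest strategy is to identify $\mathcal{U}_N$ with the full interval $[\rho_N\cap\mu,\rho_N]$ of $\mathcal{U}(A)$. Given $\alpha\in\mathcal{U}(A)$ between these bounds, factor $\alpha=\rho_{N'}\cap\upsilon'$ by Theorem~\ref{E-unitary congruences}; a short medial-law computation yields $(a,a^2)\in\mu$ for every $a\in A$, so $\ker(\upsilon')=A$ and consequently $\ker(\alpha)=N'$. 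Monotonicity of $\ker$, together with $\ker(\rho_N\cap\mu)=\ker(\rho_N)=N$, forces $N'=N$, so $\alpha\in\mathcal{U}_N$. Any interval in a complete lattice is automatically a complete sublattice of it, finishing the argument.

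I expect the interval identification to be the sole delicate point. It borrows a kernel computation of the flavour that will be developed in Section~6, namely $\ker(\rho_{N'}\cap\upsilon')=N'$. If one reads ``complete sublattice'' in the looser sense of ``a subset that is itself a complete lattice whose meets coincide with those of the ambient,'' this step can be skipped: meet closure together with the top element $\rho_N$ already equips $\mathcal{U}_N$ with a complete-lattice structure whose meets agree with those inherited from $\mathcal{U}(A)$.
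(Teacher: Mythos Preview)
Your argument is correct and follows the line the paper sketches. The paper offers only the sentence ``Since the intersection of an arbitrary nonempty family of $E$-unitary congruences on $A$ is again an $E$-unitary congruence on $A$, and $\mathcal{U}(A)$ has a least element, then the following corollary is valid,'' and your first paragraph is precisely a clean unpacking of that sentence via Theorem~\ref{E-unitary congruences}. For $\mathcal{U}_N$ the paper gives no separate justification at this point; your interval identification $\mathcal{U}_N=[\rho_N\cap\mu,\rho_N]$ is exactly what the paper later records after Theorem~\ref{ME} (there as the kernel class $\rho_N\kappa$), so you are not introducing a new idea, only anticipating one. Note incidentally that with the paper's definition $\ker(\rho)=\{a:(a,a^2)\in\rho\}$ the equality $\ker(\rho_1\cap\rho_2)=\ker(\rho_1)\cap\ker(\rho_2)$ is immediate, so your kernel step requires no forward reference to Section~6; and your argument in fact shows that \emph{every} $\alpha\in\mathcal{C}(A)$ lying in $[\rho_N\cap\mu,\rho_N]$ has $\ker(\alpha)=N$ and is therefore $E$-unitary by Proposition~\ref{U(A)}, so the interval in $\mathcal{C}(A)$ and in $\mathcal{U}(A)$ coincide.
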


In view of the corollary, for each $\rho\in\mathcal{C}(A)$, there is the least $E$-unitary congruence $\pi_{\rho}$ containing $\rho$. We will show in Section $6$ that $\pi_{\rho}=\sigma\rho\hspace{0.35mm}\sigma\cap\mu\rho\mu$.

\section{The trace classes of $\mathcal{C}(A)$}

Let $\rho$ be a congruence on $A$, where $A$ denotes (unless otherwise stated) an arbitrary completely inverse $AG^{**}$-groupoid.~Put $K=\text{ker}(\rho)$.~It is immediate that $K$ is a full completely inverse $AG^{**}$-subgroupoid of $A$.~In the light of Proposition \ref{ab=ba}, $K$ is also symmetric.~Finally, put $\rho_{(K,\tau)}=\rho$, where $\tau=\text{tr}(\rho)$.~Theorem \ref{kernel-trace} states that
\begin{equation}\label{ee4}
(a,b)\in\rho_{(K,\tau)}\iff (aa^{-1},bb^{-1})\in\tau~~\&~~ab^{-1}\in K.
\end{equation}
Notice that if $a\in\text{ker}(\rho_{(K,\tau)})$, that is, $(a,e)\in\rho_{(K,\tau)}$, where $e\in E_A$, then
$$
ea\in K~~\&~~(e,aa^{-1})\in\text{tr}(\rho_{(K,\tau)}).
$$
Observe further that if $ea\in K$ and $(e,aa^{-1})\in\text{tr}(\rho)$, then $\,a=(aa^{-1})a\,\rho\,ea$, therefore, $a\in K$.

Also, the following special case is of particular interest.

\begin{proposition}\label{U(A)} Let $A$ be a completely inverse $AG^{**}$-groupoid.~Then $\rho\in\mathcal{U}(A)$ if and only if $\,\ker(\rho)$ is closed in $A$.
\end{proposition}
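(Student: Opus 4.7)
The plan is to translate ``$\rho$ is $E$-unitary'' via the definition to ``$A/\rho$ is $E$-unitary'', and then apply Proposition~\ref{E-unitary} to reduce this to left-unitarity of $E_{A/\rho}$ inside $A/\rho$. The key technical tool bridging the quotient and $A$ itself is the idempotent-surjectivity guaranteed by Theorem~\ref{i-s}, which lets us pull back any idempotent $\rho$-class to a genuine element of $E_A$. With these two observations the equivalence will essentially rewrite the definition of a closed kernel.

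For the forward direction, I would assume that $\rho$ is $E$-unitary and take $a \in A$ with some $b \in \ker(\rho)$ satisfying $b \leq a$. By definition of the natural partial order, $b = ea$ for some $e \in E_A$, and since $b \in \ker(\rho)$ the class $b\rho = (e\rho)(a\rho)$ is an idempotent of $A/\rho$. Because $e\rho$ is also an idempotent of $A/\rho$, and $E_{A/\rho}$ is left unitary in the $E$-unitary groupoid $A/\rho$, the class $a\rho$ must be idempotent. Theorem~\ref{i-s} then supplies some $g \in E_A \cap a\rho$, showing $a \in \ker(\rho)$; hence $\ker(\rho)$ is closed.

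For the converse, assume $\ker(\rho)$ is closed. I need to verify that $E_{A/\rho}$ is left unitary, so let $e\rho, (e\rho)(a\rho) \in E_{A/\rho}$. Invoking Theorem~\ref{i-s} on the idempotent class $e\rho$, I may replace $e$ by an element of $E_A$ without altering $e\rho$, and thus assume $e \in E_A$. Then $(ea)\rho = (e\rho)(a\rho)$ is an idempotent class, so by idempotent-surjectivity again $ea \in \ker(\rho)$. Since $ea = ea$ with $e \in E_A$, one has $ea \leq a$, and the closedness of $\ker(\rho)$ forces $a \in \ker(\rho)$. Therefore $a\rho$ contains an element of $E_A$, so $a\rho \in E_{A/\rho}$. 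Proposition~\ref{E-unitary} then yields that $A/\rho$ is $E$-unitary, i.e.\ $\rho \in \mathcal{U}(A)$.

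The only delicate point is keeping straight the distinction between ``$x\rho$ is an idempotent of $A/\rho$'' and ``$x \in E_A$''; these differ in general but are reconciled by idempotent-surjectivity. No further computation beyond unwinding the definitions of $\leq$, $\ker(\rho)$, and $E$-unitary is needed, so I anticipate no real obstacle beyond bookkeeping.
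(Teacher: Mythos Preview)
Your proof is correct and follows essentially the same route as the paper's: both directions unwind the definitions of $E$-unitarity in $A/\rho$ and of the closure $(\ker\rho)\omega$, using idempotent-surjectivity to pass between idempotent $\rho$-classes and genuine idempotents of $A$. The only cosmetic difference is that you make the appeals to Theorem~\ref{i-s} and Proposition~\ref{E-unitary} explicit, whereas the paper absorbs them into the standing characterization $\ker(\rho)=\bigcup\{e\rho:e\in E_A\}$.
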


\begin{proof} Let $\rho\in\mathcal{U}(A)$ and $a\in(\text{ker}(\rho))\omega$.~Then $b=ea$ for some $b\in\text{ker}(\rho)$ and $e\in E_A$. Hence $b\rho=(e\rho)(a\rho)$, where $e\rho,b\rho\in E_{A/\rho}$ and so $a\rho\in E_{A/\rho}$, since $A/\rho$ is $E$-unitary. Thus $a\in\text{ker}(\rho)$. Consequently, $(\text{ker}(\rho))\omega=\text{ker}(\rho)$.

Conversely, let $(e\rho)(a\rho)=f\rho$, where $a\in A$ and $e,f\in E_A$, then $ea\in\text{ker}(\rho)$. Hence $a\in(\text{ker}(\rho))\omega=\text{ker}(\rho)$, that is, $a\rho\in E_{A/\rho}$. Thus $\rho$ is $E$-unitary.
\end{proof}

In Section $3$ we have called a completely inverse $AG^{**}$-subgroupoid of $A$ normal if it is full, symmetric and closed in $A$.~Also, we say that a completely inverse $AG^{**}$-subgroupoid $K$ is \emph{seminormal} if $K$ is full and symmetric.

Finally, for any ordered pair $(K,\tau)$, where $K$ is a seminormal completely inverse $AG^{**}$-subgroupoid of $A$ and $\tau$ is a congruence on $E_A$ such that

\vspace{2mm}$(CP)$ \ if $ea\in K$ and $(e,aa^{-1})\in\tau$, then $a\in K$ ($a\in A,e\in E_A$),
\vspace{2mm}\newline define a relation $\rho_{(K,\tau)}$ like the above. In that case, $(K,\tau)$ is a \emph{congruence pair} for $A$ and we can define a relation $\rho_{(K,\tau)}$ as in \eqref{ee4} above.

The following theorem together with the above consideration and Theorem \ref{kernel-trace} says that any congruence on $A$  is of the form $\rho_{(K,\tau)}$, where $(K,\tau)$ is a congruence pair for $A$, and this expression is unique.

\begin{theorem}\label{ACP}If $(K,\tau)$ is a congruence pair for a completely inverse $AG^{**}$-groupoid $A$, then $\rho_{(K,\tau)}$ is the unique congruence on $A$ with $\ker(\rho_{(K,\tau)})=K$ and ${\rm tr}(\rho_{(K,\tau)})=\tau$.

Conversely, if $\rho$ is a congruence on $A$, then $(\ker(\rho),{\rm tr}(\rho))$ is a congruence pair for $A$ and $\rho_{(\ker(\rho),{\rm tr}(\rho))}=\rho$.
\end{theorem}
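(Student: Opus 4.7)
The plan splits into the two directions. The converse is essentially bookkeeping: given $\rho\in\mathcal{C}(A)$, set $K=\ker(\rho)$ and $\tau=\text{tr}(\rho)$. I would check that $K$ is full (since $e\,\rho\,e$ for every $e\in E_A$), closed under multiplication and inversion (both respected by $\rho$), and symmetric by transferring Proposition~\ref{ab=ba} to the quotient $A/\rho$; that $\tau$ is a congruence on $E_A$ by construction; and that $(CP)$ holds, which is precisely the observation already made in the discussion preceding the theorem, namely $a=(aa^{-1})a\;\rho\;ea\in K$ whenever $ea\in K$ and $(e,aa^{-1})\in\tau$. The equality $\rho_{(K,\tau)}=\rho$ is then exactly Theorem~\ref{kernel-trace}.

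For the forward direction, given a congruence pair $(K,\tau)$, I would show in turn that $\rho_{(K,\tau)}$ is an equivalence, is compatible, and has kernel $K$ and trace $\tau$. Reflexivity is immediate since $K$ is full; symmetry follows because $K$ is symmetric and closed under inversion, so $ab^{-1}\in K$ yields $b^{-1}a\in K$ and hence $ba^{-1}=(b^{-1}a)^{-1}\in K$. Compatibility is handled by the medial identity: $(ca)(cb)^{-1}=(cc^{-1})(ab^{-1})\in K$ and $(ca)(ca)^{-1}=(cc^{-1})(aa^{-1})$, with the trace coordinates controlled by $\tau$ being a congruence on $E_A$ (the right-multiplication case is analogous). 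Then $(e,f)\in\text{tr}(\rho_{(K,\tau)})$ iff $(e,f)\in\tau$, because $ef\in E_A\subseteq K$ is automatic; and $a\in K$ yields $(a,aa^{-1})\in\rho_{(K,\tau)}$ using $a\cdot aa^{-1}\in K$ (closure), while any $a\in\ker(\rho_{(K,\tau)})$ satisfies $ae\in K$ and $(e,aa^{-1})\in\tau$ for some idempotent $e$, so by the symmetry of $K$ also $ea\in K$, and $(CP)$ delivers $a\in K$. Uniqueness is Theorem~\ref{kernel-trace}.

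The main obstacle is transitivity of $\rho_{(K,\tau)}$. Suppose $ab^{-1},bc^{-1}\in K$ and $(aa^{-1},bb^{-1}),(bb^{-1},cc^{-1})\in\tau$; I need $ac^{-1}\in K$ (the trace condition $(aa^{-1},cc^{-1})\in\tau$ is immediate from transitivity of $\tau$). The key manipulation chains three elementary moves:
\[
(bb^{-1})(ac^{-1})\;=\;((bb^{-1})a)\,c^{-1}\;=\;((ab^{-1})b)\,c^{-1}\;=\;(c^{-1}b)(ab^{-1}),
\]
where the first step is Proposition~\ref{semilattice} applied to the idempotent $bb^{-1}$, the second uses the left-invertive law in the form $(bb^{-1})a=(ab^{-1})b$, and the third applies the same law once more. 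Since $bc^{-1}\in K$ and $K$ is symmetric, $c^{-1}b\in K$; hence the right-hand side lies in $K$ by closure of $K$ under multiplication. On the trace side, $\tau$ being a congruence on $E_A$ gives $(bb^{-1},aa^{-1}\cdot cc^{-1})\in\tau$, and medial yields $(ac^{-1})(ac^{-1})^{-1}=aa^{-1}\cdot cc^{-1}$. Invoking $(CP)$ with $e=bb^{-1}$ and $a$ replaced by $ac^{-1}$ therefore delivers $ac^{-1}\in K$, which completes the proof of transitivity and hence of the theorem.
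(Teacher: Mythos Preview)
Your proposal is correct and follows essentially the same approach as the paper. The only cosmetic difference is in the transitivity step: the paper reaches $(bb^{-1})(ac^{-1})\in K$ via the medial law applied to $(b^{-1}a)(bc^{-1})$, whereas you reach it via Proposition~\ref{semilattice} and two applications of the left-invertive law to arrive at $(c^{-1}b)(ab^{-1})$; both then invoke $(CP)$ with $e=bb^{-1}$ in exactly the same way.
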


\begin{proof} It is sufficient to show the direct part of the theorem.~Put $\rho=\rho_{(K,\tau)}$. It is clear that $\rho$ is reflexive and symmetric.~Let now $(a,b),(b,c)\in\rho$.~Then $(aa^{-1},cc^{-1})\in\tau$ and $(b^{-1}a)(bc^{-1})=(b^{-1}b)(ac^{-1})=(bb^{-1})(ac^{-1})\in K$. Also,
$$bb^{-1}~\tau~(aa^{-1})(c^{-1}c)=(ac^{-1})(a^{-1}c)=(ac^{-1})(ac^{-1})^{-1}.$$ In the light of the condition $(CP)$, $ac^{-1}\in K$. Thus $\rho$ is transitive. Let $(a,b)\in\rho$ and $c\in A$. Then
$$(ca)(ca)^{-1}=(ca)(c^{-1}a^{-1})=(cc^{-1})(aa^{-1})~\tau~(cc^{-1})(bb^{-1})=(cb)(cb)^{-1},$$
$$(ac)(ac)^{-1}=(ac)(a^{-1}c^{-1})=(aa^{-1})(cc^{-1})~\tau~(bb^{-1})(cc^{-1})=(bc)(bc)^{-1}.$$
Also,
$$(ca)(cb)^{-1}=(ca)(c^{-1}b^{-1})=(cc^{-1})(ab^{-1})\in E_AK\subseteq KK\subseteq K,$$
$$(ac)(bc)^{-1}=(ac)(b^{-1}c^{-1})=(ab^{-1})(cc^{-1})\in KE_A\subseteq KK\subseteq K.$$
Consequently, $\rho$ is a congruence on $A$.

Finally, let $a\in\text{ker}(\rho)$, that is, $(a,e)\in\rho$ for some $e\in E_A$. Then clearly  $ea\in K$ and $(e,aa^{-1})\in\tau$. Hence $a\in K$ (by $(CP)$). Thus $\text{ker}(\rho)\subseteq K$. Conversely, let $a\in K$. Then $a^{-1}\in K$. Hence $(a^{-1}a,a)\in\rho$ and so $a\in\text{ker}(\rho)$. Thus ker$(\rho)=K$. Evidently, tr$(\rho)=\tau$. In view of Theorem \ref{kernel-trace}, $\rho_{(K,\tau)}$ is uniquely determined by the congruence pair $(K,\tau)$.
\end{proof}

It is easy to see that if $K$ is closed in $A$, then the condition $(CP)$ is not necessary in the proof of the direct part of Theorem \ref{ACP}.~Combining this fact with Proposition \ref{U(A)} and Theorem \ref{kernel-trace} we obtain the following corollary.

\begin{corollary}\label{C4} Each $E$-unitary congruence $\rho$ on a completely inverse $AG^{**}$-groupoid $A$ is of the form $\rho_{(K,\tau)}$, where $K\lhd A$ and $\tau\in\mathcal{C}(E_A)$, and this expression is unique.
\end{corollary}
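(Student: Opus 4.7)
The plan is to read off this statement directly from the machinery already in place: Theorem~\ref{ACP} tells us every congruence is encoded by a congruence pair, Proposition~\ref{U(A)} tells us that being $E$-unitary is equivalent to closedness of the kernel, and the definition of normal (full, symmetric, closed) is precisely what is needed for the kernel to qualify.

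Concretely, let $\rho$ be an $E$-unitary congruence on $A$, and set $K=\ker(\rho)$, $\tau=\mathrm{tr}(\rho)$. By Theorem~\ref{ACP}, $(K,\tau)$ is a congruence pair for $A$ and $\rho=\rho_{(K,\tau)}$. From the definition of $\mathrm{tr}$, $\tau\in\mathcal{C}(E_A)$. So the only thing left is to check $K\lhd A$. By Theorem~\ref{ACP} itself, $K$ is a full, symmetric, completely inverse $AG^{**}$-subgroupoid of $A$ (since these properties hold for the kernel of any congruence). Applying Proposition~\ref{U(A)} to the $E$-unitary congruence $\rho$ gives $K=K\omega$, so $K$ is closed in $A$. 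Combining these four properties, $K$ is normal, i.e.\ $K\lhd A$, as required for the existence part.

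For uniqueness, suppose $\rho=\rho_{(K_1,\tau_1)}=\rho_{(K_2,\tau_2)}$ with $K_i\lhd A$ and $\tau_i\in\mathcal{C}(E_A)$. By the second half of Theorem~\ref{ACP}, we have $K_i=\ker(\rho_{(K_i,\tau_i)})=\ker(\rho)$ and $\tau_i=\mathrm{tr}(\rho_{(K_i,\tau_i)})=\mathrm{tr}(\rho)$ for $i=1,2$, so $K_1=K_2$ and $\tau_1=\tau_2$.

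There is essentially no obstacle here; the corollary is a packaging of Theorem~\ref{ACP} with Proposition~\ref{U(A)}. The only subtlety worth pointing out (already flagged by the authors in the sentence preceding the corollary) is that when $K$ is closed one does not have to separately verify the condition $(CP)$ to know that $\rho_{(K,\tau)}$ is well-defined; this is why the statement allows arbitrary $\tau\in\mathcal{C}(E_A)$ without a further compatibility hypothesis, and it is consistent with the uniqueness argument above.
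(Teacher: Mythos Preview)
Your proof is correct and follows essentially the same route as the paper, which simply remarks that the corollary follows by combining Proposition~\ref{U(A)}, Theorem~\ref{kernel-trace}, and the observation that condition $(CP)$ is automatic when $K$ is closed. Your expanded version spells out exactly these ingredients; the only cosmetic issue is that the fact ``$K=\ker(\rho)$ is a full, symmetric, completely inverse $AG^{**}$-subgroupoid'' is established in the discussion preceding Theorem~\ref{ACP} rather than in the theorem statement itself, and the equality $\ker(\rho_{(K_i,\tau_i)})=K_i$ you invoke for uniqueness is in the \emph{direct} part of Theorem~\ref{ACP} (together with the closed-$K$ remark), not the converse part.
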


\begin{remark} One can modify Proposition III.$2.3$ \cite{Pet} for completely inverse $AG^{**}$-groupoids.
\end{remark}

Further, let $\rho$ be a congruence on $A$. Put
$$\mu(\rho)=\{(a,b)\in A\times A:(a\rho,b\rho)\in\mu_{A/\rho}\}.$$
Clearly, $\mu(\rho)\in\mathcal{C}(A)$ and $\rho\subseteq\mu(\rho)$. From Theorem \ref{mu} follows that
$$(a,b)\in\mu(\rho)\iff (aa^{-1},bb^{-1})\in\rho.$$
Put $\mu(\rho)=\rho^{\theta}$. It is clear that $\text{tr}(\rho)=\text{tr}(\rho^\theta)$. Also, if $\text{tr}(\rho_1)=\text{tr}(\rho_2)$ ($\rho_1,\rho_2\in\mathcal{C}(A)$), then from the above equality follows that $\rho_1^{\theta}=\rho_2^{\theta}$. Consequently, $\rho^{\theta}$ is the maximum congruence with respect to tr$(\rho)$.
\newline Also, put (see Theorem \ref{sigma})
$$\rho_\theta=\{(a,b)\in A\times A:(aa^{-1},bb^{-1})\in\rho~~\&~~(\exists~e\in E_{(aa^{-1})\rho})~ea=eb\}.$$
Since $a=(aa^{-1})a$, then $\rho_\theta$ is reflexive.~Obviously, $\rho_\theta$ is symmetric.~The proof that $\rho_\theta$ is transitive and left compatible is closely similar to the corresponding proof for the relation $\sigma$ (see Theorem \ref{sigma}). Let $(a,b)\in\rho_\theta$ and $c\in A$. Then
$$(ac)(ac)^{-1}=(ac)(a^{-1}c^{-1})=(aa^{-1})(cc^{-1})~\rho~(bb^{-1})(cc^{-1})=(bc)(bc)^{-1}.$$
Also, $e(cc^{-1})~\rho~(aa^{-1})(cc^{-1})=(ac)(ac)^{-1}$ and
$$e(cc^{-1})\cdot ac = ea\cdot (cc^{-1})c= eb\cdot (cc^{-1})c=e(cc^{-1})\cdot bc.$$
Consequently, $\rho_\theta$ is a congruence on $A$.~Finally, from the definition of $\rho_\theta$ follows that tr$(\rho)=\text{tr}(\rho_\theta)$, and since the definition of $\rho_\theta$ depends only on idempotents, then $\rho_\theta$ is the minimum congruence with respect to tr$(\rho)$.

We have just proved part of the following theorem.

\begin{theorem}\label{main trace} Let $A$ be an arbitrary completely inverse $AG^{**}$-groupoid.~Define a map $\Theta:\mathcal{C}(A)\to\mathcal{C}(E_A)$ by
$$
\rho\hspace{0.1mm}\Theta={\rm tr}(\rho)~~(\rho\in\mathcal{C}(A)).
$$
Then $\Theta$ is a complete lattice homomorphism of $\mathcal{C}(A)$ onto $\mathcal{C}(E_A)$.~Also, if $\theta$ denotes the congruence on $\mathcal{C}(A)$ induced by $\Theta$, that is,
$$
\theta=\{(\rho_1,\rho_2)\in\mathcal{C}(A)\times\mathcal{C}(A):{\rm tr}(\rho_1)={\rm tr}(\rho_2)\},
$$
then for every $\rho\in\mathcal{C}(A)$,
$$\rho\theta=[\rho_\theta,\rho^\theta]$$
is a complete modular sublattice $($with commuting elements$)$ of $\,\mathcal{C}(A)$.
\end{theorem}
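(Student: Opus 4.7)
The proof breaks naturally into four parts: (i) $\Theta$ preserves arbitrary meets; (ii) $\Theta$ preserves arbitrary joins; (iii) $\Theta$ is onto; (iv) identifying the fibre $\rho\theta$ with the interval $[\rho_\theta,\rho^\theta]$ and verifying modularity via commutativity. The authors have already shown that $\rho^\theta$ and $\rho_\theta$ are the largest and smallest congruences with trace $\mathrm{tr}(\rho)$, so (iv) is partly in hand.

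Part (i) is immediate, since the trace is the restriction to $E_A\times E_A$ and so $\mathrm{tr}(\bigcap_i\rho_i)=\bigcap_i\mathrm{tr}(\rho_i)$. For (ii) the inclusion $\bigvee_i\mathrm{tr}(\rho_i)\subseteq\mathrm{tr}(\bigvee_i\rho_i)$ is trivial. For the reverse, let $(e,f)\in\mathrm{tr}(\bigvee_i\rho_i)$. Since the join in $\mathcal{C}(A)$ is the transitive closure of the union, there is a chain $e=a_0,a_1,\ldots,a_n=f$ with $(a_{k-1},a_k)\in\rho_{i_k}$ for some indices $i_k$. Applying the inverse and multiplying, $(a_{k-1}a_{k-1}^{-1},a_ka_k^{-1})\in\mathrm{tr}(\rho_{i_k})$ for each $k$; since $a_0a_0^{-1}=e$ and $a_na_n^{-1}=f$, this chain of idempotents witnesses $(e,f)\in\bigvee_i\mathrm{tr}(\rho_i)$.

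For (iii), given $\tau\in\mathcal{C}(E_A)$ I would define a relation $\tilde\tau$ on $A$ by $(a,b)\in\tilde\tau\iff(aa^{-1},bb^{-1})\in\tau$. The medial law gives $(ca)(ca)^{-1}=(cc^{-1})(aa^{-1})$ and $(ac)(ac)^{-1}=(aa^{-1})(cc^{-1})$, so compatibility of $\tilde\tau$ follows from the fact that $\tau$ is a congruence on the semilattice $E_A$; the remaining equivalence axioms are clear, and evidently $\mathrm{tr}(\tilde\tau)=\tau$. For (iv), any $\sigma\in\rho\theta$ lies in $[\rho_\theta,\rho^\theta]$ by the maximality/minimality already established, and conversely any $\sigma$ in this interval has $\mathrm{tr}(\sigma)$ squeezed between $\mathrm{tr}(\rho_\theta)=\mathrm{tr}(\rho^\theta)=\mathrm{tr}(\rho)$, hence equal to $\mathrm{tr}(\rho)$.

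The main obstacle is showing that any two $\alpha,\beta\in\rho\theta$ commute, after which modularity will follow from the paper's earlier general remark on commuting sublattices of $\mathcal{C}(A)$ (the comment preceding Theorem~\ref{m}). Given $(a,b)\in\alpha\circ\beta$ with witness $c$ (so $a\,\alpha\,c\,\beta\,b$), my proposal is to use $d:=(ac^{-1})b$ as a witness for $(a,b)\in\beta\circ\alpha$. Using $(xy)z=(zy)x$, the $AG^{**}$-identity $x(yz)=y(xz)$, and the medial law, a direct calculation yields
$$ad^{-1}=(aa^{-1})(cb^{-1}),\qquad db^{-1}=(bb^{-1})(ac^{-1}),\qquad dd^{-1}=(aa^{-1})(cc^{-1})(bb^{-1}).$$
Since $cb^{-1}\in\ker(\beta)$ and $ac^{-1}\in\ker(\alpha)$, and since kernels absorb idempotents, the first two elements lie in $\ker(\beta)$ and $\ker(\alpha)$ respectively. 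The relations $(aa^{-1},cc^{-1}),(cc^{-1},bb^{-1})\in\mathrm{tr}(\alpha)=\mathrm{tr}(\beta)=\mathrm{tr}(\rho)$, together with the fact that $\mathrm{tr}(\rho)$ is a semilattice congruence, immediately give $(aa^{-1},dd^{-1})\in\mathrm{tr}(\beta)$ and $(dd^{-1},bb^{-1})\in\mathrm{tr}(\alpha)$. Theorem~\ref{kernel-trace} now yields $a\,\beta\,d\,\alpha\,b$, so $\alpha\circ\beta\subseteq\beta\circ\alpha$, and symmetry closes the proof.
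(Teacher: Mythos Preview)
Your proof is correct and follows essentially the same approach as the paper: parts (i)--(iii) match the paper's argument (surjectivity via $\tilde\tau$ is verbatim), and for the commutativity in part (iv) you use precisely the same witness $d=(ac^{-1})b$. The only cosmetic difference is that the paper verifies $a\,\beta\,d\,\alpha\,b$ by direct congruence manipulation (e.g.\ $a=(aa^{-1})a\,\beta\,(bc^{-1})a=(ac^{-1})b$), whereas you compute $ad^{-1}$, $db^{-1}$, $dd^{-1}$ and invoke Theorem~\ref{kernel-trace}; both arguments are equivalent.
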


\begin{proof} The proof that $\Theta$ is a complete homomorphism is closely similar to the corresponding proof of Theorem III.2.5 \cite{Pet}, since the join of any nonempty family $\mathcal{F}$ of congruences in an arbitrary universal algebra is given by $\bigcup_{n\hspace{0.2mm}\in\hspace{0.35mm}\mathbb{N}}(\bigcup\mathcal{F})^n$. Further, let $\tau$ be a congruence on $E_A$. Define an equivalence relation $\rho$ on $A$ by
$$\rho=\{(a,b)\in A\times A:(aa^{-1},bb^{-1})\in\tau\}.$$
It is easy to check that $\rho$ is compatible with the operation on $A$.~Consequently, $\rho\in\mathcal{C}(A)$. Obviously, $\text{tr}(\rho)=\tau$. Thus $\Theta$ maps $\mathcal{C}(A)$ onto $\mathcal{C}(E_A)$.

Finally, $\rho\theta$ is an interval of a complete lattice, so it is itself \nolinebreak a complete lattice.~Let $\rho_1,\rho_2\in\rho\theta$ and $a(\rho_1\rho_2)b$.~Then $a\rho_1 c\rho_2b$, where $c\in A$, so $(aa^{-1})\rho_1(cc^{-1})\rho_2(bb^{-1})$. Hence $(aa^{-1})\rho_2(cc^{-1})\rho_1(bb^{-1})$, since ${\rm tr}(\rho_1)={\rm tr}(\rho_2)$.~Moreover, $(cc^{-1})\rho_2(bc^{-1})$. It follows that $(aa^{-1})\rho_2(bc^{-1})$. Consequently,
$$a=(aa^{-1}\cdot a)\rho_2(bc^{-1}\cdot a)=(ac^{-1})b.$$
Further, $(ac^{-1})\rho_1(cc^{-1})$ and so $(ac^{-1})\rho_1(bb^{-1})$. Hence $(ac^{-1}\cdot b)\rho_1(bb^{-1}\cdot b)=b$. We have just shown that $a\rho_2(ac^{-1}\cdot b)\rho_1b$, that is, $\rho_1\rho_2\subseteq\rho_2\rho_1$.~By symmetry, we deduce that $\rho_1\rho_2=\rho_2\rho_1$, therefore, the lattice $\rho\theta$ is modular.
\end{proof}

We call the classes of $\theta$ in the above theorem, the \emph{trace classes} of $A$.

\begin{lemma}\label{L1}  Let $A$ be a completely inverse $AG^{**}$-groupoid.~Then
$$
\rho_\theta\subseteq\gamma_\theta\iff {\rm tr}(\rho)\subseteq{\rm tr}(\gamma)\iff\rho^\theta\subseteq\gamma^\theta$$
for all $\rho,\gamma\in\mathcal{C}(A)$. Also, if $\rho\subseteq\gamma$, then $\rho_\theta\subseteq\gamma_\theta$ and $\rho^\theta\subseteq\gamma^\theta$.
\end{lemma}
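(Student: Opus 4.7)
The plan is to reduce everything to the trace. Since by the preceding Theorem \ref{main trace} (and the discussion right before it) we already know that $\rho^\theta$ is the maximum and $\rho_\theta$ the minimum element in the trace class $\rho\theta$, both having trace equal to $\text{tr}(\rho)$, the natural strategy is to show that each of the two outer inclusions is equivalent to $\text{tr}(\rho)\subseteq\text{tr}(\gamma)$, and then derive the final monotonicity statement by noting that $\rho\subseteq\gamma$ trivially forces $\text{tr}(\rho)\subseteq\text{tr}(\gamma)$.

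I would handle $\rho^\theta\subseteq\gamma^\theta\iff\text{tr}(\rho)\subseteq\text{tr}(\gamma)$ first since it is the cleanest. For $(\Leftarrow)$, take $(a,b)\in\rho^\theta$; by the characterization $(aa^{-1},bb^{-1})\in\rho$, and since both $aa^{-1},bb^{-1}\in E_A$ this element of $\rho$ lies in $\text{tr}(\rho)\subseteq\text{tr}(\gamma)\subseteq\gamma$, giving $(a,b)\in\gamma^\theta$. For $(\Rightarrow)$, let $(e,f)\in\text{tr}(\rho)$; since $e^{-1}=e$ and $f^{-1}=f$, we have $(ee^{-1},ff^{-1})=(e,f)\in\rho$, so $(e,f)\in\rho^\theta\subseteq\gamma^\theta$, and then $(ee^{-1},ff^{-1})=(e,f)\in\gamma$, hence $(e,f)\in\text{tr}(\gamma)$.

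Next I would do $\rho_\theta\subseteq\gamma_\theta\iff\text{tr}(\rho)\subseteq\text{tr}(\gamma)$. For $(\Leftarrow)$, assume $(a,b)\in\rho_\theta$, so $(aa^{-1},bb^{-1})\in\rho$ and there is $e\in E_{(aa^{-1})\rho}$ with $ea=eb$. Then $(aa^{-1},bb^{-1})\in\text{tr}(\rho)\subseteq\text{tr}(\gamma)$; moreover $(e,aa^{-1})\in\rho$ with both elements idempotent, so $(e,aa^{-1})\in\text{tr}(\rho)\subseteq\text{tr}(\gamma)$, giving $e\in E_{(aa^{-1})\gamma}$, and therefore $(a,b)\in\gamma_\theta$. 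For $(\Rightarrow)$, let $(e,f)\in\text{tr}(\rho)$; the key is producing a witness showing $(e,f)\in\rho_\theta$. Take $g=ef\in E_A$. Since $E_A$ is a semilattice, $ge=(ef)e=ef=(ef)f=gf$, and since $(e,f)\in\rho$ yields $(ef,e\cdot e)=(ef,e)\in\rho$, we get $g\in E_{e\rho}=E_{(ee^{-1})\rho}$. Combined with $(ee^{-1},ff^{-1})=(e,f)\in\rho$, this shows $(e,f)\in\rho_\theta\subseteq\gamma_\theta$, so in particular $(ee^{-1},ff^{-1})=(e,f)\in\gamma$, i.e., $(e,f)\in\text{tr}(\gamma)$.

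The last sentence is immediate: if $\rho\subseteq\gamma$ then $\text{tr}(\rho)=\rho\cap(E_A\times E_A)\subseteq\gamma\cap(E_A\times E_A)=\text{tr}(\gamma)$, and the equivalences already proved give the desired inclusions. The only non-routine step is the forward direction for $\rho_\theta$, where one has to cook up the idempotent witness $g=ef$; everything else is direct substitution of the definitions, using only that $e^{-1}=e$ for $e\in E_A$ and that $E_A$ is a semilattice.
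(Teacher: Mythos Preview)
Your argument is correct and is precisely the unpacking of what the paper means by ``follows directly from the definitions of $\rho_\theta$ and $\rho^\theta$'': both relations depend only on $\text{tr}(\rho)$, and you verify the two equivalences by chasing idempotents through those definitions. The only place you do slightly more work than necessary is the forward implication for $\rho_\theta$, where instead of constructing the witness $g=ef$ you could simply invoke the already established fact $\text{tr}(\rho_\theta)=\text{tr}(\rho)$; but your explicit witness is perfectly valid.
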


\begin{proof}This follows directly from the definitions of $\rho_\theta$ and $\rho^\theta$.
\end{proof}

\begin{lemma}\label{L2}  Let $\mathcal{F}$ be an arbitrary nonempty family of congruences on a completely inverse $AG^{**}$-groupoid. Put
$$\mathcal{F}_\theta=\{\rho_\theta:\rho\in\mathcal{F}\},~~\mathcal{F}\hspace{0.4mm}^\theta=\{\rho^\theta:\rho\in\mathcal{F}\}.$$
Then
$$
\bigvee\mathcal{F}_\theta=\Big(\bigvee\mathcal{F}\Big)_\theta~~\&~~\bigcap\mathcal{F}\hspace{0.4mm}^\theta=\Big(\bigcap\mathcal{F}\Big)^\theta.
$$
\end{lemma}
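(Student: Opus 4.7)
The plan is to use the facts established earlier that $\rho_\theta$ (respectively $\rho^\theta$) is the minimum (respectively maximum) congruence in the trace class $\rho\theta$, combined with the fact from Theorem \ref{main trace} that $\Theta:\rho\mapsto\text{tr}(\rho)$ is a complete lattice homomorphism. The proof reduces essentially to comparing traces and then invoking Lemma \ref{L1}.

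For the first equality, set $\beta=\bigvee\mathcal{F}$ and $\gamma=\bigvee\mathcal{F}_\theta$. Since $\rho_\theta\subseteq\rho$ for each $\rho\in\mathcal{F}$, we have $\gamma\subseteq\beta$, so $\text{tr}(\gamma)\subseteq\text{tr}(\beta)$. Using that $\Theta$ is a complete homomorphism and that $\text{tr}(\rho_\theta)=\text{tr}(\rho)$, I compute
$$\text{tr}(\gamma)=\bigvee_{\rho\in\mathcal{F}}\text{tr}(\rho_\theta)=\bigvee_{\rho\in\mathcal{F}}\text{tr}(\rho)=\text{tr}(\beta).$$
Hence $\beta_\theta$ and $\gamma$ share the same trace. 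Since $\beta_\theta$ is the minimum congruence with trace $\text{tr}(\beta)$, it follows that $\beta_\theta\subseteq\gamma$. For the reverse inclusion, note that $\rho\subseteq\beta$ implies $\text{tr}(\rho)\subseteq\text{tr}(\beta)$, and then by Lemma \ref{L1} we get $\rho_\theta\subseteq\beta_\theta$ for every $\rho\in\mathcal{F}$. Taking joins yields $\gamma\subseteq\beta_\theta$, which combined with the previous inclusion gives $\gamma=\beta_\theta$, i.e.\ $\bigvee\mathcal{F}_\theta=(\bigvee\mathcal{F})_\theta$.

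For the second equality, set $\delta=\bigcap\mathcal{F}$ and $\gamma'=\bigcap\mathcal{F}^\theta$. Intersections of congruences restrict to intersections on $E_A\times E_A$, so
$$\text{tr}(\gamma')=\bigcap_{\rho\in\mathcal{F}}\text{tr}(\rho^\theta)=\bigcap_{\rho\in\mathcal{F}}\text{tr}(\rho)=\text{tr}(\delta).$$
Since $\delta^\theta$ is the \emph{maximum} congruence with trace $\text{tr}(\delta)$, we obtain $\gamma'\subseteq\delta^\theta$. Conversely, $\delta\subseteq\rho$ for each $\rho\in\mathcal{F}$ gives $\text{tr}(\delta)\subseteq\text{tr}(\rho)$, hence $\delta^\theta\subseteq\rho^\theta$ by Lemma \ref{L1}. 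Intersecting over $\rho\in\mathcal{F}$ gives $\delta^\theta\subseteq\gamma'$, so $\gamma'=\delta^\theta$, i.e.\ $\bigcap\mathcal{F}^\theta=(\bigcap\mathcal{F})^\theta$.

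The argument is essentially bookkeeping; the only real input is the extremal characterization of $\rho_\theta$ and $\rho^\theta$ within their trace class and the fact that $\Theta$ preserves arbitrary joins. The mildest subtlety is the step $\rho_\theta\subseteq\beta_\theta$, which needs Lemma \ref{L1} rather than direct monotonicity of $\rho\mapsto\rho_\theta$; but this is immediate once traces have been compared.
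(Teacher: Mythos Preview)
Your proof is correct and follows essentially the standard argument that the paper defers to Petrich's Lemma III.2.9: compute traces via the complete homomorphism $\Theta$, then use the extremal characterizations of $\rho_\theta$ and $\rho^\theta$ together with Lemma \ref{L1} to obtain both inclusions. There is nothing to add.
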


\begin{proof}The proof is similar to the proof of Lemma III.2.9 \cite{Pet}.
\end{proof}

\begin{lemma}\label{L3}  Let $A$ a completely inverse $AG^{**}$-groupoid.~Then $\sigma=(A\times A)_\theta$.
\end{lemma}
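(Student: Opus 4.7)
The plan is to prove the equality by unpacking both sides directly from their definitions and observing that everything collapses in the extreme case $\rho = A \times A$. No genuinely non-trivial reasoning should be required; the result is essentially a sanity check that the definition of $\rho_\theta$ specializes correctly at the top of the lattice $\mathcal{C}(A)$.

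First I would write out the definition of $\rho_\theta$ specialized to $\rho = A \times A$:
\[
(A \times A)_\theta = \bigl\{(a,b) \in A \times A : (aa^{-1}, bb^{-1}) \in A \times A \ \&\ (\exists\, e \in E_{(aa^{-1})(A\times A)})\, ea = eb\bigr\}.
\]
The first conjunct is vacuous, since every pair lies in $A \times A$. For the second conjunct, I would observe that the $(A \times A)$-class of any element of $A$ is all of $A$, so in particular $(aa^{-1})(A \times A) = A$ and therefore $E_{(aa^{-1})(A\times A)} = E_A$.

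Consequently the displayed set reduces to $\{(a,b) \in A \times A : (\exists\, e \in E_A)\, ea = eb\}$, which is exactly the definition of $\sigma$ provided by Theorem \ref{sigma}. This completes the proof.

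The only potential obstacle is notational: one must read $E_{(aa^{-1})\rho}$ correctly as the set of idempotents lying in the $\rho$-class of $aa^{-1}$, rather than as something more elaborate. Once this interpretation is fixed, the identification $(A \times A)_\theta = \sigma$ is immediate and no computation with the $AG^{**}$ axioms is needed.
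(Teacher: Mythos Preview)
Your proposal is correct and is precisely the unwinding that the paper intends: its own proof reads simply ``This is obvious,'' and you have spelled out exactly why by observing that for $\rho=A\times A$ the first conjunct in the definition of $\rho_\theta$ is vacuous and $E_{(aa^{-1})\rho}=E_A$, reducing the defining condition to that of $\sigma$.
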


\begin{proof} This is obvious.
\end{proof}

The following corollary gives another equivalent conditions for a completely inverse $AG^{**}$-groupoid to be $E$-unitary.

\begin{corollary}\label{E-unitary3} Let $A$ be a completely inverse $AG^{**}$-groupoid.~The following conditions are equivalent$:$

$(a)$ \ $A$ is $E$-unitary$;$

$(b)$ \ $\rho_\theta=\rho\cap\sigma$ for every $\rho\in\mathcal{C}(A);$

$(c)$ \ $\rho_\theta$ is an idempotent pure congruence on $A$ for every $\rho\in\mathcal{C}(A)$.
\end{corollary}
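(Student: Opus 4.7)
The plan is to close the equivalences as $(a)\Rightarrow(b)\Rightarrow(a)$ together with $(a)\Leftrightarrow(c)$, relying on Theorem \ref{E-unitary2} (the prior characterizations of $E$-unitary), Theorem \ref{sigma} (the identification $\ker(\sigma)=E_A\omega$), and Lemma \ref{L3}. A useful preliminary, holding for every completely inverse $AG^{**}$-groupoid, is the inclusion $\rho_\theta\subseteq\rho\cap\sigma$: if $(a,b)\in\rho_\theta$ with witness $e$, then $(a,b)\in\sigma$ is immediate, while $a=(aa^{-1})a\,\rho\,ea=eb\,\rho\,(bb^{-1})b=b$ follows from $e\,\rho\,aa^{-1}\,\rho\,bb^{-1}$.

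For $(a)\Rightarrow(b)$ the content is the reverse inclusion $\rho\cap\sigma\subseteq\rho_\theta$. Given $(a,b)\in\rho\cap\sigma$, Theorem \ref{E-unitary2}(b) yields $ab^{-1}\in\ker(\sigma)=E_A$. The key algebraic fact to prove is the identity $ab^{-1}=(aa^{-1})(bb^{-1})$ valid whenever $ab^{-1}\in E_A$: expanding $(ab^{-1})(ab^{-1})^{-1}$ via the medial law gives $(aa^{-1})(bb^{-1})$, while an idempotent coincides with its own inverse and is self-absorbing. Putting $e=ab^{-1}=fg$, where $f=aa^{-1}$ and $g=bb^{-1}$, the relation $e\,\rho\,aa^{-1}$ is then immediate from $(f,g)\in\text{tr}(\rho)$. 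The remaining step, verifying $ea=eb$, is the technical heart of the argument, and I intend to carry it out inside the $AG$-group $G_{fg}$: medial law gives $((fg)a)((fg)b^{-1})=(fg)(ab^{-1})=fg$ and $((fg)a)((fg)a^{-1})=(fg)(aa^{-1})=fg$, so both $(fg)a^{-1}$ and $(fg)b^{-1}$ are right inverses of $(fg)a$ in the $AG$-group $G_{fg}$. Uniqueness of inverses (Proposition \ref{AG-groups}) forces $(fg)a^{-1}=(fg)b^{-1}$, and inverting this inside $G_{fg}$ produces $(fg)a=(fg)b$, which is exactly $ea=eb$.

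For $(b)\Rightarrow(a)$ I would test (b) on $\rho=\mu$. Since $\text{tr}(\mu)=1_{E_A}$ and $\mu$ is idempotent-separating (Theorem \ref{mu}(c)), the defining conditions of $\mu_\theta$ force $aa^{-1}=bb^{-1}$ and the only admissible witness $e\in E_{(aa^{-1})\mu}$ is $aa^{-1}$ itself, so $(aa^{-1})a=(aa^{-1})b$ collapses to $a=b$. Hence $\mu_\theta=1_A$, and (b) rewrites this as $\mu\cap\sigma=\pi=1_A$, whence Theorem \ref{E-unitary2}(d) gives that $A$ is $E$-unitary.

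Finally, $(a)\Rightarrow(c)$ is almost free once (b) is established: in the $E$-unitary case $\sigma$ is itself idempotent pure by Theorem \ref{E-unitary2}(c), and any subcongruence of an idempotent pure congruence is idempotent pure, so $\rho_\theta=\rho\cap\sigma\subseteq\sigma$ is idempotent pure. Conversely, for $(c)\Rightarrow(a)$ I specialize to $\rho=A\times A$, where Lemma \ref{L3} identifies $\rho_\theta$ with $\sigma$; then (c) says $\sigma$ itself is idempotent pure, forcing $\ker(\sigma)=E_A$ and hence (a) by Theorem \ref{E-unitary2}(b). The principal obstacle is the $AG$-group uniqueness-of-inverses calculation verifying $ea=eb$ in the first step; everything else reduces quickly to already-established results.
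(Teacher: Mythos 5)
Your proof is correct, and the overall skeleton coincides with the paper's: you prove $(a)\Leftrightarrow(b)$ and $(a)\Leftrightarrow(c)$, using the same specializations $\rho=\mu$ for $(b)\Rightarrow(a)$ and $\rho=A\times A$ together with Lemma \ref{L3} for $(c)\Rightarrow(a)$. The genuine divergence is in the central inclusion $\rho\cap\sigma\subseteq\rho_\theta$ under hypothesis $(a)$. The paper dispatches this in two lines via Theorem \ref{kernel-trace}: it computes $\mathrm{tr}(\rho\cap\sigma)=\mathrm{tr}(\rho)=\mathrm{tr}(\rho_\theta)$ and $\ker(\rho\cap\sigma)=\ker(\rho)\cap E_A\omega=E_A\subseteq\ker(\rho_\theta)$, and invokes the fact that inclusion of congruences is detected by inclusion of kernels and traces. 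You instead verify membership in $\rho_\theta$ element by element: from $ab^{-1}\in\ker(\sigma)=E_A$ you extract the explicit witness $e=ab^{-1}=(aa^{-1})(bb^{-1})$, check $e\,\rho\,aa^{-1}$ from $\mathrm{tr}(\rho)$, and establish $ea=eb$ by showing that $(fg)a^{-1}$ and $(fg)b^{-1}$ are both right inverses of $(fg)a$ in the $AG$-group $G_{fg}$ and appealing to uniqueness of inverses. All of these computations check out (the identity $ab^{-1}=(aa^{-1})(bb^{-1})$ for $ab^{-1}\in E_A$, the location of the products in $G_{fg}$, and the inversion step via $(xy)^{-1}=x^{-1}y^{-1}$ are each valid). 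Your route is more self-contained and makes the witness idempotent in the definition of $\rho_\theta$ visible, at the cost of redoing work that Theorem \ref{kernel-trace} has already packaged; the paper's route is shorter but leans entirely on the kernel--trace machinery. Your $\rho_\theta\subseteq\rho\cap\sigma$ preliminary is likewise a direct verification where the paper cites Lemmas \ref{L1} and \ref{L3}; both are fine.
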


\begin{proof} Recall that $A$ is $E$-unitary if and only if $\sigma$ is the maximum idempotent pure congruence on $A$ (Theorem \ref{E-unitary2}).

$(a)\implies (b)$. If $\rho\in\mathcal{C}(A)$, then $\rho_\theta\subseteq\rho\cap(A\times A)_\theta=\rho\cap\sigma$ (Lemmas \ref{L1}, \ref{L3}). On the other hand,
$$\text{tr}(\rho\cap\sigma)=\text{tr}(\rho)\cap\text{tr}(\sigma)=\text{tr}(\rho)\cap (E_A\times E_A)=\text{tr}(\rho)=\text{tr}(\rho_\theta)$$
and
$$\text{ker}(\rho\cap\sigma)=\text{ker}(\rho)\cap\text{ker}(\sigma)=\text{ker}(\rho)\cap E_A=E_A\subseteq\text{ker}(\rho_\theta).$$
Thus $\rho\cap\sigma\subseteq\rho_\theta$ (Theorem \ref{kernel-trace}). Consequently,  $\rho_\theta=\rho\cap\sigma$.

$(b)\implies (a)$. Clearly, $\mu_\theta=1_A$.~Moreover, $\mu_\theta=\mu\cap\sigma=\pi$ (Corollary \ref{C2}), therefore, $\pi=1_A$, so $A$ is $E$-unitary.

It is now clear that $(a)$ implies $(c)$. We show the opposite implication. Indeed, if $(c)$ holds, then $(A\times A)_\theta=\sigma$ is idempotent pure. Since $\rho_\theta\subseteq\sigma$ for every $\rho\in\mathcal{C}(A)$, then each $\rho_\theta$ is idempotent pure, too, as required.
\end{proof}

We have mentioned in the above proof that if $A$ is $E$-unitary, then $\sigma$ is the maximum idempotent pure congruence on $A$, therefore, the set of all idempotent pure congruences $[1_A,\sigma]$ on an $E$-unitary completely inverse $AG^{**}$-groupoid $A$ forms a complete sublattice of the lattice $\mathcal{C}(A)$.

From the above corollary we obtain the following proposition.

\begin{proposition}\label{i-p} Let $A$ be an $E$-unitary completely inverse $AG^{**}$-groupoid.~Then the mapping $\chi:\mathcal{C}(A)\to \mathcal{C}(A)$ defined by
$$\rho\chi=\rho\cap\sigma~~(\rho\in\mathcal{C}(A))$$
is a complete lattice homomorphism of $\mathcal{C}(A)$ onto the lattice of all idempotent pure congruences on $A$.
\end{proposition}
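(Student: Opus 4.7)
The plan is to reduce everything to Corollary \ref{E-unitary3} and Lemma \ref{L2}. Since $A$ is $E$-unitary, Corollary \ref{E-unitary3}$(b)$ gives $\rho\chi=\rho\cap\sigma=\rho_\theta$ for every $\rho\in\mathcal{C}(A)$. So $\chi$ is essentially the map $\rho\mapsto\rho_\theta$, and the identities needed to check that it is a complete lattice homomorphism are built into Lemma \ref{L2}.

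First I would verify that $\chi$ preserves arbitrary meets. For any nonempty family $\mathcal{F}\subseteq\mathcal{C}(A)$ one has the trivial identity
\[
\Big(\bigcap\mathcal{F}\Big)\cap\sigma=\bigcap\{\rho\cap\sigma:\rho\in\mathcal{F}\},
\]
so $(\bigcap\mathcal{F})\chi=\bigcap\{\rho\chi:\rho\in\mathcal{F}\}$.

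Next I would establish preservation of arbitrary joins, which is the main step. Using Corollary \ref{E-unitary3} and Lemma \ref{L2},
\[
\Big(\bigvee\mathcal{F}\Big)\chi=\Big(\bigvee\mathcal{F}\Big)\cap\sigma=\Big(\bigvee\mathcal{F}\Big)_\theta=\bigvee\mathcal{F}_\theta=\bigvee\{\rho_\theta:\rho\in\mathcal{F}\}=\bigvee\{\rho\chi:\rho\in\mathcal{F}\}.
\]
This is the place where the $E$-unitary hypothesis is essential: without it the equality $\rho_\theta=\rho\cap\sigma$ fails, and arbitrary joins need not be compatible with intersection by $\sigma$.

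Finally I would identify the image of $\chi$ with the lattice $[1_A,\sigma]$ of all idempotent pure congruences. By Theorem \ref{E-unitary2}$(c)$, $\sigma$ is the maximum idempotent pure congruence on $A$, and any congruence contained in an idempotent pure congruence is itself idempotent pure; hence each $\rho\chi=\rho\cap\sigma$ is idempotent pure. Conversely, if $\gamma$ is idempotent pure then $\gamma\subseteq\sigma$ (see the remark preceding Theorem \ref{E-unitary2}), so $\gamma\chi=\gamma\cap\sigma=\gamma$, showing that $\gamma$ lies in the image. Thus $\chi$ maps $\mathcal{C}(A)$ onto $[1_A,\sigma]$, and the previous two steps show it is a complete lattice homomorphism onto this sublattice.

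The only subtle point is the join calculation; the meet statement and the surjectivity are essentially formal once Corollary \ref{E-unitary3} and Theorem \ref{E-unitary2} are in hand.
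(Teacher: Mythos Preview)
Your argument is correct and follows essentially the same route as the paper: use Corollary~\ref{E-unitary3} to identify $\rho\cap\sigma$ with $\rho_\theta$, invoke Lemma~\ref{L2} for arbitrary joins, note that arbitrary meets are trivially preserved, and obtain surjectivity from $\gamma\subseteq\sigma$ for every idempotent pure $\gamma$. The paper's proof is more terse but uses exactly these ingredients.
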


\begin{proof} In view of Corollary \ref{E-unitary3},  $\rho_\theta=\rho\cap\sigma$ for every $\rho\in\mathcal{C}(A)$.~Hence $\chi$ is a complete $\vee$-homomorphism (by Lemma \ref{L2}).~It is evident that $\chi$ is a complete $\cap$-homomorphism. Finally, if $\rho$ is idempotent pure, then $\rho\subseteq\sigma$ and so $\rho\chi=\rho$. Thus $\chi$ maps $\mathcal{C}(A)$  onto the lattice of all idempotent pure congruences on $A$, as exactly required.
\end{proof}

We now investigate the $\theta$-classes of $A$.

\begin{lemma}\label{pomoc} In any completely inverse $AG^{**}$-groupoid $A$, $\mu_{A/\rho}=\mu(\rho)/\rho$ for every $\rho\in\mathcal{C}(A)$.~In particular, $[\rho/\rho,\rho^\theta/\rho]$ is the modular lattice of all idempotent-separating congruences on $A/\rho$ $(\rho\in\mathcal{C}(A))$.
\end{lemma}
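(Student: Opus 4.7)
The plan is to unwrap this as essentially a definitional equality followed by the standard correspondence between congruences on $A/\rho$ and congruences on $A$ containing $\rho$. Recall that $\mu(\rho)$ was defined by $\mu(\rho)=\{(a,b)\in A\times A:(a\rho,b\rho)\in\mu_{A/\rho}\}$, and that $\mu(\rho)\supseteq\rho$, so the quotient congruence $\mu(\rho)/\rho$ is well defined. From the definition,
\[
(a\rho,b\rho)\in\mu(\rho)/\rho\iff (a,b)\in\mu(\rho)\iff (a\rho,b\rho)\in\mu_{A/\rho},
\]
which gives the first assertion $\mu(\rho)/\rho=\mu_{A/\rho}$ directly.

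For the ``in particular'' clause, I would apply Theorem~\ref{mu}(c) to the completely inverse $AG^{**}$-groupoid $A/\rho$ (which is completely inverse by the remarks following Theorem~\ref{i-s}) to identify its idempotent-separating congruences as the interval $[1_{A/\rho},\mu_{A/\rho}]$, and then invoke Theorem~\ref{m} to see that this interval is a modular lattice. The correspondence theorem recalled just before Theorem~\ref{E-unitary congruences} supplies an order-preserving bijection $\upsilon\mapsto\upsilon/\rho$ between congruences on $A$ containing $\rho$ and congruences on $A/\rho$; this bijection sends $\rho$ to $\rho/\rho=1_{A/\rho}$ and, by the first part of the lemma together with the notational identification $\rho^\theta=\mu(\rho)$ introduced just before Theorem~\ref{main trace}, sends $\rho^\theta$ to $\mu_{A/\rho}$. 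Therefore the interval $[\rho/\rho,\rho^\theta/\rho]$ in $\mathcal{C}(A/\rho)$ coincides with $[1_{A/\rho},\mu_{A/\rho}]$, i.e., with the modular lattice of all idempotent-separating congruences on $A/\rho$.

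There is essentially no obstacle in this argument: the whole lemma is a direct consequence of the definition of $\mu(\rho)$ together with results already established (Theorems~\ref{mu}(c) and \ref{m}) applied to $A/\rho$ and transported back along the quotient correspondence. The only point to check with care is that $A/\rho$ is itself a completely inverse $AG^{**}$-groupoid, which was noted at the beginning of Section~3, and the minor notational matching $\rho^\theta=\mu(\rho)$.
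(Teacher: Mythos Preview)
Your argument is correct. In fact, your first paragraph is more direct than the paper's own proof: since $\mu(\rho)$ is \emph{defined} as the preimage of $\mu_{A/\rho}$ under the canonical map $A\to A/\rho$, the identity $\mu(\rho)/\rho=\mu_{A/\rho}$ is an instance of the correspondence theorem and needs no further work, exactly as you say.

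The paper instead argues by double inclusion through the characterisation of $\mu_{A/\rho}$ as the maximum idempotent-separating congruence: it first checks that $\mu(\rho)/\rho$ is idempotent-separating (hence contained in $\mu_{A/\rho}$), and then shows that any idempotent-separating $\gamma/\rho$ satisfies $\mathrm{tr}(\gamma)=\mathrm{tr}(\rho)$, whence $\gamma\subseteq\mu(\rho)$. This detour is unnecessary given the definition of $\mu(\rho)$, but it has the minor advantage of being a proof template that transfers verbatim to the analogous Lemma~\ref{pomoc1} for $\tau$. Your approach buys brevity; the paper's buys a parallel structure with the kernel-side lemma. For the ``in particular'' clause both you and the paper invoke Theorem~\ref{m} applied to $A/\rho$, together with the identification $\rho^\theta=\mu(\rho)$, so there is no difference there.
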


\begin{proof} It is easy to see that $\mu(\rho)/\rho$ is idempotent-separating, so $\mu(\rho)/\rho\subseteq\mu_{A/\rho}$. On the other hand, if $\gamma/\rho$, where $\rho\subseteq\gamma$, is an idempotent-separating congruence on $A/\rho$, then $\text{tr}(\gamma)\subseteq\text{tr}(\rho)$ and so $\text{tr}(\gamma)=\text{tr}(\rho)$. Hence $\rho\subseteq\gamma\subseteq\mu(\rho)$, therefore, $\gamma/\rho\subseteq\mu(\rho)/\rho$.~Thus $\mu_{A/\rho}=\mu(\rho)/\rho$.~The second part of the lemma follows from Theorem \ref{m}.
\end{proof}

The following theorem follows easily from the above lemma.

\begin{theorem}\label{trace class} Let $A$ be a completely inverse $AG^{**}$-groupoid, $\rho\in \mathcal{C}(A)$.~Define a map $\phi : [\rho_\theta, \rho^\theta] \to A/\rho_\theta$ by $\rho\phi = \rho/\rho_\theta$ for all $\rho\in [\rho_\theta, \rho^\theta]$.~Then $\phi$ is a complete isomorphism of the trace class $[\rho_\theta, \rho^\theta]$ onto the modular lattice of all idempotent-separating congruences on $A/\rho_\theta$.
\end{theorem}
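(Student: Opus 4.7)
The plan is to apply the Lattice Correspondence Theorem (the fourth isomorphism theorem, valid in any universal algebra): for any congruence $\alpha$ on a groupoid $A$, the map $\gamma\mapsto\gamma/\alpha$ is a complete lattice isomorphism of the interval $[\alpha,A\times A]$ in $\mathcal{C}(A)$ onto $\mathcal{C}(A/\alpha)$. This very fact is already used, and stated, in the paragraph preceding Theorem~\ref{E-unitary congruences}. Applied to $\alpha=\rho_\theta$, it gives a complete lattice isomorphism from $[\rho_\theta,A\times A]$ onto $\mathcal{C}(A/\rho_\theta)$; restricting to the interval $[\rho_\theta,\rho^\theta]$ yields a complete lattice isomorphism of $[\rho_\theta,\rho^\theta]$ onto the interval $[\rho_\theta/\rho_\theta,\rho^\theta/\rho_\theta]=[1_{A/\rho_\theta},\rho^\theta/\rho_\theta]$ in $\mathcal{C}(A/\rho_\theta)$. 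This is precisely the map $\phi$ in the statement, so $\phi$ is automatically a complete lattice isomorphism onto its image.

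It then remains to identify this image with the lattice of all idempotent-separating congruences on $A/\rho_\theta$. By Theorem~\ref{mu}(c) the latter lattice equals $[1_{A/\rho_\theta},\mu_{A/\rho_\theta}]$, so the task reduces to showing
\[
\rho^\theta/\rho_\theta \;=\; \mu_{A/\rho_\theta}.
\]
By Lemma~\ref{pomoc}, $\mu_{A/\rho_\theta}=\mu(\rho_\theta)/\rho_\theta=(\rho_\theta)^\theta/\rho_\theta$, so the equality above amounts to $(\rho_\theta)^\theta=\rho^\theta$. This is immediate from the characterization of $(\cdot)^\theta$ established just before Theorem~\ref{main trace}: $\rho^\theta$ is the maximum congruence with trace equal to $\mathrm{tr}(\rho)$. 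Since $\mathrm{tr}(\rho_\theta)=\mathrm{tr}(\rho)$ by the definition of $\rho_\theta$, the two maxima coincide.

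Putting the two steps together, $\phi$ is a complete lattice isomorphism of $[\rho_\theta,\rho^\theta]$ onto $[1_{A/\rho_\theta},\mu_{A/\rho_\theta}]$, which by Theorem~\ref{m} is a modular lattice and by Theorem~\ref{mu}(c) is exactly the lattice of idempotent-separating congruences on $A/\rho_\theta$. The argument is purely formal once Lemma~\ref{pomoc} is in place; there are no nontrivial computations, and the only point worth checking explicitly is the identification $(\rho_\theta)^\theta=\rho^\theta$, which is the mildest possible obstacle.
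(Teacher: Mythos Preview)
Your proof is correct and follows exactly the approach the paper intends: the paper simply says the theorem ``follows easily from the above lemma'' (Lemma~\ref{pomoc}), and your argument spells out precisely that deduction, including the one point the paper leaves implicit, namely $(\rho_\theta)^\theta=\rho^\theta$ via $\mathrm{tr}(\rho_\theta)=\mathrm{tr}(\rho)$.
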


\begin{remark} Note that $\phi_{\hspace{0.4mm}|\hspace{0.4mm}[\gamma,\hspace{0.3mm}\mu(\rho)]}$, where $\gamma\in\rho\theta$, is a complete isomorphism of the interval $[\gamma, \mu(\rho)]$ onto the lattice of all idempotent-separating congruences on $A/\gamma$.
\end{remark}

Recall that $A$ is \emph{fundamental} if and only if $\mu = 1_A$. By the above remark we have the following corollary.

\begin{corollary}\label{fundamental} Let $\rho$ be a congruence on  a completely inverse $AG^{**}$-groupoid \nolinebreak $A$. Then $A/\rho$ is fundamental if and only if $\rho=\mu(\rho)$.
\end{corollary}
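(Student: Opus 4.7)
The plan is to derive the corollary directly from Lemma \ref{pomoc}, which already establishes the key identification $\mu_{A/\rho} = \mu(\rho)/\rho$ for every $\rho\in\mathcal{C}(A)$. Since $A/\rho$ is fundamental precisely when $\mu_{A/\rho} = 1_{A/\rho}$, the task reduces to translating this condition through the quotient.

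First I would handle the forward direction. Assume $A/\rho$ is fundamental, i.e., $\mu_{A/\rho} = 1_{A/\rho}$. By Lemma \ref{pomoc} this means $\mu(\rho)/\rho = \rho/\rho$, which forces $\mu(\rho) \subseteq \rho$; combined with the trivial containment $\rho \subseteq \mu(\rho)$ (which follows from the defining equivalence $(a,b)\in\mu(\rho) \iff (aa^{-1},bb^{-1})\in\rho$, since $a\,\rho\,b$ implies $aa^{-1}\,\rho\,bb^{-1}$), we conclude $\rho = \mu(\rho)$.

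For the converse, assume $\rho = \mu(\rho)$. Then by Lemma \ref{pomoc}, $\mu_{A/\rho} = \mu(\rho)/\rho = \rho/\rho = 1_{A/\rho}$, so $A/\rho$ is fundamental by definition.

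I do not expect any real obstacle: both directions are one-line translations through the quotient, and all the work has already been absorbed into Lemma \ref{pomoc}. The only point meriting a brief mention is that $\rho \subseteq \mu(\rho)$ always holds, which was already noted in the paragraph introducing $\mu(\rho)$, so no separate verification is needed in the write-up.
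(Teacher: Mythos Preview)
Your argument is correct and matches the paper's approach: the paper simply cites the remark following Theorem~\ref{trace class} (itself a consequence of Lemma~\ref{pomoc}), which identifies $[\rho,\mu(\rho)]$ with the lattice of idempotent-separating congruences on $A/\rho$, so that $A/\rho$ is fundamental exactly when this interval collapses, i.e., $\rho=\mu(\rho)$. Your direct use of $\mu_{A/\rho}=\mu(\rho)/\rho$ is the same reasoning stripped to its core.
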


Denote by $\mathcal{FC}(A)$ the set of all fundamental congruences on $A$, that is,
$$\mathcal{FC}(A)=\{\mu(\rho):\rho\in\mathcal{C}(A)\}.$$

Since $1_A\subseteq\rho$, then $\mu=\mu(1_A)\subseteq\mu(\rho)$ for all $\rho\in\mathcal{C}(A)$, what means that $\mu$ is the least fundamental congruence on $A$. Also, from Lemma \ref{L2} follows that $\mathcal{FC}(A)$ is a complete $\cap$-sublattice of $\mathcal{C}(A)$.

We have just proved a part of the following theorem.

\begin{theorem} Let $A$ be a completely inverse $AG^{**}$-groupoid.~Then $\mathcal{FC}(A)$ is a complete $\cap$-sublattice of $\mathcal{C}(A)$ with the least element $\mu$ and the greatest element $A\times A$. For any nonempty family $\{\rho_i: i\in I\}$ of fundamental congruences on $A$, the join of $\{\rho_i: i\in I\}$ in $\mathcal{FC}(A)$ is given by $\mu (\bigvee\{\rho_i: i\in I\})$.~Also, $\mathcal{FC}(A)\cong\mathcal{C}(E_A)$.
\end{theorem}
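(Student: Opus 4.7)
The text preceding the theorem already establishes (via Lemma \ref{L2}) that $\mathcal{FC}(A)$ is closed under arbitrary intersections in $\mathcal{C}(A)$, with least element $\mu=\mu(1_A)$; the greatest element $A\times A=\mu(A\times A)$ is immediate. So what remains is (i) the formula for the join in $\mathcal{FC}(A)$, and (ii) the isomorphism $\mathcal{FC}(A)\cong\mathcal{C}(E_A)$. The strategy throughout is to reduce everything to statements about the operator $\rho\mapsto\rho^{\theta}=\mu(\rho)$ and the trace map $\Theta$ of Theorem \ref{main trace}.

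For the join formula, I would first record the key fact that $\mu(\rho)$ is itself fundamental for every $\rho\in\mathcal{C}(A)$. By Lemma \ref{pomoc}, $\mu(\rho)/\rho=\mu_{A/\rho}$, so $A/\mu(\rho)\cong (A/\rho)/\mu_{A/\rho}$, which is fundamental; equivalently, $\mu(\mu(\rho))=\mu(\rho)$ since $\rho^{\theta}$ depends only on $\mathrm{tr}(\rho)$, so $\mathcal{FC}(A)=\{\rho\in\mathcal{C}(A):\rho=\mu(\rho)\}$ by Corollary \ref{fundamental}. Now, given a nonempty family $\{\rho_i:i\in I\}\subseteq\mathcal{FC}(A)$, write $\nu=\bigvee\{\rho_i:i\in I\}$ (join in $\mathcal{C}(A)$). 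Monotonicity of $\rho\mapsto\rho^{\theta}$ (Lemma \ref{L1}) gives $\rho_i=\mu(\rho_i)\subseteq\mu(\nu)$, so $\mu(\nu)$ is a fundamental congruence above every $\rho_i$. Conversely, if $\kappa\in\mathcal{FC}(A)$ contains each $\rho_i$, then $\kappa\supseteq\nu$, whence $\kappa=\mu(\kappa)\supseteq\mu(\nu)$ by Lemma \ref{L1} again. Thus $\mu(\nu)$ is the join in $\mathcal{FC}(A)$.

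For the isomorphism, consider the restriction $\Theta'=\Theta_{\mid\mathcal{FC}(A)}:\mathcal{FC}(A)\to\mathcal{C}(E_A)$. Surjectivity is inherited from Theorem \ref{main trace}: given $\tau\in\mathcal{C}(E_A)$, pick any $\rho\in\mathcal{C}(A)$ with $\mathrm{tr}(\rho)=\tau$; then $\mu(\rho)\in\mathcal{FC}(A)$ and $\mathrm{tr}(\mu(\rho))=\mathrm{tr}(\rho)=\tau$. Injectivity is the crucial point, and it comes from the description of trace classes in Theorem \ref{main trace}: if $\rho_1,\rho_2\in\mathcal{FC}(A)$ share the same trace, they lie in the same $\theta$-class $[\sigma_\theta,\sigma^{\theta}]$ whose top element is their common $\sigma^{\theta}=\mu(\rho_1)=\mu(\rho_2)$; but $\rho_j=\mu(\rho_j)$ forces $\rho_1=\rho_2$. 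Finally, $\Theta'$ preserves arbitrary meets because intersections in $\mathcal{FC}(A)$ agree with those in $\mathcal{C}(A)$ and $\Theta$ is a complete lattice homomorphism; it preserves arbitrary joins because $\Theta'(\mu(\bigvee\rho_i))=\mathrm{tr}(\bigvee\rho_i)=\bigvee\mathrm{tr}(\rho_i)$, again by Theorem \ref{main trace}. A bijective complete lattice homomorphism is a complete lattice isomorphism, so $\mathcal{FC}(A)\cong\mathcal{C}(E_A)$.

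The only genuine obstacle is verifying that each $\theta$-class contains exactly one fundamental congruence; once that is in hand (from Corollary \ref{fundamental} together with the identification $\mathrm{tr}(\rho)=\mathrm{tr}(\mu(\rho))$), everything else is bookkeeping with the operators $(\cdot)_\theta$, $(\cdot)^{\theta}$, $\mu(\cdot)$ already developed in the paper.
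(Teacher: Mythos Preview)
Your argument is correct and follows essentially the same route as the paper: use Corollary \ref{fundamental} to identify $\mathcal{FC}(A)$ with the set of $\rho$ satisfying $\rho=\mu(\rho)=\rho^\theta$, exploit monotonicity of $\rho\mapsto\rho^\theta$ for the join formula, and restrict $\Theta$ from Theorem \ref{main trace} to obtain the isomorphism with $\mathcal{C}(E_A)$ (the paper's proof is simply terser, observing directly that $\mu(\rho_1)\neq\mu(\rho_2)$ forces $\mathrm{tr}(\rho_1)\neq\mathrm{tr}(\rho_2)$). One cosmetic remark: you use $\sigma$ and $\kappa$ as dummy variables for congruences, but in this paper $\sigma$ is the least $AG$-group congruence and $\kappa$ is the kernel equivalence on $\mathcal{C}(A)$; choose different letters to avoid a clash.
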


\begin{proof} Let $\emptyset\not=\{\rho_i: i\in I\}\subseteq\mathcal{FC}(A)$. Then
$$\Big(\bigvee\{\rho_i: i\in I\}, \mu\Big(\bigvee \{\rho_i: i\in I\}\Big)\Big)\in\theta.$$
On the other hand, if $\rho\in\Big[\bigvee\{\rho_i: i\in I\}, \mu(\bigvee \{\rho_i: i\in I\})\Big]$, then $\mu(\rho)=\rho$ if and only if $\rho=\mu(\bigvee \{\rho_i: i\in I\})$.~Consequently, $\mu(\bigvee \{\rho_i: i\in I\})$ is the join of $\{\rho_i: i\in I\}$ in $\mathcal{F}(S)$.

Finally, if $\mu(\rho_1)\neq\mu(\rho_2)$, where $\rho_1,\rho_2\in\mathcal{C}(A)$, then $\text{tr}(\rho_1)\neq\text{tr}(\rho_2)$, therefore, the restriction of the map $\Theta$ from Theorem \ref{main trace} to the set $\mathcal{FC}(A)$ is the required complete lattice isomorphism.
\end{proof}

\section{The kernel classes of $\mathcal{C}(A)$}

Let $A$ be an $AG^{**}$-groupoid. For every nonempty subset $Q$ of $A$ there exists an associated  equivalence relation $\mathcal{Q}$ on $A$ which is induced by the partition: $\{Q,A\setminus Q\}$. Define on $A$ an equivalence relation $\tau^Q$ by
$$
\tau^Q=\{(a,b)\in A\times A:(\forall x,y\in A^{1})~x(ay)\in Q\iff x(by)\in Q\},
$$
where $A^{1}=A\cup\{1\}$, $1\not\in A$ and $1a=a1=a$ for all $a\in A$.

Observe that if $(a,b)\in\tau^Q$, then putting $x=y=1$ in the definition of $\tau^Q$, we obtain that either $a,b\in Q$ or $a,b\notin Q$. Thus $\tau^Q\subseteq\mathcal{Q}$.

\begin{proposition}\label{saturates} Let $Q$ be a nonempty subset of an $AG^{**}$-groupoid $A$.~Then $\tau^Q$ is the largest congruence $\rho$ on $A$ for which $Q$ is the union of some $\rho$-classes.
\end{proposition}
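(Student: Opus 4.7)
The plan is to verify three things: $\tau^Q$ is an equivalence relation, $Q$ is a union of $\tau^Q$-classes, and $\tau^Q$ is compatible with the multiplication (hence a congruence); then show that every congruence saturating $Q$ is contained in $\tau^Q$.

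Reflexivity and symmetry of $\tau^Q$ are immediate from the biconditional in the definition; transitivity follows by chaining through a middle element. Setting $x=y=1$ shows $\tau^Q\subseteq\mathcal{Q}$, so $Q$ is a union of $\tau^Q$-classes. The largest-property also follows quickly: if $\rho$ is any congruence with $Q$ a union of $\rho$-classes and $(a,b)\in\rho$, then compatibility of $\rho$ (with $1$ interpreted as identity, so that the expressions evaluate inside $A$) yields $(x(ay),x(by))\in\rho$ for every $x,y\in A^{1}$, and saturation forces $x(ay)\in Q\iff x(by)\in Q$, whence $(a,b)\in\tau^Q$.

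The substantive step is compatibility. Fix $(a,b)\in\tau^Q$ and $c\in A$; I would show $(ca,cb)\in\tau^Q$ (right compatibility is symmetric). The decisive case is $x,y\in A$: one use of the $AG^{**}$-identity and one use of the medial law rewrite
\[
x((ca)y)=(ca)(xy)=(cx)(ay),
\]
so the expression is in the canonical form $u(av)$ with $u=cx\in A$ and $v=y\in A$. The defining biconditional of $\tau^Q$ applied at $(u,v)=(cx,y)$ gives $(cx)(ay)\in Q\iff(cx)(by)\in Q$, and the same rewrites in reverse convert the right-hand side back to $x((cb)y)$. Each of the three remaining sub-cases, where at least one of $x,y$ equals the formal identity $1$, is disposed of by an analogous manipulation: the expressions $ca$, $(ca)y$, and $x(ca)$ are each to be rewritten into a form $u(av)$ with $u,v\in A^{1}$ via a combination of the $AG$-law $(pq)r=(rq)p$, the $AG^{**}$-law $p(qr)=q(pr)$, and the medial and paramedial laws.

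The chief delicacy I foresee lies in the boundary sub-cases, where the $AG^{**}$-identity alone only shifts $a$ between symmetric inner positions (for instance $x(ca)=c(xa)$) and so does not by itself produce the canonical form matching the definition of $\tau^Q$; the proof will hinge on a careful interlocking of the $AG$, $AG^{**}$, medial, and paramedial laws, so that after the rewriting the element $a$ ends up in the inner left-hand slot required to invoke the defining biconditional, with the constants surrounding it belonging to $A^{1}$ and independent of $a$.
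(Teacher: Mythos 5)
Your handling of the equivalence-relation properties, of the inclusion $\tau^Q\subseteq\mathcal{Q}$, and of maximality is correct and coincides with the paper's argument, and your main compatibility computation for $x,y\in A$ (namely $x((ca)y)=(ca)(xy)=(cx)(ay)$ on the left, and $x((ac)y)=(ac)(xy)=((xy)c)a$ on the right) is exactly the computation the paper gives. The genuine gap is precisely the point you flag and then postpone: the sub-cases with $x=1$ or $y=1$. For right compatibility these can in fact be discharged: $x(ac)=a(xc)=1\cdot(a\cdot xc)$, $(ac)y=(yc)a=(yc)(a\cdot 1)$ and $ac=1\cdot(ac)$ are all of the canonical shape $x'(ay')$ with $x',y'\in A^{1}$. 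But for left compatibility the sub-case $x=1$, $y\in A$ --- the context $t\mapsto (ct)y$ --- cannot be reduced to the canonical shape by any ``interlocking of the $AG$, $AG^{**}$, medial and paramedial laws'': in a free $AG^{**}$-groupoid the only term equal to $(ca)y$ is $(ya)c$, and neither is of the form $x'(ay')$; consequently, taking $Q$ to be the complement of the single element $(ca)y$ one gets $a\,\tau^Q\,b$ while $(ca)y\notin Q$ and $(cb)y\in Q$, so $\tau^Q$ fails to be left compatible. The same obstruction occurs for the context $t\mapsto x(ca)=c(xa)$. So the promissory note in your last paragraph cannot be kept at the stated level of generality; this is a step that would fail, not merely a bookkeeping chore.

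What rescues the argument in the setting where the proposition is actually applied is the extra hypothesis that every element of $A$ is a product of two elements (as in a completely inverse $AG^{**}$-groupoid, where $y=(yy^{-1})y$): then $(ca)y=(ca)(y_1y_2)=(cy_1)(ay_2)$ by the medial law, and $x(ca)=x((c_1c_2)a)=x((ac_2)c_1)=(ac_2)(xc_1)=((xc_1)c_2)a$ by the left invertive and $AG^{**}$ laws, so every context does reduce to the canonical shape. You should either add such a hypothesis or restrict the statement accordingly. For what it is worth, the paper's own proof has the same lacuna: the displayed chain $x(ac\cdot y)=(ac)(xy)$ is an instance of the $AG^{**}$-identity only when $x,y\in A$ (for $y=1$ it would assert $x(ac)=(ac)x$), and the adjoined identity is never addressed; your instinct that the boundary cases are the delicate ones is exactly right, but your proposed remedy does not exist without a further assumption on $A$.
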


\begin{proof} Let $(a,b)\in\tau^Q,x,y\in A^1$ and $c\in A$. Observe that
$$
x(ac\cdot y)=(ac)(xy)=(xy\cdot c)a.
$$
Hence if  $x(ac\cdot y)\in Q$, then $(xy\cdot c)b\in Q$, since $(a,b)\in\tau^Q$. Thus we get $x(bc\cdot y)\in Q$. By symmetry, we conclude that  $\tau^Q$ is right compatible. Further, the equality
$$
x(ca\cdot y)=(ca)(xy)=(cx)(ay)
$$
implies that $\tau^Q$ is also left compatible.~Consequently, $\tau^Q$ is a congruence on $A$ and $Q$ is the union of some $\tau^Q$-classes, since $\tau^Q\subseteq\mathcal{Q}$.~Finally, if $\rho$ is any congruence on $A$ for which $Q$ is the union of some $\rho$-classes, then $\rho\subseteq\mathcal{Q}$. Hence if $(a,b)\in\rho$, then either $a,b\in Q$ or $a,b\notin Q$.~Thus for all $x,y\in A^1$, $x(ay)\in Q\Leftrightarrow x(by)\in Q$, so $a~\tau^Q~b$.~Consequently, $\rho\subseteq\tau^Q$.
 \end{proof}

\begin{corollary} In any completely inverse $AG^{**}$-groupoid $A$, the relation $\tau\hspace{0.15mm}^{E_A}$ is the largest idempotent pure congruence on $A$.
\end{corollary}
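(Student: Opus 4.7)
The plan is to derive this as an immediate consequence of Proposition \ref{saturates} applied to the subset $Q = E_A$, once we observe that the property ``idempotent pure'' is exactly the saturation property that Proposition \ref{saturates} governs.

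First I would establish the following elementary equivalence for a congruence $\rho$ on the completely inverse $AG^{**}$-groupoid $A$: $\rho$ is idempotent pure if and only if $E_A$ is the union of some $\rho$-classes. The forward direction is immediate, since $\rho$ being idempotent pure means $e\rho \subseteq E_A$ for every $e \in E_A$, and then $E_A = \bigcup_{e \in E_A} e\rho$ displays $E_A$ as a union of $\rho$-classes. For the converse, if $E_A$ is a union of $\rho$-classes, then for each $e \in E_A$ the class $e\rho$ must lie entirely inside $E_A$, which is exactly the definition of idempotent purity.

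Given this equivalence, the corollary follows directly from Proposition \ref{saturates}: specializing $Q := E_A$, that proposition tells us that $\tau^{E_A}$ is the largest congruence on $A$ for which $E_A$ is a union of congruence classes, and by the equivalence just established this is precisely the largest idempotent pure congruence on $A$.

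There is no real obstacle here; the only thing to be careful about is the trivial but necessary remark that $E_A$ is nonempty (so that $\tau^{E_A}$ is defined at all), which is guaranteed by $aa^{-1} \in E_A$ for every $a \in A$ in a completely inverse $AG^{**}$-groupoid. I would then simply state: by Proposition \ref{saturates} and the equivalence above, $\tau^{E_A}$ is the largest idempotent pure congruence on $A$.
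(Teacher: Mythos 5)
Your proof is correct and is exactly the argument the paper intends: the corollary is stated as an immediate consequence of Proposition \ref{saturates} with $Q=E_A$, using precisely the observation that a congruence is idempotent pure if and only if $E_A$ is saturated by (i.e., a union of) its classes. The remark that $E_A\neq\emptyset$ because $aa^{-1}\in E_A$ is a sensible extra precaution.
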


We shall write $\tau$ instead of $\tau\hspace{0.15mm}^{E_A}$, or $\tau_A$ if necessary.

Let $\rho$ be a congruence on $A$, where $A$ denotes (unless otherwise stated) an arbitrary completely inverse $AG^{**}$-groupoid. Put
$$
\tau(\rho)=\{(a,b)\in A\times A:(a\rho,b\rho)\in\tau_{A/\rho}\}.
$$
Clearly, $\tau(\rho)\in\mathcal{C}(A)$ and $\rho\subseteq\tau(\rho)$. Using Theorem \ref{i-s}, one can prove without difficulty that $\tau(\rho)=\tau\hspace{0.2mm}^{\text{ker}(\rho)}$. Thus $\tau(\rho)$ is the maximum congruence with respect to ker$(\rho)$. Denote it by $\rho^\kappa$.

Further, put $\rho_\kappa=\rho\cap\mu$. Then $\text{ker}(\rho_\kappa)=\text{ker}(\rho)$, since $\mu$ is a semilattice congruence. On the other hand, $\mu$ is idempotent-separating, so $\rho_\kappa$ is the minimum congruence with respect to ker$(\rho)$.

Finally if $K$ is a seminormal completely inverse $AG^{**}$-subgroupoid of $A$. Then the pair $(K,1_{E_A})$ is a congruence pair for $A$, since then the condition $(CP)$ is trivially met for this pair, and $\text{ker}(\rho_{(K,1_{E_A})})=K$.~Consequently, $K$ is seminormal if and only if $K$ is a kernel of some congruence on $A$.~Denote by $\mathcal{SN}(A)$ the set of seminormal completely inverse $AG^{**}$-subgroupoids of $A$. It is easy to see that $\mathcal{SN}(A)$ is a lattice under inclusion.

It is clear that if $\emptyset\not=\{\rho_i: i\in I\}\subseteq\mathcal{C}(A)$, then
$$
\text{ker}\Big(\bigcap\{\rho_i:i\in I\}\Big)=\bigcap\{\text{ker}(\rho_i):i\in I\},
$$
therefore, we have just proved the following theorem.

\begin{theorem}\label{main kernel} Let $A$ be an arbitrary completely inverse $AG^{**}$-groupoid.~Define a map $K:\mathcal{C}(A)\to\mathcal{P}(A)$ by
$$
\rho\hspace{0.1mm}K=\text{ker}(\rho)~~(\rho\in\mathcal{C}(A)).
$$
Then $K$ is a complete lattice $\cap$-homomorphism of $\mathcal{C}(A)$ onto $\mathcal{SN}(A)$.~Also, if $\kappa$ denotes the $\cap$-congruence on $\mathcal{C}(A)$ induced by $K$, that is,
$$
\kappa=\{(\rho_1,\rho_2)\in\mathcal{C}(A)\times\mathcal{C}(A):\ker(\rho_1)=\ker(\rho_2)\},
$$
then for ever $\rho\in\mathcal{C}(A)$,
$$\rho\kappa=[\rho_\kappa,\rho^\kappa]$$
is a complete sublattice of $\,\mathcal{C}(A)$.
\end{theorem}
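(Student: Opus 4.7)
The plan is to organise the proof around two claims: that $K$ is a complete $\cap$-homomorphism onto $\mathcal{SN}(A)$, and that the $\kappa$-class of an arbitrary $\rho$ coincides with the interval $[\rho_\kappa,\rho^\kappa]$. Most of the ingredients have already been collected in the paragraphs preceding the theorem; the work is mainly to assemble them.

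For the first claim, $\cap$-preservation is precisely the equality $\ker(\bigcap_i\rho_i)=\bigcap_i\ker(\rho_i)$ recorded just before the theorem and read off from the definition of $\ker$. Surjectivity onto $\mathcal{SN}(A)$ I would obtain from Theorem~\ref{ACP}: given $K\in\mathcal{SN}(A)$, the pair $(K,1_{E_A})$ is a congruence pair (the clause $(CP)$ is vacuously fulfilled because $1_{E_A}$ is the diagonal on $E_A$), so the associated congruence $\rho_{(K,1_{E_A})}$ has kernel $K$. This already entails that the equivalence $\kappa$ induced by $K$ is compatible with $\cap$.

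For the second claim, I would first check that $\rho_\kappa$ and $\rho^\kappa$ both lie in $\rho\kappa$. Since $A/\mu\cong E_A$ is a semilattice, every element of $A/\mu$ is idempotent, so $\ker(\mu)=A$; then $\cap$-preservation yields $\ker(\rho_\kappa)=\ker(\rho)\cap A=\ker(\rho)$. For $\rho^\kappa=\tau(\rho)=\tau^{\ker(\rho)}$, the inclusion $\ker(\rho)\subseteq\ker(\tau(\rho))$ is immediate from $\rho\subseteq\tau(\rho)$; for the reverse inclusion, given $a\in\ker(\tau(\rho))$ I would apply Theorem~\ref{i-s} to the completely inverse $AG^{**}$-groupoid $A/\tau(\rho)$ to find $e\in E_A$ with $(a,e)\in\tau(\rho)=\tau^{\ker(\rho)}$, then specialise $x=y=1$ in the defining clause of $\tau^{\ker(\rho)}$ to obtain $a\in\ker(\rho)\iff e\in\ker(\rho)$, and conclude using $E_A\subseteq\ker(\rho)$.

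Once both endpoints belong to $\rho\kappa$, the inclusion $[\rho_\kappa,\rho^\kappa]\subseteq\rho\kappa$ is immediate from the monotonicity of $K$, which sandwiches $\ker(\gamma)$ between two copies of $\ker(\rho)$. For the reverse, I would take $\gamma\in\rho\kappa$: the inclusion $\gamma\subseteq\rho^\kappa$ is given by Proposition~\ref{saturates}, since $\ker(\gamma)=\ker(\rho)$ is automatically a union of $\gamma$-classes; and the inclusion $\rho_\kappa\subseteq\gamma$ follows from Theorem~\ref{kernel-trace}, because for $(a,b)\in\rho\cap\mu$ one has both $aa^{-1}=bb^{-1}$ (so the trace condition for $\gamma$ is trivial) and $ab^{-1}\in\ker(\rho\cap\mu)=\ker(\rho)=\ker(\gamma)$. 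That $[\rho_\kappa,\rho^\kappa]$ is a complete sublattice of $\mathcal{C}(A)$ is automatic from its being an interval in a complete lattice. The step I expect to require the most care is the verification $\ker(\tau(\rho))\subseteq\ker(\rho)$, which forces a combined appeal to idempotent-surjectivity of the quotient $A/\tau(\rho)$ together with the precise form of the relation $\tau^{Q}$.
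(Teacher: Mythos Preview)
Your proposal is correct and follows essentially the same route as the paper, which assembles the theorem from the paragraphs immediately preceding it: the $\cap$-identity for kernels, the congruence pair $(K,1_{E_A})$ for surjectivity, the identification $\tau(\rho)=\tau^{\ker(\rho)}$ together with Proposition~\ref{saturates} for the maximum $\rho^\kappa$, and the fact that $\mu$ is idempotent-separating (equivalently, $\text{tr}(\rho\cap\mu)=1_{E_A}$) combined with Theorem~\ref{kernel-trace} for the minimum $\rho_\kappa$. Your verification of $\ker(\tau(\rho))\subseteq\ker(\rho)$ via idempotent-surjectivity of $A/\tau(\rho)$ is fine, though note that once $\tau(\rho)=\tau^{\ker(\rho)}$ is in hand you can shortcut it: by Proposition~\ref{saturates}, $\ker(\rho)$ is a union of $\tau^{\ker(\rho)}$-classes, and since $E_A\subseteq\ker(\rho)$ every idempotent class lies inside $\ker(\rho)$.
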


We call the classes of $\kappa$ in the above theorem, the \emph{kernel classes} of $A$.

\begin{example}\label{counterex} The following example shows that ker$(\rho) \subseteq \text{ker}(\gamma)$ (or even $\rho\subseteq \gamma$) does not imply (in general) that $\rho^\kappa \subseteq \gamma^\kappa$.~\hspace{0.3mm}Indeed, let $A = \{a, b, e, f\}$ be a commutative inverse semigroup with the multiplication table given below:

$$
\begin{array}{|c||c|c|c|c|}
\hline
\cdot & a & b & e & f\\
\hline
\hline
a & e & e & a & a\\
\hline
b & e & f & a & b\\
\hline
e & a & a & e & e\\
\hline
f & a & b & e & f\\
\hline
\end{array}
$$
Then clearly $1_A \subseteq \rho = 1_A \cup \{(a, e), (e, a)\}$. On the other hand, $\rho^\kappa = \rho \cup \{(b, f), (f, b)\}$ and $1_A^\kappa = 1_A \cup \{(e, f), (f, e), (a, b), (b, a)\}$ and so  $1_A^\kappa = \tau \nsubseteq \rho^\kappa = \tau (\rho)$. Notice also that $\tau \cap \tau (\rho) = 1_A$.
\end{example}

Using Theorem \ref{kernel-trace} one can easily prove the following proposition.

\begin{proposition}\label{decomposition}If $\rho$ is a congruence  on a completely inverse $AG^{**}$-groupoid, then
$$\rho=\rho_\theta\vee\rho_\kappa=\rho^\theta\cap\rho^\kappa.$$
\end{proposition}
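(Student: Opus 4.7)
The plan is to use Theorem \ref{kernel-trace}, which says that a congruence on a completely inverse $AG^{**}$-groupoid is determined uniquely by its kernel and trace (in fact, inclusion between congruences is equivalent to inclusion between both kernels and traces). So to prove each of the two equalities, it suffices to verify that the two sides have the same kernel and the same trace.

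For the intersection formula $\rho=\rho^\theta\cap\rho^\kappa$, I would first observe the two routine identities
\[
{\rm tr}(\alpha\cap\beta)={\rm tr}(\alpha)\cap{\rm tr}(\beta),\qquad \ker(\alpha\cap\beta)=\ker(\alpha)\cap\ker(\beta),
\]
valid for arbitrary $\alpha,\beta\in\mathcal{C}(A)$ (the second follows at once from $a\in\ker(\gamma)\iff(a,a^2)\in\gamma$). Applying these with $\alpha=\rho^\theta$, $\beta=\rho^\kappa$, and using ${\rm tr}(\rho^\theta)={\rm tr}(\rho)$, $\ker(\rho^\kappa)=\ker(\rho)$, together with $\rho\subseteq\rho^\theta,\rho^\kappa$ (which gives the reverse inclusions ${\rm tr}(\rho)\subseteq{\rm tr}(\rho^\kappa)$ and $\ker(\rho)\subseteq\ker(\rho^\theta)$), I obtain ${\rm tr}(\rho^\theta\cap\rho^\kappa)={\rm tr}(\rho)$ and $\ker(\rho^\theta\cap\rho^\kappa)=\ker(\rho)$. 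Theorem \ref{kernel-trace} then yields the equality.

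For the join formula $\rho=\rho_\theta\vee\rho_\kappa$, the trace side is handled by Theorem \ref{main trace}: since $\Theta:\rho\mapsto{\rm tr}(\rho)$ is a complete lattice homomorphism,
\[
{\rm tr}(\rho_\theta\vee\rho_\kappa)={\rm tr}(\rho_\theta)\vee{\rm tr}(\rho_\kappa)={\rm tr}(\rho)\vee{\rm tr}(\rho_\kappa)={\rm tr}(\rho),
\]
the last equality because $\rho_\kappa\subseteq\rho$ forces ${\rm tr}(\rho_\kappa)\subseteq{\rm tr}(\rho)$. The kernel side is where a little more care is needed, because Theorem \ref{main kernel} only gives $K$ as a $\cap$-homomorphism, so I cannot dualise the above argument. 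Instead I would use monotonicity in both directions: on the one hand $\rho_\kappa\subseteq\rho_\theta\vee\rho_\kappa$ gives $\ker(\rho)=\ker(\rho_\kappa)\subseteq\ker(\rho_\theta\vee\rho_\kappa)$; on the other, $\rho_\theta,\rho_\kappa\subseteq\rho$ gives $\rho_\theta\vee\rho_\kappa\subseteq\rho$, hence $\ker(\rho_\theta\vee\rho_\kappa)\subseteq\ker(\rho)$. Equality follows, and another application of Theorem \ref{kernel-trace} finishes the proof.

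The only mild obstacle is the asymmetry just noted: for traces one has a true lattice homomorphism, while for kernels one only has a $\cap$-homomorphism, so the kernel computation for the join has to be done by the sandwich argument above rather than by a direct distributive formula. Once that is recognised the whole argument is just a clean application of the kernel-trace uniqueness theorem.
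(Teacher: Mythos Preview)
Your proof is correct and follows exactly the route the paper indicates (``using Theorem~\ref{kernel-trace}''): verify that both sides have the same trace and the same kernel, then invoke kernel--trace uniqueness. One small simplification: for the join you do not actually need the lattice homomorphism $\Theta$ of Theorem~\ref{main trace}; the same sandwich argument you used for the kernel works for the trace, since $\rho_\theta\subseteq\rho_\theta\vee\rho_\kappa\subseteq\rho$ already gives ${\rm tr}(\rho)={\rm tr}(\rho_\theta)\subseteq{\rm tr}(\rho_\theta\vee\rho_\kappa)\subseteq{\rm tr}(\rho)$.
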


We now investigate the $\kappa$-classes of $A$.

\begin{lemma}\label{pomoc1} In any completely inverse $AG^{**}$-groupoid $A$, $\tau_{A/\rho}=\tau(\rho)/\rho$ for every $\rho\in\mathcal{C}(A)$.~In particular, $[\rho/\rho,\rho^\kappa/\rho]$ is the  lattice of all idempotent pure congruences on $A/\rho$ $(\rho\in\mathcal{C}(A))$.
\end{lemma}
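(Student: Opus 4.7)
The plan is to mirror the proof of Lemma \ref{pomoc} on the kernel side. For the first equality, since $\rho\subseteq\tau(\rho)$ by the very definition of $\tau(\rho)$, the quotient $\tau(\rho)/\rho$ makes sense, and
$$
(a\rho, b\rho)\in\tau(\rho)/\rho \iff (a,b)\in\tau(\rho) \iff (a\rho, b\rho)\in\tau_{A/\rho},
$$
so $\tau(\rho)/\rho = \tau_{A/\rho}$ is just an unpacking of the definition of $\tau(\rho)$.

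For the ``in particular'' clause, I would invoke the standard correspondence for congruences on a quotient (as spelled out in the paragraph preceding Theorem \ref{E-unitary congruences}): congruences on $A/\rho$ are in an order-preserving bijection with the interval $[\rho, A\times A]\subseteq\mathcal{C}(A)$ via $\gamma\mapsto\gamma/\rho$. Under this bijection I need to identify which $\gamma$'s produce the idempotent pure congruences on $A/\rho$.

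The key observation is that in any completely inverse $AG^{**}$-groupoid $B$, the idempotent pure congruences form exactly the interval $[1_B,\tau_B]$: by the corollary following Proposition \ref{saturates}, $\tau_B$ is itself idempotent pure and is the largest such congruence on $B$; conversely, if $\rho_1\subseteq\rho_2$ and $\rho_2$ is idempotent pure, then for every $e\in E_B$, $e\rho_1\subseteq e\rho_2\subseteq E_B$, so $\rho_1$ is idempotent pure as well. Applying this observation to $B=A/\rho$ and combining it with the first part, the idempotent pure congruences on $A/\rho$ are exactly those of the form $\gamma/\rho$ with $\gamma/\rho\subseteq\tau_{A/\rho}=\tau(\rho)/\rho$, i.e.\ with $\rho\subseteq\gamma\subseteq\tau(\rho)=\rho^\kappa$. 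Under the correspondence, this set is precisely $[\rho/\rho,\rho^\kappa/\rho]$.

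The proof is essentially definitional; the only point that needs care is the verification that being idempotent pure is inherited downward under inclusion of congruences (so that $[1_{A/\rho},\tau_{A/\rho}]$ really is the whole lattice of idempotent pure congruences on $A/\rho$, not merely a subset), but this is the one-line argument given above, and I do not expect any genuine obstacle.
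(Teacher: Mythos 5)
Your proof is correct and follows essentially the same route as the paper: the paper likewise identifies the idempotent pure congruences on $A/\rho$ with the interval $[\rho,\tau(\rho)]$ under the correspondence $\gamma\mapsto\gamma/\rho$, using that $\tau_{A/\rho}$ is the largest idempotent pure congruence on the quotient. The only cosmetic difference is that you obtain $\tau_{A/\rho}=\tau(\rho)/\rho$ by unpacking the definition of $\tau(\rho)$, whereas the paper re-derives it by two inclusions (showing $\tau(\rho)/\rho$ is idempotent pure, and that any idempotent pure $\gamma/\rho$ forces $\ker(\gamma)=\ker(\rho)$ and hence $\gamma\subseteq\tau(\rho)$); both arguments are valid.
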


\begin{proof} One can easily see that $\tau(\rho)/\rho$ is idempotent pure and so $\tau(\rho)/\rho\subseteq\tau_{A/\rho}$. On the other hand, if $\gamma/\rho$, where $\rho\subseteq\gamma$, is idempotent pure, then $\text{ker}(\gamma)\subseteq\text{ker}(\rho)$, therefore, $\text{ker}(\gamma)=\text{ker}(\rho)$.~Hence $\rho\subseteq\gamma\subseteq\tau(\rho)$, so $\gamma/\rho\subseteq\tau(\rho)/\rho$.~Consequently, $\tau_{A/\rho}=\tau(\rho)/\rho$.
\end{proof}

From the above lemma follows the following theorem.

\begin{theorem}\label{kernel class} Let $A$ be a completely inverse $AG^{**}$-groupoid, $\rho\in \mathcal{C}(A)$.~Define a map $\phi : [\rho_\kappa, \rho^\kappa] \to A/\rho_\kappa$ by $\rho\phi = \rho/\rho_\kappa$ for all $\rho\in [\rho_\kappa, \rho^\kappa]$.~Then $\phi$ is a complete isomorphism of the kernel class $[\rho_\kappa, \rho^\kappa]$ onto the lattice of all idempotent pure congruences on $A/\rho_\kappa$.
\end{theorem}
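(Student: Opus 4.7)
The plan is to apply Lemma~\ref{pomoc1} with $\rho_\kappa$ in place of $\rho$, combined with the correspondence theorem for congruences recalled just before Theorem~\ref{E-unitary congruences}, which asserts that $\gamma\mapsto\gamma/\rho_\kappa$ is a complete lattice isomorphism from the interval $[\rho_\kappa,A\times A]$ in $\mathcal{C}(A)$ onto $\mathcal{C}(A/\rho_\kappa)$. Since this global correspondence is already a complete lattice isomorphism, its restriction $\phi$ to any subinterval is automatically a complete lattice embedding, so all that remains is to pin down the image of $[\rho_\kappa,\rho^\kappa]$ under $\phi$.

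The one bookkeeping observation I need is that $(\rho_\kappa)^\kappa=\rho^\kappa$. This is immediate from the definition of $\rho_\kappa=\rho\cap\mu$ together with the fact, already noted in the paragraph introducing $\rho_\kappa$, that $\ker(\rho_\kappa)=\ker(\rho)$ (because $\mu$ is idempotent-separating). Hence $\rho$ and $\rho_\kappa$ lie in the same $\kappa$-class of $\mathcal{C}(A)$ and therefore share the same maximum element in that class, namely $\rho^\kappa$.

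With this identification, Lemma~\ref{pomoc1} applied to $\rho_\kappa$ gives directly that $\tau_{A/\rho_\kappa}=\tau(\rho_\kappa)/\rho_\kappa=\rho^\kappa/\rho_\kappa$, and that the lattice of all idempotent pure congruences on $A/\rho_\kappa$ coincides with the interval $[\rho_\kappa/\rho_\kappa,\rho^\kappa/\rho_\kappa]$. By the correspondence theorem this is exactly the image under $\phi$ of the interval $[\rho_\kappa,\rho^\kappa]$, so $\phi$ is a bijection onto the lattice of idempotent pure congruences on $A/\rho_\kappa$; being the restriction of a complete lattice isomorphism, it is itself a complete isomorphism of lattices.

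The main (and really only) obstacle is the identity $(\rho_\kappa)^\kappa=\rho^\kappa$; once it is noted, the rest is formal quotient theory plus Lemma~\ref{pomoc1}. The argument is the exact mirror of the proof of Theorem~\ref{trace class}, with $\kappa$ in the role that $\theta$ played there.
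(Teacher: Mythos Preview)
Your proof is correct and matches the paper's approach exactly: the paper simply states that the theorem follows from Lemma~\ref{pomoc1}, and you have written out precisely the details that make this work---namely the correspondence theorem and the observation that $(\rho_\kappa)^\kappa=\rho^\kappa$ because $\ker(\rho_\kappa)=\ker(\rho)$. This is indeed the mirror of Theorem~\ref{trace class}, as you note.
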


Note that $\phi_{\hspace{0.4mm}|\hspace{0.4mm}[\gamma,\hspace{0.3mm}\tau(\rho)]}$, where $\gamma\in\rho\kappa$, is a complete isomorphism of the interval $[\gamma, \tau(\rho)]$ onto the lattice of all idempotent pure congruences on $A/\gamma$.

Recall that $A$ is \emph{$E$-disjunctive} if and only if $\tau = 1_A$. By the above remark we have the following corollary.

\begin{corollary}\label{E-disjunctive} Let $\rho$ be a congruence on  a completely inverse $AG^{**}$-groupoid \nolinebreak $A$. Then $A/\rho$ is $E$-disjunctive if and only if $\rho=\tau(\rho)$.
\end{corollary}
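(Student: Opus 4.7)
The plan is to derive this corollary directly from Lemma \ref{pomoc1} by translating the defining condition for $E$-disjunctive through the correspondence between congruences on $A$ containing $\rho$ and congruences on $A/\rho$. The statement is essentially a restatement of the lemma at the level of the quotient.

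First I would note that by the definition of $E$-disjunctivity, $A/\rho$ is $E$-disjunctive if and only if $\tau_{A/\rho}=1_{A/\rho}$. Then I would invoke Lemma \ref{pomoc1}, which yields $\tau_{A/\rho}=\tau(\rho)/\rho$. Therefore $A/\rho$ is $E$-disjunctive if and only if $\tau(\rho)/\rho=1_{A/\rho}$.

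Finally, I would observe that via the standard correspondence theorem for congruences (as spelled out right before Theorem \ref{E-unitary congruences} in the excerpt), the quotient $\tau(\rho)/\rho$ equals the identity congruence on $A/\rho$ precisely when $\tau(\rho)=\rho$. Combining the two equivalences yields the corollary.

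There is no real obstacle here; the whole argument is a one-line consequence of Lemma \ref{pomoc1} plus the correspondence theorem, mirroring exactly the derivation of Corollary \ref{fundamental} from Lemma \ref{pomoc} in the previous section. The only mild care needed is to make sure the remark immediately preceding the corollary (about $\phi_{|[\gamma,\tau(\rho)]}$) is not actually required; it is not, since the equivalence we want is just the statement that the bottom of the interval $[\rho,\tau(\rho)]/\rho$ coincides with its top exactly when the interval collapses.
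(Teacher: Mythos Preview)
Your proof is correct and essentially the same as the paper's: the paper derives the corollary from the remark preceding it (which itself is an immediate consequence of Lemma \ref{pomoc1}), while you go straight to Lemma \ref{pomoc1} and the correspondence theorem. Both amount to the observation that $\tau_{A/\rho}=\tau(\rho)/\rho$ equals $1_{A/\rho}$ exactly when $\tau(\rho)=\rho$.
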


\begin{remark} Note that in view of the end of Example \ref{counterex}, the set of all $E$-disjunctive congruences on a commutative inverse semigroup (in particular, on a completely inverse $AG^{**}$-groupoid) $A$ does not form (in general) a sublattice of $\mathcal{C}(A)$.

Also, a completely inverse $AG^{**}$-groupoid $A$ is an $AG$-group if and only if $A$ is both $E$-unitary and $E$-disjunctive.

Finally, notice that a congruence $\rho$ on $A$ is idempotent pure if and only if $\rho\cap\mu=1_A$. In particular, $\tau\cap\mu=1_A$, therefore, $A$ is a subdirect product of $A/\tau$ and $E_A$, where $A/\tau$ is an $E$-disjunctive completely inverse $AG^{**}$-groupoid.
\end{remark}

\medskip
Finally, we go back to study the lattice $\mathcal{U}(A)$ of all $E$-unitary congruences on a completely inverse $AG^{**}$-groupoid $A$. First, we prove the following useful result.

\begin{lemma}\label{lemma} The following conditions are valid for a congruence $\rho$ on a completely inverse $AG^{**}$-groupoid $A$\emph{:}

$(a)$ \ $\rho\vee\sigma=\sigma\rho\hspace{0.3mm}\sigma;$

$(b)$ \ $a(\rho\vee\sigma)b\Leftrightarrow (ea)\rho(eb)$ for some $e\in E_A;$

$(c)$ \ $\ker(\rho\vee\sigma)=(\ker(\rho))\omega$.
\end{lemma}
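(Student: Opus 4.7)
The plan is to prove part (b) first, from which (a) and (c) follow cleanly. Define the auxiliary relation
$$\lambda = \{(a,b) \in A\times A : (ea)\rho(eb) \text{ for some } e \in E_A\}$$
and show that $\lambda = \rho\vee\sigma = \sigma\rho\sigma$.

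First I would check that $\lambda$ is a congruence. Reflexivity uses $e = aa^{-1}$; symmetry is immediate. For transitivity, given $(ea)\rho(eb)$ and $(fb)\rho(fc)$, multiply the first by $f$ and the second by $e$, then use Proposition \ref{semilattice} (and commutativity of idempotents) to rewrite $f(ea) = (fe)a = (ef)a$ and $e(fb) = (ef)b$, obtaining $(ef)a \,\rho\, (ef)b \,\rho\, (ef)c$. Left compatibility of $\lambda$ follows from the $AG^{**}$ identity $c(ea) = e(ca)$, applied to $(ea)\rho(eb)$. Right compatibility is the key calculation: multiply $(ea)\rho(eb)$ by $ec$ on the right and use the medial law to get
$$(ea)(ec) = (ee)(ac) = e(ac), \qquad (eb)(ec) = e(bc),$$
so $e(ac)\,\rho\,e(bc)$, i.e., $ac \,\lambda\, bc$.

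Next show the inclusions $\rho\cup\sigma\subseteq\lambda\subseteq\sigma\rho\sigma\subseteq\rho\vee\sigma$. If $a\rho b$, pick any $e\in E_A$ and use left compatibility of $\rho$ to get $(ea)\rho(eb)$; if $a\sigma b$, then $ea=eb$ trivially gives $(ea)\rho(eb)$. Conversely, if $(ea)\rho(eb)$, note that $a\,\sigma\,ea$ via the witness $e$ (since $e\cdot a = e\cdot ea$ by Proposition \ref{semilattice}), and similarly $eb\,\sigma\,b$, giving $a\,\sigma(ea)\,\rho(eb)\,\sigma\,b$, hence $a(\sigma\rho\sigma)b\subseteq \rho\vee\sigma$ by transitivity of $\rho\vee\sigma$. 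This forces $\lambda = \rho\vee\sigma = \sigma\rho\sigma$, establishing (a) and (b) simultaneously.

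For (c), apply (b) to the case $b = f\in E_A$: then $a\in\ker(\rho\vee\sigma)$ iff there exist $e,f\in E_A$ with $(ea)\rho(ef)$, and since $ef\in E_A$ this is equivalent to $ea\in\ker(\rho)$ for some $e\in E_A$. By the characterisation of the natural partial order on $A$ recalled before Proposition \ref{omega}, this says exactly $a\in(\ker(\rho))\omega$.

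The main obstacle is verifying right compatibility of $\lambda$; the apparent difficulty is that multiplying $(ea)\rho(eb)$ by $c$ on the right produces $(ea)c\,\rho\,(eb)c$, which by the $AG$-identity equals $(ce)a\,\rho\,(ce)b$, and $ce$ need not be idempotent. The trick is instead to multiply by $ec$, so that the medial law collapses $(ea)(ec)$ to $e(ac)$, keeping the same idempotent $e$ as witness; the rest of the argument is routine once $\lambda$ is known to be a congruence.
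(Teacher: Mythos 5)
Your proof is correct, and it supplies in full the details that the paper only sketches by reference to Petrich's Lemma III.5.4 and Corollary III.5.5. The computational ingredients are exactly the ones the paper alludes to: Proposition \ref{semilattice} (to rewrite $f(ea)=(fe)a$ and $e(ea)=ea$), commutativity of idempotents, and the medial law for right compatibility. The only organisational difference is the direction of the argument: the paper (following Petrich) would first establish $(a)$ by showing that $\sigma\rho\hspace{0.3mm}\sigma$ is already transitive and compatible, and then read off the description $(b)$; you instead take the description in $(b)$ as the definition of an auxiliary relation $\lambda$, verify directly that $\lambda$ is a congruence, and sandwich $\rho\cup\sigma\subseteq\lambda\subseteq\sigma\rho\hspace{0.3mm}\sigma\subseteq\rho\vee\sigma$ to get $(a)$ and $(b)$ simultaneously. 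This buys a slightly cleaner derivation of $(c)$, since the kernel computation reduces immediately to the explicit form of $\lambda$ together with the description of $B\omega$ via the natural partial order; one should just note (as you implicitly do via $e(ea)=ea$) that $ea\in\ker(\rho)$ with witness $g\in E_A$ can be upgraded to $(ea)\rho(eg)$ so that the witnessing idempotent in $(b)$ is the same $e$. Your identification of right compatibility as the one nontrivial step, resolved by multiplying by $ec$ rather than $c$, is exactly the right observation.
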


\begin{proof} Using Proposition \ref{semilattice}, we may show, in a very similar way like in the proof of Lemma III.5.4$(i)$ \cite{Pet}, the condition $(a)$.~Furthermore, the condition $(b)$ follows directly from Proposition \ref{semilattice} and $(a)$.~Finally, the proof of $(c)$ is closely similar to the corresponding proof of Corollary III.5.5 \cite{Pet}.
\end{proof}

Using Proposition \ref{semilattice} and Lemma \ref{lemma}$(b)$, we are able to show the following theorem.

\begin{theorem}\label{onto} Let $A$ be an arbitrary completely inverse $AG^{**}$-groupoid.~Then the map $\phi:\mathcal{C}(A)\to\mathcal{C}(A)$ defined by
$$
\rho\phi=\rho\vee\sigma
$$
is a homomorphism of $\mathcal{C}(A)$ onto the lattice $[\sigma,A\times A]$ of all $AG$-group congruences on $A$.
\end{theorem}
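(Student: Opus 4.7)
The plan is to verify directly that $\phi$ is a lattice homomorphism whose image is exactly $[\sigma, A\times A]$. First I would check that $\phi$ is well-defined and surjective onto $[\sigma, A\times A]$. Well-definedness is automatic since the join of two congruences on $A$ is again a congruence. For surjectivity, note that $\sigma \subseteq \rho\phi = \rho\vee\sigma$ for every $\rho\in\mathcal{C}(A)$, so the image of $\phi$ lies in $[\sigma, A\times A]$. Conversely, every $\gamma\in[\sigma, A\times A]$ satisfies $\gamma\phi = \gamma\vee\sigma = \gamma$ because $\sigma\subseteq\gamma$, so $\gamma$ is a fixed point of $\phi$ and hence lies in the image. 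That $\phi$ preserves joins is immediate from the associativity and idempotency of $\vee$: $(\rho_1\vee\rho_2)\phi=(\rho_1\vee\rho_2)\vee\sigma=(\rho_1\vee\sigma)\vee(\rho_2\vee\sigma)=\rho_1\phi\vee\rho_2\phi$.

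The main step, and the only nontrivial one, is to show that $\phi$ preserves intersections, i.e.\ $(\rho_1\cap\rho_2)\vee\sigma=(\rho_1\vee\sigma)\cap(\rho_2\vee\sigma)$ for all $\rho_1,\rho_2\in\mathcal{C}(A)$. The inclusion $\subseteq$ is clear from monotonicity of $\phi$ (which itself follows from the fact that $\rho\mapsto\rho\vee\sigma$ is isotone). For the reverse inclusion, suppose $(a,b)\in(\rho_1\vee\sigma)\cap(\rho_2\vee\sigma)$. By Lemma \ref{lemma}(b), there exist $e_1,e_2\in E_A$ with $(e_1a)\,\rho_1\,(e_1b)$ and $(e_2a)\,\rho_2\,(e_2b)$. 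Put $e=e_1e_2\in E_A$. Using Proposition \ref{semilattice} we have
$$ea=(e_1e_2)a=e_2(e_1a)\quad\text{and}\quad eb=e_2(e_1b),$$
so applying $e_2$ on the left to $(e_1a)\,\rho_1\,(e_1b)$ yields $(ea)\,\rho_1\,(eb)$. Symmetrically, since $e_1e_2=e_2e_1$ on $E_A$, we also obtain $ea=e_1(e_2a)$, $eb=e_1(e_2b)$, and hence $(ea)\,\rho_2\,(eb)$. Therefore $(ea)\,(\rho_1\cap\rho_2)\,(eb)$, and a second application of Lemma \ref{lemma}(b) gives $(a,b)\in(\rho_1\cap\rho_2)\vee\sigma$, which is the desired inclusion.

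The only real obstacle is this meet-preservation step; everything else is bookkeeping. The key reason it works is that the join with $\sigma$ reduces to ``becoming $\rho$-related after multiplication by a suitable idempotent'' (Lemma \ref{lemma}(b)), and idempotents can be combined via Proposition \ref{semilattice} to serve both congruences simultaneously. Combining the three verifications, $\phi$ is a complete lattice homomorphism of $\mathcal{C}(A)$ onto $[\sigma,A\times A]=\mathcal{GC}(A)$.
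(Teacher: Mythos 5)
Your proof is correct and follows exactly the route the paper indicates: the only substantive point is meet-preservation, which you handle via Lemma \ref{lemma}$(b)$ together with Proposition \ref{semilattice} (and commutativity of $E_A$) to combine the two idempotents $e_1,e_2$ into the single idempotent $e=e_1e_2$ serving both congruences. One minor caveat: your closing sentence claims a \emph{complete} lattice homomorphism, but you verified only binary meets and joins, and the meet argument does not obviously extend to infinite families (it would require one idempotent working for all congruences simultaneously); since the theorem asserts only a lattice homomorphism, this does not affect correctness.
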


Define  the relation $\bar{\sigma}$ on $\mathcal{C}(A)$ by putting
$$
(\rho_1, \rho_2) \in \bar{\sigma}\Leftrightarrow\rho_1\vee\sigma = \rho_2 \vee \sigma .
$$
 In the light of the above theorem, $\bar{\sigma}$ is a congruence on $\mathcal{C}(A)$, since $\phi \phi^{- 1} = \bar{\sigma}$.

\begin{proposition} Let $A$ be a completely inverse $AG^{**}$-groupoid and $\rho \in \mathcal{C}(A)$. Then the elements $\rho, \pi_{\rho}$ and $\rho \vee \sigma$ are $\bar{\sigma}$-equivalent and $\rho \subseteq \pi_{\rho} \subseteq \rho \vee \sigma$.~Moreover, the element $\rho \vee \sigma$ is the largest in the $\bar{\sigma}$-class $\rho \bar{\sigma}$.
\end{proposition}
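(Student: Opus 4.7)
The plan is to chain together three easy facts: that $\pi_\rho$ exists as the least $E$-unitary congruence containing $\rho$ (by Corollary \ref{C3}), that $\rho\vee\sigma$ is itself $E$-unitary, and that the $\bar\sigma$-class of $\rho$ admits $\rho\vee\sigma$ as a top element essentially by construction.

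First I would verify the chain of inclusions $\rho\subseteq\pi_\rho\subseteq\rho\vee\sigma$. The left inclusion is immediate from the definition of $\pi_\rho$ given just before Corollary \ref{C3}. For the right inclusion, I would invoke Theorem \ref{onto}: since $\rho\vee\sigma$ lies in the image of $\phi$, it is an $AG$-group congruence, so $A/(\rho\vee\sigma)$ is an $AG$-group. The paper already notes (right after Theorem \ref{E-unitary2}) that $AG$-groups are $E$-unitary completely inverse $AG^{**}$-groupoids, hence $\rho\vee\sigma\in\mathcal{U}(A)$. Because $\pi_\rho$ is by definition the least element of $\mathcal{U}(A)$ containing $\rho$, we conclude $\pi_\rho\subseteq\rho\vee\sigma$.

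Next I would check the $\bar\sigma$-equivalence by joining the inclusion chain with $\sigma$:
\begin{equation*}
\rho\vee\sigma\;\subseteq\;\pi_\rho\vee\sigma\;\subseteq\;(\rho\vee\sigma)\vee\sigma\;=\;\rho\vee\sigma,
\end{equation*}
which forces all three joins to coincide with $\rho\vee\sigma$; by the defining condition of $\bar\sigma$ this means $\rho,\pi_\rho,\rho\vee\sigma$ all lie in the same $\bar\sigma$-class.

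Finally, to see that $\rho\vee\sigma$ is the largest element of $\rho\bar\sigma$, I would take an arbitrary $\gamma\in\rho\bar\sigma$; then $\gamma\vee\sigma=\rho\vee\sigma$ by the definition of $\bar\sigma$, and so $\gamma\subseteq\gamma\vee\sigma=\rho\vee\sigma$, as required. There is no real technical obstacle here — the whole proposition is a formal corollary of Theorem \ref{onto} combined with the fact that $AG$-groups are $E$-unitary; the only step that requires alertness is recognising that membership in the image of $\phi$ in Theorem \ref{onto} gives $E$-unitarity, rather than trying to verify $E$-unitarity of $\rho\vee\sigma$ directly from Lemma \ref{lemma}.
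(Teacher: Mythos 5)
Your proposal is correct and follows essentially the same route as the paper: establish $\rho\subseteq\pi_\rho\subseteq\rho\vee\sigma$ from the minimality of $\pi_\rho$ and the $E$-unitarity of the $AG$-group congruence $\rho\vee\sigma$, then squeeze $\pi_\rho\vee\sigma$ between $\rho\vee\sigma$ and itself. The only difference is that you spell out why $\rho\vee\sigma$ is $E$-unitary (it lies in $[\sigma,A\times A]=\mathcal{GC}(A)$ and $AG$-groups are $E$-unitary), which the paper simply asserts.
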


\begin{proof} Since  $\pi_{\rho}$ is the least $E$-unitary congruence containing $\rho$ and $\rho \vee \sigma$  is $E$-unitary, then $\rho \subseteq \pi_{\rho} \subseteq \rho \vee \sigma$. Hence we get $\rho \vee \sigma \subseteq \pi_{\rho} \vee \sigma\subseteq \rho \vee \sigma$, so $\rho \vee \sigma = \pi_{\rho} \vee \sigma$, therefore, $(\rho, \pi_{\rho}) \in \bar{\sigma}$. Evidently, $(\rho, \rho \vee \sigma) \in \bar{\sigma}$.~This implies the first part of the proposition. The second part is clear.
\end{proof}

Further, let $a,b\in A$ and $\rho\in\mathcal{C}(A)$. If $(a,b)\in\sigma$, then evidently $(a\rho)\sigma(b\rho)$ in $S/\rho$. If in addition, $\rho\subseteq\sigma$, then $(a\rho)\sigma(b\rho)$ in $S/\rho$ implies that $(a,b)\in\sigma$ in $S$.~It follows that $A/\sigma\cong (A/\rho)/\sigma$, i.e., $A$ and $A/\rho$ have isomorphic maximal $AG$-group homomorphic images. In that case, we may say that $\rho$ \emph{preserves the maximal $AG$-group homomorphic images}.~Since for every $\rho\in\mathcal{C}(A)$ we have $\rho_\theta\subseteq\rho$, then we obtain the following factorization:
$$A\to A/\rho_\theta\to A/\rho \cong (A/\rho_\theta)/(\rho/\rho_\theta).$$

Using the obvious terminology, we have the following proposition.

\begin{proposition} Every homomorphism of completely inverse $AG^{**}$-groupoids can be factored into a homomorphism preserving the maximal $AG$-group homomorphic images and an idempotent-separating homomorphism.
\end{proposition}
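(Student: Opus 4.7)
The plan is to package the factorization that the paragraph immediately preceding the proposition already spells out. Given a homomorphism $\varphi:A\to B$ of completely inverse $AG^{**}$-groupoids, replace $\varphi$ by the natural projection $A\to A/\rho$ where $\rho$ is the congruence induced by $\varphi$ (the image factor $A/\rho\hookrightarrow B$ is harmless for the claim). I will then split this projection as the two-step composite
$$A \longrightarrow A/\rho_{\theta} \longrightarrow (A/\rho_{\theta})/(\rho/\rho_{\theta}) \cong A/\rho,$$
and verify that the first arrow preserves the maximal $AG$-group homomorphic images, while the second is idempotent-separating.

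For the first arrow, I would show that $\rho_{\theta}\subseteq\sigma$. By Lemma \ref{L3} we have $\sigma=(A\times A)_{\theta}$, and by Lemma \ref{L1} the assignment $\rho\mapsto\rho_{\theta}$ is isotone, so $\rho\subseteq A\times A$ gives $\rho_{\theta}\subseteq(A\times A)_{\theta}=\sigma$. According to the discussion directly above the proposition, the condition $\rho_{\theta}\subseteq\sigma$ is precisely what is needed to conclude $A/\sigma\cong(A/\rho_{\theta})/\sigma$, i.e., that the projection $A\to A/\rho_{\theta}$ preserves the maximal $AG$-group homomorphic image.

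For the second arrow, observe that $\operatorname{tr}(\rho_{\theta})=\operatorname{tr}(\rho)$ by the definition of $\rho_{\theta}$, so $\rho$ lies in its own trace class $\rho\theta=[\rho_{\theta},\rho^{\theta}]$. Theorem \ref{trace class} then identifies $\rho/\rho_{\theta}$ with an idempotent-separating congruence on $A/\rho_{\theta}$ (equivalently, Lemma \ref{pomoc} gives $\mu_{A/\rho_{\theta}}=\rho^{\theta}/\rho_{\theta}$, and $\rho/\rho_{\theta}$ sits inside this interval). Hence the induced map $A/\rho_{\theta}\to(A/\rho_{\theta})/(\rho/\rho_{\theta})$ is idempotent-separating, completing the factorization.

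I do not foresee a real obstacle: the statement is essentially a bookkeeping corollary of the trace-class machinery already developed, together with the isotonicity $\rho\mapsto\rho_{\theta}$. The only point requiring a moment of care is matching the paper's informal notion of a homomorphism "preserving the maximal $AG$-group homomorphic images" with the concrete identity $A/\sigma\cong(A/\rho_{\theta})/\sigma$, and this is exactly what the containment $\rho_{\theta}\subseteq\sigma$ delivers via the remarks preceding the proposition.
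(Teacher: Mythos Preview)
Your proposal is correct and follows exactly the route the paper intends: the factorization $A\to A/\rho_\theta\to A/\rho$ displayed just before the proposition, with $\rho_\theta\subseteq\sigma$ (via Lemmas \ref{L1} and \ref{L3}) handling the first factor and the trace-class machinery (Theorem \ref{trace class} or Lemma \ref{pomoc}) handling the second. The paper's own proof simply defers to the analogous argument in Petrich's book, so you have effectively unpacked what that reference would contain.
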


\begin{proof} The proof is similar to the proof of Proposition III.5.10 \cite{Pet}.
\end{proof}

The following theorem gives another equivalent conditions for a congruence to be $E$-unitary (cf.~the end of Section $4$).

\begin{theorem}\label{ME} Let $\rho$ be a congruence on a completely inverse $AG^{**}$-groupoid $A$.~Then the following conditions are equivalent\emph{:}
\par {\indent\rm$(a)$} \ $\rho$ is $E$-unitary\emph{;}
\par {\indent\rm$(b)$} \ $\ker(\rho)$ is closed\emph{;}
\par {\indent\rm$(c)$} \ $\ker(\rho) =\ker(\rho\vee\sigma);$
\par {\indent\rm$(d)$} \ $\rho\vee\sigma=\tau(\rho);$
\par {\indent\rm$(e)$} \ $\tau(\rho)\in\mathcal{GC}(A)$.
\end{theorem}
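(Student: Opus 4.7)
The plan is to run the cycle $(a) \Leftrightarrow (b) \Leftrightarrow (c) \Rightarrow (d) \Rightarrow (e) \Rightarrow (b)$, which closes all the equivalences with minimal overhead. The two easiest links come first: $(a) \Leftrightarrow (b)$ is precisely Proposition \ref{U(A)}, and $(b) \Leftrightarrow (c)$ is immediate from Lemma \ref{lemma}$(c)$, which gives $\ker(\rho\vee\sigma)=(\ker(\rho))\omega$; since closedness of $\ker(\rho)$ is by definition the assertion $(\ker(\rho))\omega=\ker(\rho)$, the two conditions say the same thing.

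The implication $(c) \Rightarrow (d)$ is the substantive step. Assuming $\ker(\rho\vee\sigma)=\ker(\rho)$, I will first use the fact (established just before Theorem \ref{main kernel}) that $\tau(\rho)=\rho^{\kappa}$ is the \emph{maximum} congruence on $A$ with kernel equal to $\ker(\rho)$; this forces $\rho\vee\sigma\subseteq\tau(\rho)$. Consequently $\sigma\subseteq\rho\vee\sigma\subseteq\tau(\rho)$, which already places $\tau(\rho)$ in $\mathcal{GC}(A)$. At this point $\rho\vee\sigma$ and $\tau(\rho)$ are both $AG$-group congruences with the same kernel, so by the uniqueness in Theorem \ref{AG-group congruences} (the map $N\mapsto\rho_N$ is a bijection onto $\mathcal{GC}(A)$) they must coincide, giving $(d)$.

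The remaining arrows are short. For $(d)\Rightarrow(e)$, the congruence $\rho\vee\sigma$ contains $\sigma$ and is therefore automatically an $AG$-group congruence, so $\tau(\rho)=\rho\vee\sigma\in\mathcal{GC}(A)$. For $(e)\Rightarrow(b)$, if $\tau(\rho)\in\mathcal{GC}(A)$ then Theorem \ref{AG-group congruences} yields $\ker(\tau(\rho))\in\mathcal{N}(A)$, and every normal subgroupoid is closed by definition; since $\tau(\rho)$ has the same kernel as $\rho$, this gives $(b)$ and closes the cycle. The only genuine obstacle is the $(c)\Rightarrow(d)$ step, where one has to combine the maximality of $\tau(\rho)$ among congruences with the prescribed kernel with the uniqueness supplied by Theorem \ref{AG-group congruences}; the rest of the argument is essentially a bookkeeping exercise applying Proposition \ref{U(A)}, Lemma \ref{lemma}$(c)$ and the defining property of $\tau(\rho)$.
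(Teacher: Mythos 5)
Your proof is correct and follows essentially the same route as the paper: $(a)\Leftrightarrow(b)$ via Proposition \ref{U(A)}, $(b)\Leftrightarrow(c)$ via Lemma \ref{lemma}$(c)$, the key step $(c)\Rightarrow(d)$ via the maximality of $\tau(\rho)$ among congruences with kernel $\ker(\rho)$ combined with the uniqueness in Theorem \ref{AG-group congruences}, and the closing arrows exactly as in the paper. Your write-up is in fact cleaner than the paper's (which mislabels some implications), but the mathematical content is identical.
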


\begin{proof} In the light of Proposition \ref{U(A)}, $(a)$ and $(b)$ are equivalent.

$(b)\implies (c)$. This follows from Lemma \ref{lemma}$(c)$.

$(c)\implies (d)$. Indeed, $\text{ker}(\tau(\rho))=\text{ker}(\rho)=\text{ker}(\rho\lor\sigma)$ and so $\rho\vee\sigma\subseteq\tau(\rho)$, therefore, $\tau(\rho)\in\mathcal{GC}(A)$.

$(d)\implies (a)$. Let $\tau(\rho)\in\mathcal{GC}(A)$. Then $\tau(\rho)$ is $E$-unitary. Since the conditions $(a)$ and $(b)$ are equivalent, we get $\text{ker}(\rho)=\text{ker}(\tau(\rho))$ is closed. Thus $\rho$ is $E$-unitary.

$(c)\implies (d)$. By the above $\rho\lor\sigma\subseteq\tau(\rho)$ and so $\rho\lor\sigma, \tau(\rho)\in\mathcal{GC}(S)$. Further\-more, $\text{ker}(\tau(\rho))=\text{ker}(\rho)=\text{ker}(\rho\lor\sigma)$. Hence $\rho\lor\sigma=\tau(\rho)$ (by Theorem \ref{AG-group congruences}).

$(d)\implies (e)$. This is trivial.
\end{proof}

In view of the above theorem, Theorem \ref{main kernel} and Corollary \ref{C3},
$$\rho_N\kappa=\{\rho_N\cap\nu:\mu\subseteq\nu\}=[\rho_N\cap\mu,\rho_N]$$
for every $N\lhd A$. Consequently,
$$
\mathcal{U}(A)=\bigcup_{N\hspace{0.2mm}\lhd\hspace{0.35mm}A}\{\rho_N\cap\nu:\mu\subseteq\nu\}.
$$
Thus we have the following statement (see the end of Section $4$).

\begin{proposition}\label{pi_rho} Let $\rho$ be a congruence on a completely inverse $AG^{**}$-groupoid \nolinebreak $A$. Then$:$

$(a)$ \ $\rho\vee\mu=\mu\rho\mu;$

$(b)$ \ $a(\rho\vee\mu)b\Leftrightarrow (aa^{-1})\rho(bb^{-1});$

$(c)$ \ $\pi_\rho=\sigma\rho\hspace{0.3mm}\sigma\cap\mu\rho\mu$.
\end{proposition}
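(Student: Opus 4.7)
The plan is to prove (a) and (b) together via a short chain argument, then deduce (c) from the structural description of $E$-unitary congruences in Theorem \ref{E-unitary congruences} combined with Lemma \ref{lemma}(a).

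For (b), the key identity is that $a\,\mu\,aa^{-1}$ for every $a\in A$, which was already observed in the proof of Theorem \ref{mu}(a). For the forward direction, note that $\rho\vee\mu$ is a congruence, so if $a(\rho\vee\mu)b$ then $aa^{-1}(\rho\vee\mu)bb^{-1}$; to upgrade this to $aa^{-1}\rho\, bb^{-1}$ I would take a chain $a=c_0,c_1,\dots,c_n=b$ with consecutive pairs in $\rho\cup\mu$ and apply $x\mapsto xx^{-1}$: whenever $c_i\,\mu\,c_{i+1}$ the images are equal, and whenever $c_i\,\rho\,c_{i+1}$ they are $\rho$-related, so by transitivity $aa^{-1}\,\rho\,bb^{-1}$. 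Conversely, if $aa^{-1}\,\rho\,bb^{-1}$, the three-step chain
$$a\;\mu\;aa^{-1}\;\rho\;bb^{-1}\;\mu\;b$$
witnesses $a(\mu\rho\mu)b\subseteq\rho\vee\mu$.

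For (a), the inclusion $\mu\rho\mu\subseteq\rho\vee\mu$ is immediate (since $\mu,\rho\subseteq\rho\vee\mu$ and $\rho\vee\mu$ is transitive). For the reverse inclusion, I would apply (b): whenever $a(\rho\vee\mu)b$, the chain displayed above puts $(a,b)$ into $\mu\rho\mu$. Thus $\rho\vee\mu=\mu\rho\mu$.

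For (c), recall from Theorem \ref{E-unitary congruences} that a congruence $\pi$ on $A$ is $E$-unitary if and only if $\pi=\rho_N\cap\upsilon$ for some $N\lhd A$ and some semilattice congruence $\upsilon$. Now $\rho\vee\sigma$ contains $\sigma$ and therefore lies in $\mathcal{GC}(A)$, while $\rho\vee\mu$ contains $\mu$ and is therefore a semilattice congruence; hence $(\rho\vee\sigma)\cap(\rho\vee\mu)$ is itself $E$-unitary and clearly contains $\rho$. On the other hand, if $\pi\supseteq\rho$ is any $E$-unitary congruence, writing $\pi=\rho_N\cap\upsilon$ as above forces $\rho_N\supseteq\rho\vee\sigma$ and $\upsilon\supseteq\rho\vee\mu$, so $\pi\supseteq(\rho\vee\sigma)\cap(\rho\vee\mu)$. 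This identifies $\pi_\rho$ with $(\rho\vee\sigma)\cap(\rho\vee\mu)$, and Lemma \ref{lemma}(a) together with part (a) of the present proposition rewrite this as $\sigma\rho\hspace{0.3mm}\sigma\cap\mu\rho\mu$. The main obstacle I anticipate is simply the chain verification for the forward direction of (b); everything else is bookkeeping with the earlier results.
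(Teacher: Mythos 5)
Your proposal is correct and follows essentially the same route as the paper: the backward chain $a\;\mu\;aa^{-1}\;\rho\;bb^{-1}\;\mu\;b$ and the collapsing map $x\mapsto xx^{-1}$ are exactly the mechanism the paper uses (it merely proves (a) first by verifying transitivity of $\mu\rho\mu$ directly and then reads off (b), whereas you derive (b) first), and your proof of (c) — showing $(\rho\vee\sigma)\cap(\rho\vee\mu)$ is an $E$-unitary congruence containing $\rho$ and is below every $\rho_N\cap\upsilon\supseteq\rho$ — matches the paper's argument, differing only in that the paper certifies $E$-unitarity via the closedness of the kernel rather than by citing Theorem \ref{E-unitary congruences} directly. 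No gaps.
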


\begin{proof} $(a)$. It is clear that $\mu\rho\mu\subseteq\rho\vee\mu$ is a reflexive, symmetric and compatible relation on $A$. We show that it is also transitive. Let $a(\mu\rho\mu)b(\mu\rho\mu)c$. Then there exist elements $r,s,t,w\in A$ such that
$$aa^{-1}=rr^{-1},\ \ (r,s)\in\rho, \ \ ss^{-1}=bb^{-1},$$
$$bb^{-1}=tt^{-1}, \ \ (t,w)\in\rho, \ \ ww^{-1}=cc^{-1}.$$
Also, $(rr^{-1})\rho(ss^{-1})=(tt^{-1})\rho(ww^{-1})$. Consequently,
$$a~\mu(aa^{-1})=(rr^{-1})\rho(ww^{-1})=(cc^{-1})\mu~c.$$
Hence $(a,c)\in\mu\rho\mu$, as required, and so $\mu\rho\mu$ is a congruence on $A$ contained in $\rho\vee\mu$. Since evidently $\rho,\mu\subseteq\mu\rho\mu$, then $(a)$ holds.

$(b)$. $(\Longrightarrow)$. Let $a(\rho\hspace{0.35mm}\vee\hspace{0.2mm}\mu)b$. Then by $(a)$, $aa^{-1}=cc^{-1},(c,d)\in\rho$ and $dd^{-1}=bb^{-1}$ for some $c,d\in A$. Hence $(cc^{-1})\rho(dd^{-1})$. Thus $(aa^{-1})\rho(bb^{-1})$.

$(\Longleftarrow)$. If $(aa^{-1})\rho(bb^{-1})$, then $a~\mu(aa^{-1})\rho(bb^{-1})\mu~b$. Thus $a(\rho\vee\mu)b$.

$(c)$. In the light of Lemma \ref{lemma} $(a)$ and the condition $(b)$, $\alpha=\sigma\rho\hspace{0.3mm}\sigma\cap\mu\rho\mu$ is a congruence on $A$. It is evident that $\rho\subseteq\alpha$ and ker$(\alpha)=$ ker$(\sigma\rho\hspace{0.3mm}\sigma)$, therefore, $\alpha$ is an $E$-unitary congruence on $A$ which contains $\rho$. Finally, let $\rho\subseteq\beta=\rho_N\cap\nu\in\mathcal{U}(A)$, where $N\lhd A$ and $\mu\subseteq\nu$. Then $\rho\vee\sigma\subseteq\rho_N\vee\sigma=\rho_N$ and $\rho\vee\mu\subseteq\nu\vee\mu=\nu$. It follows that $\alpha\subseteq\rho_N\cap\nu=\beta$, as required.
\end{proof}

Using the condition $(b)$ one can prove the following theorem.

\begin{theorem}Let $A$ be an arbitrary completely inverse $AG^{**}$-groupoid.~Then the map $\phi:\mathcal{C}(A)\to\mathcal{C}(A)$ defined by
$$\rho\phi=\rho\vee\mu$$
is a homomorphism of $\,\mathcal{C}(A)$ onto the lattice $[\mu,A\times A]$ of semilattice congruences on $A$.
\end{theorem}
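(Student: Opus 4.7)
The plan is to use Proposition \ref{pi_rho}(b) as the principal tool, exactly parallel to how Theorem \ref{onto} was obtained from Lemma \ref{lemma}(b). First I would record the characterisation
$$
(a,b)\in\rho\vee\mu\iff (aa^{-1},bb^{-1})\in\rho,
$$
which is Proposition \ref{pi_rho}(b); this reduces every question about $\rho\vee\mu$ to a question about the behaviour of $\rho$ on idempotents, and in particular shows that $\rho\vee\mu$ depends on $\rho$ only through $\mathrm{tr}(\rho)$.

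Next I would verify that $\phi$ really does map into $[\mu,A\times A]$: each $\rho\vee\mu$ contains $\mu$, and since $\mu$ is a semilattice congruence (Theorem \ref{mu}(a)) so is every congruence containing it, because the quotient of a semilattice is a semilattice. For surjectivity onto $[\mu,A\times A]$, observe that by Theorem \ref{mu}(a), $\mu$ is the \emph{least} semilattice congruence, so any semilattice congruence $\nu$ satisfies $\nu\supseteq\mu$ and hence $\nu\phi=\nu\vee\mu=\nu$.

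Then I would check that $\phi$ is a lattice homomorphism. Join preservation is immediate: $(\rho_1\vee\rho_2)\vee\mu=(\rho_1\vee\mu)\vee(\rho_2\vee\mu)$, and the same argument works for arbitrary joins, giving completeness of the $\vee$-part. For meet preservation one inclusion $(\rho_1\cap\rho_2)\vee\mu\subseteq(\rho_1\vee\mu)\cap(\rho_2\vee\mu)$ is trivial. The reverse inclusion is where the displayed characterisation does the real work: if $(a,b)\in(\rho_1\vee\mu)\cap(\rho_2\vee\mu)$, then $(aa^{-1},bb^{-1})\in\rho_1$ and $(aa^{-1},bb^{-1})\in\rho_2$, so $(aa^{-1},bb^{-1})\in\rho_1\cap\rho_2$, and another application of the characterisation yields $(a,b)\in(\rho_1\cap\rho_2)\vee\mu$. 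The same argument handles arbitrary intersections.

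The only genuine obstacle is the $\cap$-preservation, and it is dispatched by the characterisation of $\rho\vee\mu$ via idempotents; everything else is formal. I would finish by remarking that the argument shows $\phi$ is in fact a complete lattice homomorphism, in strict analogy with Theorem \ref{onto}, whose proof via Lemma \ref{lemma}(b) serves as the template here.
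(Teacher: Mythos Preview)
Your proposal is correct and follows precisely the approach the paper indicates: the paper's entire proof is the single sentence ``Using the condition $(b)$ one can prove the following theorem,'' referring to Proposition~\ref{pi_rho}(b), and you have supplied exactly the routine details that this hint calls for. Your observation that the argument actually yields a complete lattice homomorphism is a harmless strengthening, in full analogy with Theorem~\ref{onto}.
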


Let $\rho\in\mathcal{C}(A)$.~Since $\rho\subseteq A\times A$, then there is the least semilattice congruence $\mu_\rho$ containing $\rho$ (note that $\mu_\rho=\pi_\rho$, see the proof of Proposition \ref{pi_rho}$(c)$).

\medskip
Define the relation $\overline{\mu}$ on $\mathcal{C}(A)$ by putting
$$
(\rho_1, \rho_2) \in \overline{\mu}\Leftrightarrow\rho_1\vee\mu = \rho_2 \vee \mu.
$$
In view of the above theorem, $\overline{\mu}$ is a congruence on $\mathcal{C}(A)$.

\begin{proposition} Let $A$ be a completely inverse $AG^{**}$-groupoid, $\rho \in \mathcal{C}(A)$. Then the elements $\rho, \mu_{\rho}$ and $\rho \vee \mu$ are $\overline{\mu}$-equivalent and $\rho \subseteq \mu_{\rho} \subseteq \rho \vee \mu$.~Moreover, the element $\rho \vee \mu$ is the largest in the $\overline{\mu}$-class $\rho \overline{\mu}$.
\end{proposition}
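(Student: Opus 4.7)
The plan is to mirror, almost verbatim, the proof of the immediately preceding proposition (for $\pi_\rho$ and $\overline{\sigma}$), substituting the role of $\sigma$ with $\mu$ and that of ``$E$-unitary congruence'' with ``semilattice congruence''. The key enabling fact I would invoke is the theorem just above, which identifies $[\mu, A\times A]$ as precisely the lattice of all semilattice congruences on $A$; consequently, for any $\rho \in \mathcal{C}(A)$, the congruence $\rho \vee \mu$ lies in $[\mu, A\times A]$ and is therefore itself a semilattice congruence.

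First, since $\rho \vee \mu$ is a semilattice congruence containing $\rho$ and $\mu_\rho$ is by definition the least such congruence, I would immediately deduce the chain
$$\rho \subseteq \mu_\rho \subseteq \rho \vee \mu.$$
Joining each term with $\mu$ on the right and using the monotonicity of $\vee$ together with the idempotency $\mu \vee \mu = \mu$ would give
$$\rho \vee \mu \subseteq \mu_\rho \vee \mu \subseteq (\rho \vee \mu) \vee \mu = \rho \vee \mu,$$
so all three agree. This yields $(\rho, \mu_\rho) \in \overline{\mu}$ at once, while $(\rho, \rho \vee \mu) \in \overline{\mu}$ is trivial. That settles the first assertion.

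For the maximality claim, I would take an arbitrary $\gamma \in \rho\overline{\mu}$, so $\gamma \vee \mu = \rho \vee \mu$ by definition of $\overline{\mu}$; then $\gamma \subseteq \gamma \vee \mu = \rho \vee \mu$, showing that $\rho \vee \mu$ dominates every member of its $\overline{\mu}$-class. Since $\rho \vee \mu$ itself lies in that class (as shown above), it is the largest element.

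I do not expect any genuine obstacle here: the argument is purely formal and hinges only on the preceding theorem (to secure that $\rho \vee \mu$ is a semilattice congruence) and the defining property of $\mu_\rho$. The only point worth stating explicitly is the identification of $[\mu, A\times A]$ with the set of semilattice congruences, which makes $\rho \vee \mu$ a legitimate candidate to bound $\mu_\rho$ from above and collapses the entire proof to one line of lattice manipulation.
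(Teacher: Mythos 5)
Your proof is correct and follows exactly the route the paper intends: the paper omits the proof of this proposition, relying on the verbatim analogue of the argument given for $\pi_{\rho}$ and $\bar{\sigma}$ just before, which is precisely what you reproduce (using that $\rho\vee\mu\in[\mu,A\times A]$ is a semilattice congruence, hence an upper bound for $\mu_{\rho}$, and then joining with $\mu$). One could even note that your chain collapses to $\mu_{\rho}=\rho\vee\mu$, since any semilattice congruence contains $\mu$ and hence any semilattice congruence containing $\rho$ contains $\rho\vee\mu$.
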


Also, let $a,b\in A$ and $\rho\in\mathcal{C}(A)$. If $(a,b)\in\mu$, then clearly $(a\rho)\mu(b\rho)$ in \nolinebreak $S/\rho$. If in addition, $\rho\subseteq\mu$, then $(a\rho)\mu(b\rho)$ in $S/\rho$ implies that $(a,b)\in\mu$, since $\mu$ is idempotent-separating.~It follows that $A/\mu\cong (A/\rho)/\mu$, that is, $A$ and $A/\rho$ have isomorphic minimal idempotent-separating homomorphic images.~We may say that $\rho$ \emph{preserves the minimal idempotent-separating homomorphic images}.~Since for all $\rho\in\mathcal{C}(A)$, $\rho_\kappa\subseteq\rho$, then we have the following factorization:
$$
A\to A/\rho_\kappa\to A/\rho \cong (A/\rho_\kappa)/(\rho/\rho_\kappa).
$$

We get the following proposition.

\begin{proposition} Every homomorphism of completely inverse $AG^{**}$-group\-oids can be factored into a homomorphism preserving the minimal idempotent-separa\-ting homomorphic images and an idempotent pure homomorphism.
\end{proposition}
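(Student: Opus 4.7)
The plan is to mirror the construction given in the paragraph immediately preceding the proposition, which is itself dual to the previous factorization result. Given a homomorphism $\psi: A \to B$ of completely inverse $AG^{**}$-groupoids, let $\rho=\{(a,b)\in A\times A: a\psi=b\psi\}$ be its kernel congruence on $A$ and set $\rho_\kappa=\rho\cap\mu$. The inclusion $\rho_\kappa\subseteq\rho$ yields the canonical factorization
$$A \xrightarrow{\psi_1} A/\rho_\kappa \xrightarrow{\psi_2} A/\rho \cong (A/\rho_\kappa)/(\rho/\rho_\kappa),$$
which, after composition with the natural embedding $A/\rho\hookrightarrow B$, decomposes $\psi$.

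For the first factor, I would verify that $\psi_1$ preserves the minimal idempotent-separating homomorphic image. Since $\rho_\kappa\subseteq\mu$ by definition, this is immediate from the general principle recorded in the passage preceding the proposition: any congruence contained in $\mu$ induces the isomorphism $A/\mu\cong(A/\rho_\kappa)/\mu_{A/\rho_\kappa}$. No further argument is needed here.

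For the second factor, I would show that the congruence $\rho/\rho_\kappa$ on $A/\rho_\kappa$ is idempotent pure; this is precisely the statement that $\psi_2$ is an idempotent pure homomorphism. Suppose $(a\rho_\kappa,\bar{e})\in\rho/\rho_\kappa$ with $\bar{e}\in E_{A/\rho_\kappa}$. Since $A/\rho_\kappa$ is itself a completely inverse $AG^{**}$-groupoid, it is idempotent-surjective by Theorem~\ref{i-s}, so $\bar{e}$ contains some $f\in E_A$. Then $(a,f)\in\rho$, giving $a\in\ker(\rho)$. The essential input is the equality $\ker(\rho)=\ker(\rho_\kappa)$, noted in the discussion preceding Theorem~\ref{main kernel} (and valid because $\mu$ is a semilattice congruence). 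Consequently $a\in\ker(\rho_\kappa)$, so $a\rho_\kappa$ contains an idempotent of $A$ and is therefore an idempotent of $A/\rho_\kappa$.

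The only genuine content in the proof is the identity $\ker(\rho)=\ker(\rho_\kappa)$; once this has been invoked, the two verifications become essentially bookkeeping. Thus I expect no substantial obstacle, and the whole argument is entirely parallel to the corresponding proof of Proposition III.5.10 in \cite{Pet}.
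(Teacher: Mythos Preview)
Your proposal is correct and follows essentially the same route as the paper's proof: factor through $A/\rho_\kappa$, observe $\rho_\kappa\subseteq\mu$ for the first factor, and invoke $\ker(\rho)=\ker(\rho_\kappa)$ to see that $\rho/\rho_\kappa$ is idempotent pure for the second. One small slip of phrasing: the fact that the idempotent class $\bar e$ contains some $f\in E_A$ comes from the idempotent-surjectivity of $A$ (Theorem~\ref{i-s} applied to the congruence $\rho_\kappa$ on $A$), not from the idempotent-surjectivity of $A/\rho_\kappa$; the conclusion and the rest of the argument are unaffected.
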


\begin{proof}Let $\rho$ be a congruence on a completely inverse $AG^{**}$-groupoid $A$. Then obviously $\rho_\kappa\subseteq\mu$, and hence the canonical homomorphism of $A$ onto $A/\rho_\kappa$ preserves the minimal idempotent-separating homomorphic images. Also, the mapping $a\rho_\kappa\to a\rho$  $(a\in A)$ is an idempotent pure homomorphism of $A/\rho_\kappa$ onto $A/\rho$, since $\text{ker}(\rho)=\text{ker}(\rho_\kappa)$.~The thesis of the proposition follows now from the above factorization.
\end{proof}

Since $\mu$ is also the least semilattice congruence on $A$ (Theorem \ref{mu}), then we may replace in the above proposition the words "minimal idempotent-separating" by the words "maximal semilattice".

Once again we prove some equivalent conditions for $A$ to be $E$-unitary.

\begin{theorem} Let $A$ be a completely inverse $AG^{**}$-groupoid.~The following conditions are equivalent$:$
\par {\indent\rm$(a)$} \ $A$ is $E$-unitary$;$
\par {\indent\rm$(b)$} \ $\sigma=\tau;$
\par {\indent\rm$(c)$} \ every idempotent pure congruence on $A$ is $E$-unitary$;$
\par {\indent\rm$(d)$} \ there exists an idempotent pure $E$-unitary congruence on $A;$
\par {\indent\rm$(e)$} \ $\tau$ is $E$-unitary.
\end{theorem}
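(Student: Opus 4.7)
The plan is to dispose of $(a)\Leftrightarrow (b)$ directly from material already developed, and then close the remaining implications via the chain $(a)\Rightarrow (c)\Rightarrow (e)\Rightarrow (d)\Rightarrow (a)$.

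For $(a)\Leftrightarrow (b)$, I would first record two facts: $\tau$ is by construction the largest idempotent pure congruence on $A$, while Section $4$ already established that every idempotent pure congruence is contained in $\sigma$; hence $\tau\subseteq\sigma$ in general.~Consequently, the equality $\sigma=\tau$ amounts to saying that $\sigma$ itself is idempotent pure, which is precisely condition $(c)$ of Theorem \ref{E-unitary2}, i.e.~condition $(a)$ here.

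For $(a)\Rightarrow (c)$, I would take $A$ to be $E$-unitary, $\rho$ any idempotent pure congruence, and verify $A/\rho$ is $E$-unitary using Proposition \ref{E-unitary}: if $e\in E_A$ and $a\in A$ satisfy $(ea)\rho\in E_{A/\rho}$, then by idempotent-surjectivity (Theorem \ref{i-s}) we may write $(ea)\rho=f\rho$ for some $f\in E_A$; idempotent-purity of $\rho$ forces $ea\in E_A$, and $E$-unitariness of $A$ then gives $a\in E_A$.~The implication $(c)\Rightarrow (e)$ is immediate since $\tau$ is idempotent pure, and $(e)\Rightarrow (d)$ is trivial.

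The crux is $(d)\Rightarrow (a)$, and this is where I expect to apply the kernel-based machinery from the present section.~Given an idempotent pure $E$-unitary congruence $\rho$, idempotent-purity gives $\ker(\rho)=E_A$, while Proposition \ref{U(A)} translates $E$-unitariness of $\rho$ into closure of $\ker(\rho)$. Combining, $E_A\omega=E_A$, and Theorem \ref{sigma}, which identifies $\ker(\sigma)=E_A\omega$, yields $\ker(\sigma)=E_A$.~Hence $\sigma$ is idempotent pure, and Theorem \ref{E-unitary2} concludes that $A$ is $E$-unitary. I do not anticipate any serious obstacle: the real work sits in Proposition \ref{U(A)} and Theorem \ref{sigma}, and the present theorem is essentially an orchestration of those two results together with the equivalence already supplied by Theorem \ref{E-unitary2}.
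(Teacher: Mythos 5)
Your proposal is correct and follows essentially the same route as the paper: both treat $(a)\Leftrightarrow(b)$ separately and close the remaining conditions in a cycle, relying on the same toolbox (Theorem \ref{E-unitary2}, Proposition \ref{U(A)}, Theorem \ref{sigma}, Proposition \ref{E-unitary} and idempotent-surjectivity). The only differences are cosmetic — you traverse the cycle as $(c)\Rightarrow(e)\Rightarrow(d)\Rightarrow(a)$ instead of the paper's $(c)\Rightarrow(d)\Rightarrow(e)\Rightarrow(a)$, and you derive $\sigma=\tau$ from maximality of $\tau$ among idempotent pure congruences rather than from the identities $\sigma=\tau(\pi)$ and $\pi=\sigma_\kappa=\tau_\kappa$ — and every step you give is sound.
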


\begin{proof} $(a)\implies (b)$. Let $\pi= 1_A$. Then $\sigma=\tau(\pi)=\tau(1_A)=\tau$.

$(b)\implies (a)$. Let $\sigma=\tau$. Then $\pi =\sigma_\kappa=\tau_\kappa=1_A$.

$(a)\implies (c)$. Firstly, $A/\tau$ is $E$-unitary. Indeed, if $(e\tau)(a\tau) = f\tau\in E_{A/\tau}$, where $a \in A$ and $e, f \in E_A$, then $(ea, f) \in\tau$.~Hence $ea \in E_A$.~Thus $a \in E_A$. Secondly, if $\rho\in\mathcal{C}(A)$ is idempotent pure, then $\rho\subseteq\tau$. Consequently, $\rho$ is $E$-unitary.

$(c)\implies (d)$. Obvious.

$(d)\implies (e)$. If $\rho$ is an idempotent pure $E$-unitary congruence on $A$, then we get $\pi\subseteq\rho\subseteq\tau\subseteq\sigma$, so $\tau$ is $E$-unitary (Theorem \ref{ME}).

$(e)\implies (a)$. Let $ea = f$, where $a\in A$ and $e,f\in E_A$. Then $(e\tau)(a\tau) = f\tau$ and so $a\in\text{ker}(\tau) = E_A$. Thus $S$ is $E$-unitary.
\end{proof}

\noindent
\bigbreak
\footnotesize{
Institute of Mathematics and Computer Science,

Wroclaw University of Technology

Wyb. Wyspianskiego 27

50-370 Wroclaw
Poland

e-mail: wieslaw.dudek@pwr.wroc.pl, \ \ romekgigon@tlen.pl}

\end{document}